\def\titlerunning#1{\gdef\titrun{#1}}
\def\author#1{\gdef\autrun{\def\and{\unskip, }#1}\gdef\@author{#1}}
\def\address#1{{\def\and{\\\hspace*{18pt}}\renewcommand{\thefootnote}{}%
		\footnote {#1}}%
	\markboth{\autrun}{\titrun}}
\def\email#1{e-mail: #1}
\def\subjclass#1{{\renewcommand{\thefootnote}{}%
		\footnote{\emph{Mathematics Subject Classification (2020):} #1}}}
\def\keywords#1{\par\medskip
	\noindent\textbf{Keywords.} #1}
\newtheorem{theorem}{Theorem}[section]
\newtheorem{corollary}[theorem]{Corollary}
\newtheorem{lemma}[theorem]{Lemma}
\newtheorem{proposition}[theorem]{Proposition}
\theoremstyle{definition}
\newtheorem{remark}[theorem]{Remark}
\numberwithin{equation}{section}
\def \C {\mathbb{C}}
\def \a {\alpha }
\def \b {\beta}
\def \De {\Delta}
\def \la {\lambda}
\def \La {\Lambda}
\def\Om{\Omega}
\def\na {\nabla}
\begin{document}
	\baselineskip=17pt
	
	\titlerunning{Curvature operator and Euler number}
	\title{Curvature operator and Euler number}
	
	\author{Teng Huang and Qiang Tan}
	
	\date{}
	
	\maketitle
	
	\address{T. Huang: School of Mathematical Sciences, University of Science and Technology of China; CAS Key Laboratory of Wu Wen-Tsun Mathematics, University of Science and Technology of China, Hefei, Anhui, 230026, P. R. China; \email{htmath@ustc.edu.cn;htustc@gmail.com}}
	\address{Q. Tan: School of Mathematical Sciences, Jiangsu University, Zhenjiang, Jiangsu 212013, People’s Republic of China; \email{tanqiang@ujs.edu.cn}}
	\subjclass{53C20;53C21;58A10;58A14}

	\begin{abstract}
	Let $(X,g)$ be a compact $n$-dimensional smooth Riemannian manifold with a lower bound on the average of the lowest $n-p$ eigenvalues of the curvature operator and the diameter of $X$ is bounded above by $D>0$. In this article, we investigate the relationship between the curvature operator and the Euler number of $X$. Our analysis is based on more general vanishing theorems for a Dirac operator associated with a smooth $1$-form on $X$. As a consequence, we obtain partial affirmative answers to Question 4.6 posed by Herrmann, Sebastian, and Tuschmann in \cite{HST}. Specifically, we prove that if a compact $2m$-dimensional manifold admits an almost nonnegative curvature operator (ANCO) and has a nontrivial first de Rham cohomology group, then its Euler number vanishes. Furthermore, in the case where $m=2$, we show that the Euler number is nonnegative. This result provides a complete resolution to their question in the four-dimensional setting.
\end{abstract}
\keywords{curvature operator, Euler number, Dirac operator}

\section{Introduction}
Let $(X,g)$ be a compact $n$-dimensional smooth Riemannian manifold with $n\geq3$. Understanding the relationship between curvature and the topology of a Riemannian manifold is a fundamental theme in Riemannian geometry. Specifically, investigating the connections between Ricci curvature and the topology of a Riemannian manifold is one of the central topics in differential geometry. A classical theorem by Bochner states that if the Ricci curvature $Ric(g)$ of $X$ is nonnegative, then the first Betti number $b_{1}(X)$ of $X$ satisfies $b_{1}(X)\leq n$. Moreover, equality holds if and only if $X$ is isometric to a flat torus.
	
Let $\la_{1}\leq\cdots\leq \la_{\binom{n}{2}}$ denote the eigenvalues of the curvature operator of $(X,g)$. Berger \cite{Ber61} and Meyer \cite{Mey} established vanishing results for the Betti numbers of manifolds with positive curvature operators, i.e., $\la_{1}>0$, and in particular Meyer showed that they are rational (co)homology spheres. A smooth compact  manifold $X$ is said to admit almost nonnegative curvature operator (ANCO) if there exists a sequence of Riemannian metrics $\{g_{i}\}$ on $X$ such that all eigenvalues
	$\la_{i}=\la(g_{i})$ of the associated curvature operator and the diameter ${\rm{diam}}(g_{i})$ of $g_{i}$ satisfy
	$$\la_{i}\geq -\frac{1}{i},\ {\rm{diam}}(g_{i})\leq 1.$$
	This definition was introduced by Herrmann-Sebastian-Tuschmann in \cite{HST}.	  In \cite{HST}, the authors proposed the following question:\\
	\textbf{Question 4.6} Do manifolds with almost nonnegative curvature operator have nonnegative Euler number?
	
	Fix constants $\kappa< 0$, $D>0$, $\La>0$ and $1\leq p\leq\lfloor\frac{n}{2}\rfloor$.  Suppose that
	$$\frac{\la_{1}+\cdots+\la_{n-p}}{n-p}\geq\kappa,\ {\rm{diam}}(X)\leq D,\ and\ -\kappa D^{2}=\La^{2}.$$
	Peter and Wink \cite{PW} established a more general vanishing and estimation theorem for the $p$-th Betti number of compact $n$-dimensional Riemannian manifolds with $\la_{1}+\cdots+\la_{n-p}\geq0$. As a consequence, they proved that manifolds with  $\lceil\frac{n}{2}\rceil$-positive curvature operators are rational homology spheres.  In \cite{Yu}, the author studied the upper bound of the Betti numbers of a compact Riemannian manifold given integral bounds on the average of the lowest eigenvalues of the curvature operator. 
	
	Throughout our article, we denote by $\mathfrak{X}(p,\kappa,D,\La)$  the class of compact Riemannian $n$-manifolds $X$, where
	\begin{itemize}
		\item the lower bound on the average of the lowest $n-p$ eigenvalues of the curvature operator is greater than or equal to $\kappa$;
		\item  the diameter satisfies $D(g)\leq D$; 
		\item  $-\kappa D^{2}=\La^{2}$.
	\end{itemize}
Every compact smooth Riemannian manifold belong to $\mathfrak{X}(p,\kappa,D,\La)$ for some $\kappa$ and $D$.

Based on work of Li \cite{Li}, Petersen and Wink \cite{PW} generalized the Bochner technique and obtained estimation results for the Betti numbers of the compact Riemannian manifold $X\in\mathfrak{X}(p,\kappa,D,\La)$. The case of $\la_{1}\geq\kappa$ was also proved by Gallot \cite{Gal}. In \cite{PW}, the authors proved that for sufficiently small $\La$, the Betti numbers of  $X\in\mathfrak{X}(p,\kappa,D,\La)$ satisfy $b_{k}(X)\leq\binom{n}{k}$ for all $k\leq p$ . Gromov \cite{Gro81} established similar bounds on the Betti numbers in the context of sectional curvature.

However, these estimates of Betti numbers do not provide information about the Euler number $\chi(X)$ of $X$. This leads us to a natural question:\\
\textbf{Question} 
	what is the Euler number of the $2n$-dimensional compact Riemannian manifold $X\in\mathfrak{X}(p,\kappa,D,\La)$ when $\La$ is sufficiently small? 
	In particular, does the manifold $X\in\mathfrak{X}(1,\kappa,D,\La)$ with $\La$ small enough necessarily  have $\chi(X)\geq0$?
	
This question generalizes the Question 4.6 proposed by Herrmann-Sebastian-Tuschmann. Our first mail result in this article establishes a connection between the eigenvalues of the curvature operator and the Euler number of $X$.
\begin{theorem}\label{C7}
Let $(X,g)$ be a compact $2n$-dimensional smooth Riemannian manifold with nonzero first de Rham cohomology group. There exists a uniform positive constant $C(n)$ such that if $X\in\mathfrak{X}(p,\kappa,D,\La)$ satisfies 
$$\La\leq C(n),$$
then the Euler number of $X$ satisfies\\
(1) $\chi(X)=0$, when $p=n$;\\
(2) $(-1)^{n}\chi(X)\geq0$, when $p=n-1$.
\end{theorem}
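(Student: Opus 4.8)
The plan is to realise $\chi(X)$ as the index of a Dirac-type operator on differential forms deformed by a harmonic $1$-form, and then to run a quantitative Bochner argument --- in the spirit of Gallot, Li and Petersen--Wink --- that annihilates the kernel of this operator in every degree controlled by the eigenvalue hypothesis. Fix a metric with $X\in\mathfrak{X}(p,\kappa,D,\La)$ and $\La\le C(n)$, and use $H^{1}_{dR}(X)\neq 0$ to choose a nonzero $g$-harmonic $1$-form $\omega$. On $\Lambda^{*}_{\C}T^{*}X=\Lambda^{\mathrm{ev}}\oplus\Lambda^{\mathrm{odd}}$, deform the Gauss--Bonnet operator $d+d^{*}$ by the parity-reversing, self-adjoint, zeroth-order term $\hat c(\omega)=\omega\wedge{}+\iota_{\omega^{\sharp}}$, built from the Clifford action that anticommutes with the one generating $d+d^{*}$; set $D_{\omega}=(d+d^{*})+\hat c(\omega)$. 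Then $D_{\omega}$ is a self-adjoint elliptic operator of Dirac type, and since $\hat c(\omega)$ is of order zero, $\operatorname{ind}\bigl(D_{\omega}\colon\Omega^{\mathrm{ev}}\to\Omega^{\mathrm{odd}}\bigr)=\operatorname{ind}(d+d^{*})=\chi(X)$.

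Next I would write the Bochner--Weitzenb\"ock identity $D_{\omega}^{2}=\nabla^{*}\nabla+\mathcal{W}+|\omega|^{2}+\mathcal{E}_{\omega}$, where $\mathcal{W}$ is the usual degree-preserving Weitzenb\"ock curvature operator on forms, $|\omega|^{2}\ge 0$ is the Clifford term, and $\mathcal{E}_{\omega}=\sum_{i}c(e_{i})\hat c(\nabla_{e_{i}}\omega)$ is a zeroth-order term that is algebraic in $\nabla\omega$ and vanishes when $\omega$ is parallel. The point of the eigenvalue hypothesis is the Petersen--Wink linear algebra: on $\Lambda^{k}T^{*}X$, the operator $\mathcal{W}$ is bounded below by a dimensional multiple of $\lambda_{1}+\cdots+\lambda_{2n-k}$, and, because the average of the lowest $2n-p$ eigenvalues being $\ge\kappa$ forces $\lambda_{2n-p}\ge\kappa$, one gets $\mathcal{W}|_{\Lambda^{k}}\ge -C(n)\La^{2}/D^{2}$ for every $k$ with $\min\{k,2n-k\}\le p$; that is, for all $k$ when $p=n$ and for all $k\neq n$ when $p=n-1$. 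Also, since $\omega$ is harmonic the Bochner formula for $1$-forms gives $\|\nabla\omega\|_{L^{2}}^{2}\le -\kappa\,\|\omega\|_{L^{2}}^{2}=(\La^{2}/D^{2})\|\omega\|_{L^{2}}^{2}$, so $\mathcal{E}_{\omega}$ is small in $L^{2}$.

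The analytic heart is to show that, when $\La\le C(n)$, the quadratic form $\phi\mapsto\langle D_{\omega}^{2}\phi,\phi\rangle_{L^{2}}$ is bounded below by $c\sum_{\min\{k,2n-k\}\le p}\|\phi_{k}\|_{L^{2}}^{2}$ for some uniform $c>0$. Here one integrates the Weitzenb\"ock identity, uses $\|\nabla\phi\|_{L^{2}}^{2}+\int_{X}|\omega|^{2}|\phi|^{2}$ as the main positive quantity, and absorbs both the (possibly negative) curvature term $\mathcal{W}$ and the error $\mathcal{E}_{\omega}$ by Gallot's Sobolev/Poincar\'e inequality, whose constant depends only on $n$, $\kappa$ and $D$ --- the smallness $\La\le C(n)$ being precisely what makes the error terms lose against the main term. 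It then follows that $\ker D_{\omega}=\ker D_{\omega}^{2}$ is contained in $\bigoplus_{p<k<2n-p}C^{\infty}(\Lambda^{k}T^{*}X)$.

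Finally I would read off the topology. If $p=n$ this direct sum is empty, so $D_{\omega}$ is invertible and $\chi(X)=\operatorname{ind}D_{\omega}=0$, which is (1). If $p=n-1$ the kernel is concentrated in the single middle degree, $\ker D_{\omega}\subseteq C^{\infty}(\Lambda^{n}T^{*}X)$, which has pure parity $(-1)^{n}$; hence $\ker D_{\omega}\subseteq\Omega^{\mathrm{ev}}$ when $n$ is even and $\ker D_{\omega}\subseteq\Omega^{\mathrm{odd}}$ when $n$ is odd. In the first case $\operatorname{coker}(D_{\omega}|_{\Omega^{\mathrm{ev}}})=\ker(D_{\omega}|_{\Omega^{\mathrm{odd}}})=0$, so $\chi(X)\ge 0$; in the second $\ker(D_{\omega}|_{\Omega^{\mathrm{ev}}})=0$, so $\chi(X)\le 0$; either way $(-1)^{n}\chi(X)\ge 0$, which is (2). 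I expect the main obstacle to be the estimate in the third paragraph: the error $\mathcal{E}_{\omega}$ mixes degrees and is only small in $L^{2}$, while $\mathcal{W}$ may be negative where $|\omega|$ is small, so making $\langle D_{\omega}^{2}\phi,\phi\rangle$ strictly positive uniformly over the whole class $\mathfrak{X}(p,\kappa,D,\La)$ --- with one dimensional constant $C(n)$ serving every metric --- requires both the sharp Petersen--Wink bounds on each $\Lambda^{k}$ and careful control of test forms near the (measure-zero) zero set of $\omega$.
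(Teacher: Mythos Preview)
Your overall architecture coincides with the paper's: the paper also uses the harmonic $1$-form $\theta$, the deformed Gauss--Bonnet operator $\mathcal{D}_{\theta}=(d+d^{*})+(\theta\wedge+\iota_{\theta^{\sharp}})$ with $\operatorname{ind}\mathcal{D}_{\theta}=\chi(X)$, the Petersen--Wink lower bound on the Weitzenb\"ock term in each degree $k$ with $\min\{k,2n-k\}\le p$, and a Bochner-plus-Sobolev argument to kill the kernel in those degrees. Since $\theta$ is closed one has $d_{\theta}^{2}=0$, so $\mathcal{D}_{\theta}^{2}=\Delta_{\theta}$ is \emph{degree-preserving} and $\ker\mathcal{D}_{\theta}$ splits as $\bigoplus_{k}\mathcal{H}^{k}_{\theta}(X)$; the paper phrases the vanishing as $H^{k}(X,t\theta)=0$ for $k\le p$ (Morse--Novikov cohomology) and then invokes duality, but this is exactly your statement that the kernel lives only in the middle degree(s). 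In particular your worry that ``$\mathcal{E}_{\omega}$ mixes degrees'' is unfounded here: the off-diagonal pieces of $\sum_{i}c(e_{i})\hat c(\nabla_{e_{i}}\omega)$ are $d\omega\wedge$ and its adjoint, which vanish for harmonic $\omega$.

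Where your sketch has a genuine gap is the positivity estimate in your third paragraph. As written, nothing fixes the size of $\omega$, so the ``main positive quantity'' $\int_{X}|\omega|^{2}|\phi|^{2}$ carries no scale and cannot by itself beat the curvature term $\kappa\,k(2n-k)\|\phi\|^{2}$; taking $\La$ small does not help if $\omega$ is smaller still. The paper's fix is to work with $t\theta$ and \emph{choose} $t$ so that the average $\tfrac{1}{\mathrm{Vol}}\int_{X}t^{2}|\theta|^{2}$ equals a specific dimensional multiple of $-\kappa$; only with this coupling of $t$ to $\kappa$ does the balance close. Two further ingredients you will need, which the paper supplies, are: (i) an integral identity coming from the Lie derivative, $\int_{X}|\theta|^{2}|\alpha|^{2}\le C(n)\int_{X}|\nabla\theta|\,|\alpha|^{2}$ for $\alpha\in\ker\mathcal{D}_{\theta}$, which is what ultimately ties the $|\omega|^{2}$-weighted norm to $\nabla\omega$ and hence to $\kappa$; and (ii) a Moser iteration (not just a one-step Sobolev inequality) to pass from $\|\alpha\|_{L^{2}}$ to $\|\alpha\|_{L^{4}}$ with constants controlled uniformly on $\mathfrak{X}(p,\kappa,D,\La)$, since the error terms are estimated through $\|\nabla\theta\|_{L^{2}}\|\alpha\|_{L^{4}}^{2}$. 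With those three pieces in place your outline becomes the paper's proof; without the scaling $t$ the argument does not close.
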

\begin{remark}
The assumption of a nontrivial first cohomology group in Theorem \ref{C7} is essential. For example, let $(X,g)$ be a compact $2n$-dimensional Riemannian manifold with $n$-positive curvature operator. According to \cite[Theorem A]{PW}, the Betti number $b_{p}(X)=0$ for $0<p<2n$, so the Euler number of $X$ satisfies $\chi(X)=2$.  However, such $X$ is in $\mathfrak{X}(n,0,D,0)$.
\end{remark}
While our results provide only partial affirmative answers to Question 4.6 proposed by Herrmann-Sebastian-Tuschmann in general dimensions, we obtain a complete resolution in the $4$-dimensional case.
\begin{corollary}\label{C1}
	Let $(X,g)$ be a compact $4$-dimensional smooth Riemannian manifold. There exists a uniform positive constant $C$ such that if $X\in\mathfrak{X}(1,\kappa,D,\La)$ satisfies
	$$\La\leq C,$$
	then the Euler number of $X$ satisfies $\chi(X)\geq0$. 
\end{corollary}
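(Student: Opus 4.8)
The plan is to reduce the corollary to Theorem \ref{C7}. In dimension four we have $2n=4$, hence $n=2$, and the hypothesis $X\in\mathfrak{X}(1,\kappa,D,\La)$ is exactly the case $p=1=n-1$ — the regime covered by Theorem \ref{C7}(2), provided we can feed that theorem a manifold with non-vanishing first de Rham cohomology. So I would argue by a dichotomy on $b_{1}(X)$. If $b_{1}(X)>0$, then $H^{1}_{dR}(X)\neq 0$ and Theorem \ref{C7}(2) applies with $n=2$: for $\La\leq C(2)$ it yields $(-1)^{2}\chi(X)=\chi(X)\geq 0$, which is the claim with $C=C(2)$ in this branch.

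It remains to treat $b_{1}(X)=0$, where the curvature hypothesis is no longer used in an essential way and one falls back on the elementary topology of $4$-manifolds. If $X$ is orientable, Poincaré duality gives $b_{4}(X)=b_{0}(X)=1$ and $b_{3}(X)=b_{1}(X)=0$, so $\chi(X)=2-2b_{1}(X)+b_{2}(X)=2+b_{2}(X)\geq 2$ with no constraint on $\La$. If $X$ is non-orientable, I would pass to the orientation double cover $\pi\colon\tilde X\to X$: then $\tilde X$ is an orientable closed $4$-manifold with $\chi(\tilde X)=2\chi(X)$; the bound on the average of the lowest $n-p$ eigenvalues of the curvature operator is pointwise, hence preserved by the local isometry $\pi$, so $\tilde X$ satisfies it with the same $\kappa$; and $\mathrm{diam}(\tilde X)\leq 3\,\mathrm{diam}(X)\leq 3D$. (The last bound is standard: a shortest loop in $X$ with non-trivial monodromy has length at most $2\,\mathrm{diam}(X)$ by a midpoint-splitting argument, so the two points of any fibre of $\pi$ lie at distance at most $2D$ in $\tilde X$, and any two points of $\tilde X$ are then joined through such a fibre pair in length at most $D+2D$.) Consequently $-\kappa\,\mathrm{diam}(\tilde X)^{2}\leq 9\La^{2}$, i.e. $\tilde X\in\mathfrak{X}(1,\kappa,3D,3\La)$. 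Applying the orientable discussion to $\tilde X$: if $b_{1}(\tilde X)=0$ then $\chi(\tilde X)=2+b_{2}(\tilde X)\geq 2$, while if $b_{1}(\tilde X)>0$ then Theorem \ref{C7}(2) applies to $\tilde X$ whenever $3\La\leq C(2)$, giving $\chi(\tilde X)\geq 0$; either way $\chi(X)=\tfrac{1}{2}\chi(\tilde X)\geq 0$. Taking $C=C(2)/3$ then covers every case.

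No step here is genuinely hard: all the analytic content is already inside Theorem \ref{C7}, whose four-dimensional instance ($n=2$, $p=n-1$) is precisely what is needed, and the $b_{1}=0$ branch is purely combinatorial-topological. I expect the only mildly delicate point to be the non-orientable reduction — checking that the orientation double cover still lies in $\mathfrak{X}(1,\kappa,\cdot,\cdot)$ and that its diameter, and hence its $\La$, is inflated by only a universal factor, which is the place where the final constant $C$ is shrunk relative to $C(2)$.
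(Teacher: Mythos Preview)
Your proposal is correct and follows the same route as the paper: split on whether $b_{1}(X)=0$ or $b_{1}(X)>0$, handling the latter via Theorem~\ref{C7}(2) (equivalently Corollary~\ref{T5}) and the former by the elementary identity $\chi(X)=2+b_{2}(X)-2b_{1}(X)\geq 2$. Your extra reduction of the non-orientable case to the orientable one through the orientation double cover is a point the paper's proof does not address---it tacitly uses orientability when writing $\chi(X)=2+b_{2}(X)-2b_{1}(X)$---so your version is, if anything, slightly more complete.
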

\begin{remark}
Actually, we observe that when dealing with 1-forms, we only need to use the properties of Ricci curvature. In fact, if the Ricci curvature of a compact manifold $X$ satisfies $$Ric(g)D^{2}(g)\geq -C,$$
where $C$ is a uniform positive constant, then there exists some $t\in\mathbb{R}$, $t\neq 0$ such that $H^{1}(X,t\theta)=0$. By a similar argument as in Corollary \ref{C1}, we can also prove that $\chi(X)\geq0$ when $\dim X=4$.
\end{remark}
For a smooth Riemannian manifold $(X,g)$, let ${\rm{Iso}}(X,g)$ denotes the group of Riemannian isometries $f:(X,g)\rightarrow(X,g)$. The Lie algebra of ${\rm{Iso}}(X,g)$ is spanned by the Killing vectors of $(X,g)$. A classical theorem of Bochner asserts that the isometry group of a compact Riemannian manifold with negative Ricci curvature is finite (see \cite{Boc}).  Several authors have also attempted to estimate the order of ${\rm{Iso}}(X,g)$  under the assumption of negative Ricci curvature (see \cite{DSW,Kat,KK}). 

We denote by $\theta^{\sharp}$ the Killing vector field on $X$, and the dual $1$-form $\theta$ with respect to $\theta^{\sharp}$ is always non-closed. In the second part of our article,  we will focus on the study of Killing vector fields on a compact $n$-dimensional Riemannian manifold with Ricci curvature that is bounded from above. 
\begin{theorem}\label{T6}
Let $(X,g)$ be a compact $n$-dimensional smooth Riemannian manifold with nonzero Killing vector field $\theta^{\sharp}$. There exists a positive constant $C(n,\La)$ such that if $X\in\mathfrak{X}(\lfloor\frac{n}{2}\rfloor,\kappa,D,\La)$ satisfies 
	$$Ric(g)D^{2}(g)\leq C(n,\La),$$
	then the Euler number vanishes, i.e., $\chi(X)=0$.
	\end{theorem}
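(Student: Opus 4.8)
The plan is to combine the Bochner identity for the Killing vector field with the refined Bochner-technique estimates of Gallot and Petersen--Wink that underlie the paper's use of the curvature operator, in order to produce a genuine \emph{parallel} $1$-form on $X$, and then to finish with the Poincar\'e--Hopf theorem; the Dirac operator of the paper reappears only as an equivalent reformulation of the last step, where the Weitzenb\"ock formula closes up without error terms. Let $\theta$ be the $1$-form metrically dual to $\theta^{\sharp}$. Since $\theta^{\sharp}$ is Killing, $\theta$ is co-closed and satisfies the classical identity $\nabla^{*}\nabla\theta=\widehat{Ric}(\theta)$, where $\widehat{Ric}$ is the Ricci endomorphism; pairing with $\theta$ and using $Ric(g)D^{2}(g)\leq C(n,\La)$ gives
$$\|\nabla\theta\|_{L^{2}}^{2}=\int_{X}Ric(\theta^{\sharp},\theta^{\sharp})\,dV_{g}\ \leq\ \frac{C(n,\La)}{D^{2}}\,\|\theta\|_{L^{2}}^{2},$$
so $\theta$ is a nonzero co-closed $1$-form with small covariant derivative. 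It is worth recording that $X\in\mathfrak{X}(\lfloor\tfrac{n}{2}\rfloor,\kappa,D,\La)$ already forces two-sided curvature control: by the trace inequality $Ric(v)\geq\la_{1}+\cdots+\la_{n-1}\geq(n-1)\kappa$ for every unit vector $v$ one gets $-\tfrac{(n-1)\La^{2}}{D^{2}}\leq Ric\leq\tfrac{C(n,\La)}{D^{2}}$, and likewise all Weitzenb\"ock curvature terms $\mathfrak{R}_{k}$ are bounded below by $-\tfrac{c(n)\La^{2}}{D^{2}}$.

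Now comes the spectral step, where the curvature operator is essential. By the refined Bochner technique of Li and Petersen--Wink (on $1$-forms the bound on $Ric$ and on the diameter govern the exact part, while the curvature operator on $2$-forms governs the co-exact part), there is a constant $c(n,\La)>0$ such that every $1$-form $\a$ with $\|\nabla\a\|_{L^{2}}^{2}<\tfrac{c(n,\La)}{D^{2}}\|\a\|_{L^{2}}^{2}$ is $L^{2}$-close to the space $\mathcal{P}^{1}$ of parallel $1$-forms, say $\|\a-\Pi\a\|_{L^{2}}^{2}\leq\tfrac12\|\a\|_{L^{2}}^{2}$ with $\Pi$ the $L^{2}$-projection onto $\mathcal{P}^{1}$. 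Choosing $C(n,\La)$ small enough that $C(n,\La)<c(n,\La)$, the form $\theta$ satisfies this hypothesis, so $\Pi\theta\neq0$: there exists a nonzero parallel $1$-form $\theta_{\mathcal{P}}$ on $X$.

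A nonzero parallel $1$-form has constant, nonvanishing pointwise norm, so its dual vector field $\theta_{\mathcal{P}}^{\sharp}$ is nowhere zero and $\chi(X)=0$ by the Poincar\'e--Hopf theorem. Equivalently, in the paper's framework: for $\theta_{\mathcal{P}}$ parallel the Weitzenb\"ock formula for the deformed Dirac operator $\mathcal{D}_{t}:=(d+t\,\theta_{\mathcal{P}}\wedge)+(d^{*}+t\,\iota_{\theta_{\mathcal{P}}^{\sharp}})$ reduces to $\mathcal{D}_{t}^{2}=\nabla^{*}\nabla+\mathfrak{R}+t^{2}|\theta_{\mathcal{P}}|^{2}$ (the first-order term $t(d\theta_{\mathcal{P}}\wedge+\iota_{d\theta_{\mathcal{P}}})$ vanishes because $\nabla\theta_{\mathcal{P}}=0$), which is a positive operator once $t^{2}|\theta_{\mathcal{P}}|^{2}>\tfrac{c(n)\La^{2}}{D^{2}}$; hence $\ker\mathcal{D}_{t}=0$ for such $t$, and since $\mathrm{ind}(\mathcal{D}_{t}|_{\mathrm{even}})=\chi(X)$ is independent of $t$, $\chi(X)=0$.

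The main obstacle is the spectral step: one must produce the uniform threshold $c(n,\La)$ --- below which an almost-parallel $1$-form is genuinely dominated by parallel $1$-forms --- from only the lower bound on the \emph{average} of the lowest $\lceil n/2\rceil$ curvature operator eigenvalues together with the diameter bound, which is precisely the Moser-iteration content of \cite{Li,PW}; without some curvature operator input the connection Laplacian on $1$-forms can have arbitrarily small positive eigenvalues and the argument collapses. A subsidiary point is that, $\theta$ being co-closed but not closed, one cannot run the twisted de Rham cohomology argument of Theorem \ref{C7} verbatim and must instead work with the connection Laplacian on co-closed $1$-forms (equivalently the deformed Dirac operator), the Weitzenb\"ock computation becoming clean only after passing to $\theta_{\mathcal{P}}$.
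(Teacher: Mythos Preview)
Your argument hinges on the ``spectral step,'' where you assert that under the hypotheses there is a uniform lower bound $c(n,\La)/D^{2}$ on the first nonzero eigenvalue of the rough Laplacian $\nabla^{*}\nabla$ on $1$-forms, so that an almost-parallel $1$-form must have a nonzero parallel component. This is a genuine gap: neither Li's paper (Sobolev constant and scalar Laplacian eigenvalues) nor Petersen--Wink (pointwise control of \emph{harmonic} forms via the Weitzenb\"ock term) gives such a bound, and Moser iteration produces $L^{\infty}$-to-$L^{2}$ comparisons for subsolutions, not eigenvalue gaps for $\nabla^{*}\nabla$. Even if you could show $\theta$ is $L^{2}$-close to a harmonic $1$-form, harmonic $1$-forms under only a lower Ricci bound satisfy merely $\|\nabla h\|^{2}\leq (n-1)|\kappa|\,\|h\|^{2}$ and need not be close to parallel. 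On a manifold with $b_{1}(X)=0$ there are no parallel $1$-forms whatsoever, so your scheme would have to force $\theta=0$; nothing in the hypotheses (with $\La$ fixed, not small) excludes such manifolds a priori. Your own final paragraph essentially concedes that this step is unproved.

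The paper proceeds quite differently and never produces a parallel form. It works directly with $\mathcal{D}_{t\theta}$ for the \emph{non-closed} $1$-form $\theta$ dual to the Killing field. The key input, besides your correct observation $\|\nabla\theta^{\sharp}\|_{L^{2}}^{2}\leq a^{2}\|\theta^{\sharp}\|_{L^{2}}^{2}$, is the integral identity of Theorem~\ref{T3},
\[
\int_{X}|\theta^{\sharp}|^{2}|\a|^{2}\leq C_{1}(n)\int_{X}|\nabla\theta^{\sharp}|\,|\a|^{2}\qquad(\a\in\ker\mathcal{D}_{t\theta}),
\]
combined with the Moser-iteration $L^{2\nu^{N}}$ bound on $|\a|$ (Theorem~\ref{T8}) and the $C^{0}$ bound on $|\theta^{\sharp}|$. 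After choosing $t$ so that $\tfrac{1}{{\rm Vol}(g)}\int t^{2}|\theta^{\sharp}|^{2}=-\kappa$, the two estimates (\ref{E18}) and (\ref{E19}) of Lemma~\ref{L3} give, for $aD$ small depending only on $n$ and $\La$, the inequality $\bar f\,\|\a\|_{L^{2}}^{2}\leq\tfrac{2}{3}\,\bar f\,\|\a\|_{L^{2}}^{2}$, hence $\a=0$. Thus $\ker\mathcal{D}_{t\theta}=0$ on both $\Om^{\pm}(X)$, and since $\mathrm{Index}(\mathcal{D}_{t\theta})=\chi(X)$ regardless of whether $\theta$ is closed, $\chi(X)=0$. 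The point is that the smallness of $\|\nabla\theta^{\sharp}\|$ is fed directly into the integral formula rather than into a spectral-gap argument.
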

\begin{remark}
In \cite{CH}, the authors studied certain Dirac operators on a compact $n$-dimensional Riemannian manifold under the assumption that $X$ has either nonzero Euler characteristic or nonzero signature. They proved that for given positive numbers $2p>n$, $\la_{1}$ and $\la_{2}$ if there exists a metric $g$ on $X$ satisfying
$$-\la_{1}\leq Ric(g)\leq\varepsilon,\ diam(g)\leq1, \frac{1}{{\rm{Vol}}(g)}\int_{X}|\mathfrak{R}|^{p}\leq\la_{2}$$
where  $\varepsilon=\varepsilon(p,n,\la_{1},\la_{2})$ is a positive constant, then the isometry group of Riemannian metric $g$ on $X$ is finite. In our conditions stated in Theorem \ref{T6}, the constant $\la_{1}$ obeys $\la_{1}=-(n-1)\kappa$. However, it is not possible to provide a uniform upper bound for $\la_{2}$ with respect to $\kappa,D,n$ and $p$. Thus, our results differ somewhat from theirs.
\end{remark}
Recall that the curvature operator of a smooth Riemannian manifold $(X,g)$ is called $l$-nonnegative if the sum of its lowest $l$ eigenvalues is nonnegative.  Furthermore, if $(X,g)$ is a compact $2n$-dimensional Riemannian manifold with $n$-positive curvature operator, then according to \cite[Theorem A]{PW}, we have $\chi(X)=2$. We then have
\begin{corollary}\label{T7}
	Let $(X,g)$ be a compact $2n$-dimensional smooth Riemannian manifold. Suppose that the curvature operator of $X$ obey one of the following condtions:\\
	(1) the curvature operator is $n$-positive;\\
	(2) the curvature operator is $n$-nonnegative and $X$ has restricted holonomy $SO(2n)$.\\
	If the Ricci curvature satisfies 
	$$Ric(g)D^{2}(g)\leq C(n),$$
	where $C(n)$ is a uniform positive constant,  then the isometry group of Riemannian metric $g$ on $X$ is finite.
\end{corollary}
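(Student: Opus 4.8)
The strategy is to argue by contradiction, using Theorem~\ref{T6} as the main input. Suppose $\mathrm{Iso}(X,g)$ were infinite. Since $X$ is compact, $\mathrm{Iso}(X,g)$ is a compact Lie group (by the Myers--Steenrod theorem), and an infinite compact Lie group has positive dimension; hence its Lie algebra is nonzero, and, as recalled in the introduction, $X$ carries a nontrivial Killing vector field $\theta^{\sharp}$.

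Next I would place $X$ in a class $\mathfrak{X}(n,\kappa,D,\La)$ to which Theorem~\ref{T6} applies. Here $\dim X = 2n$, so $\lfloor \tfrac{\dim X}{2}\rfloor = n$, and membership in $\mathfrak{X}(n,\kappa,D,\La)$ requires the average of the lowest $2n-n = n$ eigenvalues of the curvature operator to be at least $\kappa$. Fix once and for all any $\La_{0} > 0$, set $D = \mathrm{diam}(X,g)$ and $\kappa = -\La_{0}^{2}/D^{2} < 0$, so that $-\kappa D^{2} = \La_{0}^{2}$. Under either hypothesis the curvature operator is $n$-nonnegative, so $\lambda_{1}+\cdots+\lambda_{n}\geq 0 > n\kappa$, and therefore $X\in\mathfrak{X}(n,\kappa,D,\La_{0})$. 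Put $C(n):=C(2n,\La_{0})$, the constant furnished by Theorem~\ref{T6}; because both the hypothesis $Ric(g)D^{2}(g)\leq C(2n,\La_{0})$ and the conclusion $\chi(X)=0$ are scale invariant, this constant depends only on $n$ and $\La_{0}$, not on $D$. Theorem~\ref{T6}, applied with the Killing field $\theta^{\sharp}$, then yields $\chi(X)=0$ as soon as $Ric(g)D^{2}(g)\leq C(n)$.

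To reach a contradiction I would compute $\chi(X)$ from the curvature hypothesis. Under~(1), $n$-positivity of the curvature operator forces $b_{p}(X)=0$ for all $0<p<2n$ by \cite[Theorem~A]{PW}, so $\chi(X)=b_{0}(X)+b_{2n}(X)=2$. Under~(2), note first that $n$-nonnegativity implies $k$-nonnegativity for every $k\geq n$, since $\lambda_{1}+\cdots+\lambda_{n}\geq 0$ together with $\lambda_{1}\leq\cdots\leq\lambda_{n}$ forces $\lambda_{n}\geq 0$, hence $\lambda_{k}\geq 0$ for $k\geq n$. The Petersen--Wink refinement of the Bochner technique then shows that every harmonic $p$-form with $0<p\leq n$ is parallel; since $\mathrm{Hol}^{0}(g)=SO(2n)$ has no nonzero fixed vector in $\Lambda^{p}\mathbb{R}^{2n}$ for $0<p<2n$, such a form vanishes, so $b_{p}(X)=0$ for $0<p\leq n$, and by Poincar\'e duality $b_{p}(X)=0$ for all $0<p<2n$; again $\chi(X)=2$. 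In both cases $\chi(X)=2\neq 0$, contradicting the preceding paragraph, so $\mathrm{Iso}(X,g)$ must be finite.

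I expect the substantive point to be the computation $\chi(X)=2$ under hypothesis~(2): one must use the full-holonomy assumption precisely to upgrade the Bochner conclusion ``parallel harmonic $p$-form'' to ``vanishing harmonic $p$-form'' in the middle range of degrees, since $n$-nonnegativity alone only yields parallelism, not vanishing. The remaining work — verifying that the scale-invariant formulation of Theorem~\ref{T6} lets its constant be taken independent of $D$, and that the trivial choice of $\kappa$ indeed places $X$ in $\mathfrak{X}(n,\kappa,D,\La_{0})$ under only an $n$-nonnegativity assumption — is routine bookkeeping.
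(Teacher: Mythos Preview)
Your proof is correct and follows the same contradiction strategy as the paper: assume a nontrivial Killing field, conclude $\chi(X)=0$, and contradict $\chi(X)=2$ via Petersen--Wink (the paper invoking \cite[Theorem~A]{PW} for case~(1) and \cite[Page~36, Corollary]{PW} for case~(2), exactly the inputs you use). The only technical difference is that you apply Theorem~\ref{T6} directly with a fixed auxiliary $\Lambda_0>0$ to get $C(n):=C(2n,\Lambda_0)$, whereas the paper does not invoke Theorem~\ref{T6} as a black box but instead re-runs the estimates of Lemma~\ref{L3} in the limit $\Lambda\to 0^{+}$ (using $\liminf_{\Lambda\to 0^{+}}\Lambda C(\Lambda)\geq a_{n}$ and a specific normalization $\tfrac{\int t^{2}|\theta^{\sharp}|^{2}}{\mathrm{Vol}(g)}=36C_{1}^{2}(n)a^{2}$) to extract explicit bounds depending only on $n$; your shortcut is valid and arguably cleaner, while the paper's route yields concrete constants.
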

\begin{remark}
The second condition in Corollary \ref{T7} is sharp. For example, the $2n$-dimensional torus $T^{2n}$ has $\chi(T^{2n})=0$ and admits a flat metric with zero curvature eigenvalues. In this case, the flat metric on $T^{2n}$ has a trivial holonomy group but an infinite isometry group.
\end{remark}
\section{Preliminaries}
	\subsection{Curvature operator}
Let's first recall the definition of the curvature operator and the latest research progress in this area by Petersen and Wink \cite{PW}.	
	
Let $(X,g)$ be a compact $n$-dimensional Riemannian manifold and let $R(X,Y)Z=\na_{Y}\na_{X}Z-\na_{X}\na_{Y}Z+\na_{[X,Y]}Z$ denote its curvature tensor. The Weitzenb\"{o}ck curvature operator acting on a $(0,k)$-tensor $T$ is defined by
$$Ric(T)(X_{1},\cdots,X_{k})=\sum_{i=1}^{k}\sum_{j=1}^{n} (R(X_{i},e_{j})T)(X_{1},\cdots,e_{j},\cdots,X_{k}).$$
	We use a variation of the Ricci tensor to symbolize this as it is the Ricci tensor when evaluated on vector fields and $1$-forms. The Lichnerowicz Laplacian is then given by
	$$\De_{L}T=\na^{\ast}\na T+cRic(T)$$
	for some constant $c>0$. Notably, the Hodge Laplacian on differential forms corresponds to the case $c=1$.
	
	For a $(0,k)$-tensor and  $L\in\mathfrak{so}(TX)$, 	Petersen and Wink defined the action
	$$(LT)(X_{1},\cdots,X_{k})=-\sum_{i=1}^{k}T(X_{1},\cdots,LX_{i},\cdots,X_{k}).$$
	Noting that a $(0,k)$-tensor $T$ can be changed to a tensor $\hat{T}$ with values in $\La^{2}TX$. Implicitly this works as follows
	$$g(L,\hat{T}(X_{1},\cdots,X_{k}))=(LT)(X_{1},\cdots,X_{k}),$$
	for all $L\in\mathfrak{so}(TX)$.
	\begin{proposition}(\cite[Proposition 1.4]{PW})\label{P1}
		Let $R:\La^{2}T_{p}X\rightarrow\La^{2}T_{p}X$ be the curvature operator and $\Xi_{i}$ an othonormal basis for $\La^{2}T_{p}X$. If $\Xi_{i}$ is an eigenbasis of $R$ and $\la_{i}$ denote the corresponding eigenvalues, then
		\begin{equation}
			g(Ric(T),T)=\sum_{i}\la_{i}|\Xi_{i}T|^{2}.
		\end{equation}
	\end{proposition}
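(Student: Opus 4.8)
The plan is to reduce the stated formula to the elementary fact that a skew-symmetric endomorphism of $T_pX$ acts skew-symmetrically on $(0,k)$-tensors, once the Weitzenb\"{o}ck term $Ric(T)$ has been rewritten through the curvature operator $R:\La^{2}T_pX\to\La^{2}T_pX$. Two structural observations set this up. First, for each pair of vectors the curvature endomorphism $R(X,Y)$ lies in $\mathfrak{so}(T_pX)$, and its action on $T$ as a derivation coincides with the action $L\mapsto LT$ recalled above; hence each summand $(R(X_i,e_j)T)(X_1,\ldots,e_j,\ldots,X_k)$ in the definition of $Ric(T)$ is assembled from that action together with the contraction that inserts $e_j$ into the $i$-th slot. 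Second, under the identification $\La^{2}T_pX\cong\mathfrak{so}(T_pX)$ underlying the construction of $\hat T$, the operator $R$ applied to $e_a\wedge e_b$ coincides, up to the sign determined by the convention $R(X,Y)Z=\na_{Y}\na_{X}Z-\na_{X}\na_{Y}Z+\na_{[X,Y]}Z$, with the curvature endomorphism $R(e_a,e_b)$.

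The heart of the argument is the frame-independent identity
$$Ric(T)=-\sum_{i}\Xi_i\bigl((R\Xi_i)T\bigr),$$
valid for every orthonormal basis $\{\Xi_i\}$ of $\La^{2}T_pX$, in which $R\Xi_i\in\mathfrak{so}(T_pX)$ first acts on $T$ and then $\Xi_i$ acts on the result. I would prove this by expanding both sides in a local orthonormal frame $\{e_a\}$, taking $\{\Xi_i\}=\{e_a\wedge e_b\}_{a<b}$. Applying the Leibniz rule to the definition of $Ric(T)$ separates it into a \emph{diagonal} part, where $R(X_i,e_j)$ hits the very slot into which $e_j$ was inserted, and an \emph{off-diagonal} part. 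Using $\sum_{j}R(X_i,e_j)e_j=-Ric(X_i)$, the diagonal part becomes $\sum_i T(\ldots,Ric(X_i),\ldots)$ and is reproduced by the corresponding part of $-\sum_i\Xi_i((R\Xi_i)T)$ through the classical identity $\sum_{a<b}(R(e_a\wedge e_b))\circ(e_a\wedge e_b)=-Ric$ for the Ricci endomorphism; the off-diagonal parts are matched slot by slot and ordered pair by ordered pair, the required identity
$$\sum_{a<b}\bigl((e_a\wedge e_b)X_i\bigr)\otimes\bigl(R(e_a,e_b)X_l\bigr)=-\sum_{j}e_j\otimes\bigl(R(X_i,e_j)X_l\bigr)$$
following from the symmetries of the curvature tensor, with no appeal to the first Bianchi identity. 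Carrying out this bookkeeping — disentangling the contraction from the derivation action and correctly collapsing the restricted sums $\sum_{a<b}$ into unrestricted ones — is the step I expect to be the main obstacle.

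Granting the displayed identity, the conclusion follows in two lines. Choose $\{\Xi_i\}$ to be an eigenbasis of $R$, so that $R\Xi_i=\la_i\Xi_i$ and $Ric(T)=-\sum_i\la_i\,\Xi_i(\Xi_i T)$. Pairing with $T$ in the natural inner product on $(0,k)$-tensors and using that each $\Xi_i\in\mathfrak{so}(T_pX)$ acts skew-symmetrically, so that $g\bigl(\Xi_i(\Xi_i T),T\bigr)=-g(\Xi_i T,\Xi_i T)=-|\Xi_i T|^2$, one gets $g(Ric(T),T)=\sum_i\la_i|\Xi_i T|^2$. Finally I would note that the right-hand side equals $\langle R\hat T,\hat T\rangle$, hence is independent of the chosen eigenbasis, so the formula is unambiguous even when $R$ has repeated eigenvalues.
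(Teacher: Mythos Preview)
The paper does not supply its own proof of this proposition; it is quoted with a bare citation to \cite[Proposition~1.4]{PW}. Your outline is essentially the argument given there: one first establishes the basis-independent identity $Ric(T)=-\sum_\alpha \Xi_\alpha\bigl((R\Xi_\alpha)T\bigr)$ by expanding in a frame and matching the derivation action of $R(e_a,e_b)\in\mathfrak{so}(T_pX)$ against the definition of $Ric(T)$, and then specializes to an eigenbasis and uses the skew-symmetry of each $\Xi_i$ on $(0,k)$-tensors to obtain $g(Ric(T),T)=\sum_i\la_i|\Xi_iT|^2$. So your approach is correct and coincides with the source the paper defers to; the only genuinely delicate point---which you already flag---is the sign bookkeeping forced by the convention $R(X,Y)Z=\na_Y\na_XZ-\na_X\na_YZ+\na_{[X,Y]}Z$ and the particular identification $\La^2T_pX\cong\mathfrak{so}(T_pX)$ being used.
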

Petersen and Wink \cite{PW} observed that for any $p$-form $\a$ ($1\leq p\leq \lfloor\frac{n}{2}\rfloor$),
	$$|\Xi_{i}\a|^{2}\leq p|\a|^{2}$$
	while 
	$$\sum_{i}|\Xi_{i}T|^{2}=p(n-p)|\a|^{2}.$$
They also observed the following lemma which provides a general way to control the curvature term of the Lichnerowicz Laplacian $\De_{L}$ on $(0,k)$-tensors.
	\begin{lemma}(\cite[Lemma 2.1]{PW})\label{L1}
		Let $\mathfrak{R}:\La^{2}V\rightarrow\La^{2}V$ be an algebraic curvature operator with eigenvalues $\la_{1}\leq\cdots\leq\la_{\binom{n}{2}}$ and let $T\in T^{(0,k)}(V )$. Suppose there is $C\geq1$ such that
		$$|LT|^{2}\leq\frac{1}{C}|\hat{T}|^{2}|L|^{2}$$
		for all $L\in \mathfrak{so}(V)$.
		
		Let $\kappa\leq0$.	If $\frac{1}{\lfloor C\rfloor}(\la_{1}+\cdots+\la_{\lfloor C\rfloor})\geq\kappa$, then $g(\mathfrak{R}(\hat{T}),\hat{T}) \geq\kappa|\hat{T}|^{2}$ and if $\la_{1}+\cdots+\la_{\lfloor C\rfloor}>0$, then $g(\mathfrak{R}(\hat{T}),\hat{T})>0$ unless $\hat{T}=0$.
	\end{lemma}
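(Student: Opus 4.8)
The plan is to combine Proposition~\ref{P1} with an elementary estimate on weighted averages of the eigenvalues $\lambda_{i}$, the weights being exactly the quantities controlled by the hypothesis $|LT|^{2}\le\frac{1}{C}|\hat{T}|^{2}|L|^{2}$; no analysis is needed, only linear algebra once the right book-keeping is in place.

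First I would fix an orthonormal eigenbasis $\{\Xi_{i}\}$ of the symmetric operator $\mathfrak{R}$ on $\La^{2}V$ with $\mathfrak{R}\Xi_{i}=\lambda_{i}\Xi_{i}$, and set $a_{i}:=|\Xi_{i}T|^{2}\ge 0$ and $s:=|\hat{T}|^{2}$. Expanding each $\hat{T}(X_{1},\dots,X_{k})$ in the basis $\{\Xi_{i}\}$ and using $(\Xi_{i}T)(X_{1},\dots,X_{k})=g(\Xi_{i},\hat{T}(X_{1},\dots,X_{k}))$ gives $\sum_{i}a_{i}=s$, while Proposition~\ref{P1}, together with the identity $g(Ric(T),T)=g(\mathfrak{R}(\hat{T}),\hat{T})$, gives $g(\mathfrak{R}(\hat{T}),\hat{T})=\sum_{i}\lambda_{i}a_{i}$. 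Evaluating the hypothesis at the unit element $L=\Xi_{i}\in\mathfrak{so}(V)$ produces the key cap $a_{i}\le\frac{1}{C}s\le\frac{1}{m}s$, where $m:=\lfloor C\rfloor\ge 1$ (I tacitly assume $m\le\dim\La^{2}V=\binom{n}{2}$, so that $\lambda_{1}+\dots+\lambda_{m}$ is defined). If $\hat{T}=0$ the claim is trivial, so assume $s>0$ and set $t_{i}:=a_{i}/s$; then $t_{i}\ge 0$, $\sum_{i}t_{i}=1$, $t_{i}\le 1/m$, and it suffices to prove
$$\sum_{i}\lambda_{i}t_{i}\ \ge\ \frac{1}{m}\bigl(\lambda_{1}+\dots+\lambda_{m}\bigr),$$
since the right-hand side is $\ge\kappa$ by hypothesis.

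For this last inequality I would compare $(t_{i})$ with the uniform weight $u_{i}:=1/m$ for $1\le i\le m$ and $u_{i}:=0$ otherwise: writing $\delta_{i}:=t_{i}-u_{i}$, one has $\sum_{i}\delta_{i}=0$, and for $i\le m$ one has $\delta_{i}=t_{i}-1/m\le 0$ with $\lambda_{i}\le\lambda_{m}$, hence $\lambda_{i}\delta_{i}\ge\lambda_{m}\delta_{i}$, while for $i>m$ one has $\delta_{i}=t_{i}\ge 0$ with $\lambda_{i}\ge\lambda_{m}$, hence again $\lambda_{i}\delta_{i}\ge\lambda_{m}\delta_{i}$. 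Summing over $i$,
$$\sum_{i}\lambda_{i}t_{i}-\frac{1}{m}\sum_{j=1}^{m}\lambda_{j}=\sum_{i}\lambda_{i}\delta_{i}\ \ge\ \lambda_{m}\sum_{i}\delta_{i}=0,$$
so $g(\mathfrak{R}(\hat{T}),\hat{T})=s\sum_{i}\lambda_{i}t_{i}\ge\frac{s}{m}(\lambda_{1}+\dots+\lambda_{m})\ge\kappa s=\kappa|\hat{T}|^{2}$. For the strict statement, if $\lambda_{1}+\dots+\lambda_{m}>0$ and $\hat{T}\ne 0$ the same chain gives $g(\mathfrak{R}(\hat{T}),\hat{T})\ge\frac{s}{m}(\lambda_{1}+\dots+\lambda_{m})>0$.

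The only genuinely substantive point — and the step I would write out most carefully — is the passage from the tensorial hypothesis to the three scalar facts $a_{i}\le\frac{1}{C}s$, $\sum_{i}a_{i}=s$, and $g(\mathfrak{R}(\hat{T}),\hat{T})=\sum_{i}\lambda_{i}a_{i}$, i.e. the correct book-keeping between $T$, $\hat{T}$, $Ric(T)$ and the $\La^{2}V$-action; after that the estimate is purely combinatorial. I would also remark that $\kappa\le 0$ is never actually used — the argument yields the sharper bound $g(\mathfrak{R}(\hat{T}),\hat{T})\ge\frac{1}{m}(\lambda_{1}+\dots+\lambda_{m})|\hat{T}|^{2}$ — and that in the degenerate case $m=\binom{n}{2}$ the cap forces $t_{i}=1/m$ for all $i$, so that case is automatically covered.
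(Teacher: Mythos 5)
Your proof is correct; the paper itself gives no argument for this lemma (it is quoted from \cite{PW}), and your route is essentially the standard one behind it: diagonalize $\mathfrak{R}$, get $g(\mathfrak{R}(\hat{T}),\hat{T})=\sum_i\la_i|\Xi_i T|^2$ and $\sum_i|\Xi_i T|^2=|\hat{T}|^2$ as in Proposition \ref{P1}, cap each weight by testing the hypothesis on $L=\Xi_i$, and then bound the weighted average of the eigenvalues. Your only deviation is cosmetic: you relax the cap $1/C$ to $1/\lfloor C\rfloor$ and compare with the uniform weights on the lowest $\lfloor C\rfloor$ eigenvalues rather than exhibiting the exact extremal distribution (weight $1/C$ on the lowest $\lfloor C\rfloor$ eigenvalues and the remainder on $\la_{\lfloor C\rfloor+1}$), which is equally valid and, as you note, never uses $\kappa\leq0$.
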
	
We then have
	\begin{corollary}\label{C4}
		Let $(X,g)$ be a compact $n$-dimensional smooth Riemannian manifold, and let $\la_{1}\leq\cdots\leq \la_{\binom{n}{2}}$ denote the eigenvalues of the curvature operator of $(X,g)$. Fix $1\leq p\leq\lfloor\frac{n}{2}\rfloor$. If
		$$\frac{\la_{1}+\cdots+\la_{n-p}}{n-p}\geq\kappa.$$
		then for any $\a\in\Om^{k}(X)$, $k\leq p$ or $k\geq n-p$,
		\begin{equation}\label{E9}
		g(Ric(\a),\a)\geq\kappa|\hat{\a}|^{2}=\kappa k(n-k)|\a|^{2}.
		\end{equation}
		 		\end{corollary}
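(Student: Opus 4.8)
The plan is to derive \eqref{E9} as a direct application of Lemma \ref{L1} with $V=T_pX$, $\mathfrak{R}=R$ the curvature operator and $T=\a$ the given $k$-form, followed by the identification of $g(R(\hat\a),\hat\a)$ with $g(Ric(\a),\a)$ provided by Proposition \ref{P1}. Two pointwise facts about a $k$-form will be used: the norm identity $|\hat\a|^2=\sum_i|\Xi_i\a|^2=k(n-k)|\a|^2$, and the Petersen--Wink estimate $|L\a|^2\le k\,|\a|^2\,|L|^2$ for every $L\in\mathfrak{so}(T_pX)$, valid when $k\le\lfloor\tfrac{n}{2}\rfloor$.

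For $k\le p$ both facts apply as they stand. For $k\ge n-p$ I would instead work with the Hodge dual $\ast\a$, an $(n-k)$-form with $n-k\le p\le\lfloor\tfrac{n}{2}\rfloor$: since the Hodge star is parallel it preserves pointwise norms and commutes with the $\mathfrak{so}(T_pX)$-action on forms, with $\na^{\ast}\na$, and hence with the Weitzenb\"{o}ck curvature operator $Ric$; therefore $g(Ric(\a),\a)=g(Ric(\ast\a),\ast\a)$, $|\hat\a|^2=|\widehat{\ast\a}|^2=k(n-k)|\a|^2$ and $|L\a|^2=|L(\ast\a)|^2\le(n-k)|\a|^2|L|^2$. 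Either way one gets $|L\a|^2\le\min(k,n-k)\,|\a|^2\,|L|^2$ for all $L$. I would then invoke Lemma \ref{L1} with the integer constant $C=n-p$ (which is $\ge\lceil\tfrac{n}{2}\rceil\ge1$): since $k\le p$ forces $n-k\ge n-p$ while $k\ge n-p$ forces $k\ge n-p$, in both cases $\min(k,n-k)\le\frac{k(n-k)}{n-p}$, so $|L\a|^2\le\tfrac{1}{C}|\hat\a|^2|L|^2$; and the remaining hypothesis $\tfrac{1}{\lfloor C\rfloor}(\la_1+\cdots+\la_{\lfloor C\rfloor})\ge\kappa$ is precisely the assumed lower bound $\tfrac{\la_1+\cdots+\la_{n-p}}{n-p}\ge\kappa$. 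Lemma \ref{L1} then yields $g(R(\hat\a),\hat\a)\ge\kappa|\hat\a|^2$.

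To conclude I would identify the two sides: expanding $\hat\a$ in an eigenbasis $\{\Xi_i\}$ of $R$ and using $g(\Xi_i,\hat\a(X_1,\dots,X_k))=(\Xi_i\a)(X_1,\dots,X_k)$ gives $g(R(\hat\a),\hat\a)=\sum_i\la_i|\Xi_i\a|^2$, which equals $g(Ric(\a),\a)$ by Proposition \ref{P1}; together with $|\hat\a|^2=k(n-k)|\a|^2$ this is exactly \eqref{E9}. I do not expect a genuine obstacle: all of the analytic content is contained in the two cited Petersen--Wink results, and the only points needing care are that the Hodge star truly intertwines $Ric$ and the hat-construction (which is where ``$\ast$ parallel'' is used) and the elementary arithmetic reconciling the constant $C=n-p$ with the bound $\min(k,n-k)$ and with the prescribed eigenvalue average --- this is exactly where the hypotheses $p\le\lfloor\tfrac{n}{2}\rfloor$ and ``$k\le p$ or $k\ge n-p$'' get consumed.
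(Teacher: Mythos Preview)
Your proposal is correct and follows the same overall strategy as the paper: reduce to Lemma~\ref{L1} via the two Petersen--Wink facts $|\hat\a|^{2}=k(n-k)|\a|^{2}$ and $|L\a|^{2}\le\min\{k,n-k\}\,|\a|^{2}|L|^{2}$, then identify $g(\mathfrak{R}(\hat\a),\hat\a)$ with $g(Ric(\a),\a)$ via Proposition~\ref{P1}. Two tactical points differ. First, for $k\ge n-p$ you recover the bound $|L\a|^{2}\le(n-k)|\a|^{2}|L|^{2}$ by passing through the Hodge dual; the paper instead simply cites \cite[Lemma~2.2]{PW}, which already states the $\min\{k,n-k\}$ bound for all $k$, so your detour through $\ast$ (and the accompanying remarks on $Ric$ commuting with $\ast$) is not needed, though it is correct. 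Second, you feed $C=n-p$ into Lemma~\ref{L1}, so that the eigenvalue hypothesis matches directly, whereas the paper takes $C=\tilde k:=\max\{k,n-k\}\ge n-p$ and then invokes the monotonicity $\tfrac{1}{\tilde k}(\la_{1}+\cdots+\la_{\tilde k})\ge\tfrac{1}{n-p}(\la_{1}+\cdots+\la_{n-p})\ge\kappa$ of lowest-eigenvalue averages. Your choice avoids that extra monotonicity step; the paper's choice uses a sharper value of $C$ but reaches the same conclusion.
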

\begin{proof}
By \cite[Lemma 2.2]{PW}, any $k$-form $\a$ satisfies
$$|L\a|^{2}\leq\min\{k,n-k\}|\a|^{2}|L|^{2}$$
for all $L\in\mathfrak{so}(TX)$. Following \cite[Proposition 2.5]{PW}, we also have
$$|\hat{\a}|^{2}=k(n-k)|\a|^{2}.$$
For the  case $k\leq p$, we have
	$$|L\a|^{2}\leq k|\a|^{2}|L|^{2}=\frac{1}{n-k}|\hat{\a}|^{2}|L|^{2}$$
For the case $k\geq n-p$ ,  we have
$$|L\a|^{2}\leq (n-k)|\a|^{2}|L|^{2}=\frac{1}{k}|\hat{\a}|^{2}|L|^{2}.$$ 
Let $\tilde{k}=\max\{k,n-k\}\geq n-p$. Then 
$$|L\a|^{2}\leq\frac{1}{\tilde{k}}|\hat{\a}|^{2}|L|^{2}$$
and the eigenvalues of the curvature operator satisfy 
		$$\frac{1}{\tilde{k}}(\la_{1}+\cdots+\la_{\tilde{k}})\geq \frac{1}{n-p}(\la_{1}+\cdots+\la_{n-p})\geq \kappa.$$ 
Applying Lemma \ref{L1} yields the inequality (\ref{E9}).
	\end{proof}
	\begin{remark}
		Since the Ricci curvature is bounded from below by the sum of the lowest $(n-1)$ eigenvalues of the curvature operator, $Ric(X)\geq(n-1)\kappa$.
	\end{remark}
	\subsection{Poincar\'{e} inequality }
	In this section, we recall the following Poincar\'{e} inequality (see \cite{Ber,Che}). For $1\leq q$, let $p$ satisfies $1\leq p\leq\frac{nq}{n-q}$ and $p<\infty$. Define the Sobolev constant $\Sigma(n,p,q)$ of the canonical unit sphere $S^{n}$ by
	$$\Sigma(n,p,q)=\sup\{\frac{\|f\|_{L^{p}}}{\|df\|_{L^{q}}}:f\in L^{q}_{1}(S^{n}),f\neq0,\int_{S^{n}}f=0\}.$$ 
	If the Ricci curvature of the compact Riemannian manifold $(X,g)$ satisfies 
	$$Ric(g)D^{2}(g)\geq-(n-1)b^{2},$$
	where $D(g)$ is the diameter of $g$, then there exists a positive number $R(b)=\frac{D(g)}{bC(b)}$ (see \cite[Appendix I, Theorem 2]{Ber}), where $C(b)$ is the unique positive root of the equation
	\begin{equation*}
		x\int_{0}^{b}(\cosh t+x\sinh t)^{n-1}dt=\int_{0}^{\pi}\sin^{n-1}tdt,
	\end{equation*}
	such that the following  Poincar\'{e} inequality holds (see \cite[Page 397]{Ber} or \cite[Theorem 5.1]{Che}).
	\begin{theorem}\label{T1}
		Let $(X,g)$ be a compact $n$-dimensional smooth Riemannian manifold  satisfying
		\begin{equation*}
			Ric(g)D^{2}(g)\geq-(n-1)b^{2},
		\end{equation*}
		for some constant $b>0$. Then for each $1\leq p\leq\frac{nq}{n-q}$, $p<\infty$ and $f\in L^{q}_{1}(X)$, we have
		\begin{equation*}
			\begin{split}
				&\|f-\frac{1}{{\rm{Vol}}(g)}\int_{X}f\|_{L^{p}(X)}\leq S_{p,q}\|df\|_{L^{q}(X)},\\
				&\|f\|_{L^{p}(X)}\leq S_{p,q}\|df\|_{L^{q}(X)}+{\rm{Vol}}(g)^{\frac{1}{p}-\frac{1}{q}}\|f\|_{L^{q}(X)},\\
			\end{split}
		\end{equation*} 
		where ${\rm{Vol}}(g)$ is the volume of $(X,g)$, $S(p,q)=(\frac{{\rm{Vol}}(g)}{{\rm{Vol}}(S^{n})})^{\frac{1}{p}-\frac{1}{q} }R(b)\Sigma(n,p,q)$.
	\end{theorem}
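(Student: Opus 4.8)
The plan is to deduce Theorem~\ref{T1} from the comparison‑geometry results of B\'erard--Meyer and Cheeger, so the proof is mainly a matter of assembling classical ingredients. First I would reduce the two displayed inequalities to one. Writing $\bar f=\frac{1}{{\rm{Vol}}(g)}\int_X f$, the first inequality is the genuine content; the second follows from $f=(f-\bar f)+\bar f$, the first inequality applied to $f-\bar f$, and H\"older's inequality in the form $\|\bar f\|_{L^p(X)}={\rm{Vol}}(g)^{1/p}|\bar f|\le{\rm{Vol}}(g)^{1/p-1/q}\|f\|_{L^q(X)}$. So everything reduces to the Neumann--Sobolev inequality $\|f-\bar f\|_{L^p(X)}\le S(p,q)\|df\|_{L^q(X)}$.

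Next I would pass from this Sobolev inequality to an isoperimetric one. By the co-area formula together with the layer-cake (truncation) argument of Federer--Fleming, the $L^p$--$L^q$ inequality above follows from a \emph{relative isoperimetric inequality} on $X$: a bound of the shape ${\rm{Vol}}_{n-1}(\partial\Omega)\ge A\cdot I_{S^n}\!\big({\rm{Vol}}(\Omega)/{\rm{Vol}}(g)\big)$ for every Borel set $\Omega\subset X$ with ${\rm{Vol}}(\Omega)\le\tfrac12{\rm{Vol}}(g)$, where $I_{S^n}$ denotes the isoperimetric profile of the round unit sphere. Carrying out the one-dimensional optimization exactly as on $S^n$, the resulting Sobolev constant is $\Sigma(n,p,q)$ rescaled by $({\rm{Vol}}(g)/{\rm{Vol}}(S^n))^{1/p-1/q}$ and by the reciprocal of $A$, so matching the stated $S(p,q)$ reduces to taking $A=\frac{1}{R(b)}({\rm{Vol}}(g)/{\rm{Vol}}(S^n))^{1/n}$.

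The substantive step is then to produce this relative isoperimetric inequality from the hypothesis $Ric(g)D^2(g)\ge-(n-1)b^2$. After rescaling so that $Ric\ge-(n-1)(b/D)^2$ with ${\rm{diam}}\le D$, the Bishop--Gromov volume comparison theorem controls $r\mapsto{\rm{Vol}}(B_r(p))$, and Gromov's refinement of the L\'evy isoperimetric inequality (in the form of B\'erard--Meyer) compares the boundary-to-enclosed-volume profile of $X$ with that of a one-parameter family of rotationally symmetric model spaces whose radial density is precisely $t\mapsto(\cosh t+x\sinh t)^{n-1}$. Requiring the total model volume to match produces the transcendental equation
$$x\int_0^b(\cosh t+x\sinh t)^{n-1}\,dt=\int_0^\pi\sin^{n-1}t\,dt,$$
whose unique positive root is $C(b)$, and hence the comparison factor $R(b)=D(g)/(b\,C(b))$; this is the step I would lift from \cite[Appendix I]{Ber}.

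The hard part will be exactly this isoperimetric comparison: obtaining the \emph{sharp} constant $R(b)$ rather than merely some constant depending on $n$, $b$ and $D$. This needs the Gromov--L\'evy symmetrization argument adapted to a lower Ricci bound of prescribed (possibly negative) sign and to a diameter constraint, the subtle point being that the extremal model is not a single space (as it is when $Ric>0$) but one member of a family, singled out by the diameter bound. Once this sharp isoperimetric inequality is in hand, the return to the stated $L^p$--$L^q$ estimates is routine, and in practice I would simply cite \cite{Ber} and \cite{Che} after recording the reduction in the first two steps.
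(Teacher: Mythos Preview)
Your proposal is appropriate, but note that the paper does not prove Theorem~\ref{T1} at all: it is stated as a known result with the citation ``see \cite[Page 397]{Ber} or \cite[Theorem 5.1]{Che}'' and is used as a black box thereafter. Your outline of the B\'erard--Gromov--L\'evy argument (reduction to a relative isoperimetric inequality via Federer--Fleming, then comparison with the model profile to extract the constant $R(b)=D(g)/(bC(b))$) is the correct classical route and is in fact \emph{more} than the paper provides; your final remark that you would simply cite \cite{Ber} and \cite{Che} matches exactly what the paper does.
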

	Chen provided the following lower bound estimate for $bC(b)$ as $ b\rightarrow 0$.
	\begin{lemma}(\cite[Lemma 5.6]{Che})\label{L2}
		Let $C(b)$ be the unique positive root of the equation
		\begin{equation*}
			x\int_{0}^{b}(\cosh t+x\sinh t)^{n-1}dt=\int_{0}^{\pi}\sin^{n-1}tdt.
		\end{equation*}
		Then 
		$$\lim\inf_{b\rightarrow 0}b C(b)\geq a_{n}>0$$
		for some constant $a_{n}$ depending only on $n$. In particular, there is a positive constant $c_{1}(n)$ such that if 
	$b\leq c_{1}(n),$
		then  $bC(b)\geq a_{n}$.
	\end{lemma}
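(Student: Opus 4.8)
The plan is to pin down the exact limit of $bC(b)$ as $b\to 0$ by trapping the defining integral between two elementary closed-form expressions; both assertions then follow immediately. Throughout, write $I_{n}=\int_{0}^{\pi}\sin^{n-1}t\,dt$ for the fixed positive right hand side and set $F(b,x)=x\int_{0}^{b}(\cosh t+x\sinh t)^{n-1}\,dt$, so that $C(b)$ is the unique positive solution of $F(b,C(b))=I_{n}$. For each $b>0$ the map $x\mapsto F(b,x)$ is continuous and strictly increasing on $[0,\infty)$, with $F(b,0)=0$ and $F(b,x)\to\infty$ as $x\to\infty$, so $C(b)$ is well defined and positive.

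First I would record the easy inequality. Since $\cosh t\geq 1$ and $\sinh t\geq t$ on $[0,b]$, one has $F(b,x)\geq x\int_{0}^{b}(1+xt)^{n-1}\,dt=\tfrac{1}{n}\bigl((1+bx)^{n}-1\bigr)$. Evaluating at $x=C(b)$ gives $(1+bC(b))^{n}\leq 1+nI_{n}$, hence $bC(b)\leq L_{n}:=(1+nI_{n})^{1/n}-1$ for every $b>0$; in particular $\{bC(b)\}_{b>0}$ is bounded.

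The heart of the matter is the matching lower bound. On $[0,b]$ we have $\cosh t\leq\cosh b$ and, by convexity of $\sinh$ on $[0,b]$, also $\sinh t\leq\tfrac{\sinh b}{b}t$. Therefore $F(b,x)\leq x\int_{0}^{b}\bigl(\cosh b+\tfrac{x\sinh b}{b}t\bigr)^{n-1}\,dt=\tfrac{b}{n\sinh b}\bigl((\cosh b+x\sinh b)^{n}-\cosh^{n}b\bigr)$, and plugging in $x=C(b)$ yields $I_{n}\leq\tfrac{b}{n\sinh b}\bigl((\cosh b+C(b)\sinh b)^{n}-\cosh^{n}b\bigr)$. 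Now choose $b_{j}\downarrow 0$ along which $b_{j}C(b_{j})\to\ell:=\liminf_{b\to 0}bC(b)$, which is finite by the upper bound. Since $C(b)\sinh b=bC(b)\cdot\tfrac{\sinh b}{b}$ and $\tfrac{\sinh b}{b},\cosh b,\tfrac{b}{\sinh b}\to 1$ as $b\to 0$, passing to the limit along $b_{j}$ gives $I_{n}\leq\tfrac{1}{n}\bigl((1+\ell)^{n}-1\bigr)$, i.e.\ $\ell\geq L_{n}$. Together with $\ell\leq L_{n}$ this forces $\lim_{b\to 0}bC(b)=L_{n}>0$. Taking $a_{n}:=L_{n}/2$, there is then $c_{1}(n)>0$ with $bC(b)\geq a_{n}$ for all $b\leq c_{1}(n)$, which is the claim.

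I do not anticipate any genuine obstacle here; the argument is entirely elementary. The only points requiring a little care are the convexity estimate $\sinh t\leq\tfrac{\sinh b}{b}t$, which is exactly what yields a clean closed form for the upper bound on $F(b,x)$, and the observation---needed before passing to the limit inside the integral inequality---that $\liminf_{b\to 0}bC(b)$ is already known to be finite, which is precisely the content of the trivial upper bound. As a consistency check, the substitution $s=xt$ rewrites $F(b,x)=\int_{0}^{bx}\bigl(\cosh(s/x)+x\sinh(s/x)\bigr)^{n-1}\,ds$, which converges to $\int_{0}^{bx}(1+s)^{n-1}\,ds$ as $x\to\infty$, making the value $L_{n}=(1+nI_{n})^{1/n}-1$ transparent.
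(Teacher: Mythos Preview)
Your argument is correct and, in fact, yields the sharper statement $\lim_{b\to 0}bC(b)=(1+nI_{n})^{1/n}-1$, not merely a lower bound on the $\liminf$. Both the lower estimate $F(b,x)\geq\tfrac{1}{n}\bigl((1+bx)^{n}-1\bigr)$ and the upper estimate via $\cosh t\le\cosh b$, $\sinh t\le\tfrac{\sinh b}{b}t$ (chord-above-graph for the convex function $\sinh$ on $[0,b]$) are valid, the closed-form evaluations are right, and the passage to the limit is justified since you first established boundedness of $bC(b)$.

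There is no proof in the present paper to compare against: the lemma is quoted verbatim from \cite[Lemma~5.6]{Che} and left unproved here. Your elementary sandwich argument is self-contained and pins down the exact limiting constant, which is more than the paper needs (only $\liminf_{b\to 0}bC(b)>0$ is used downstream, e.g.\ in Proposition~\ref{L4} and in the proofs of Theorem~\ref{T4} and Corollary~\ref{T5}). One minor cosmetic point: in the final ``consistency check'' you let $x\to\infty$ while tracking $bx$, which is the right heuristic but is not actually needed once the two-sided trap has been closed; you could safely omit it.
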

	
	\section{Dirac operator $\mathcal{D}_{\theta}$ on Riemannian manifold  }
	
	\subsection{Index of $\mathcal{D}_{\theta}$}
	Let $(X,g)$ be a compact $n$-dimensional Riemannian manifold. We denote by $\Om^{p}(X)$ the spaces of real differential $p$-forms for $0\leq p\leq n$. To simplify notation, we often identify real vector fields with real $1$-forms using the isomorphisms. Given a smooth $1$-form $\theta\in\Om^{1}(X)$, the vector field $\theta^{\sharp}$ on $X$ is characterized by the condition $\theta(V)=g(V,\theta^{\sharp})$, for all vector filed $V$ on $X$. We denote the contraction by the vector field $\theta^{\sharp}$ as $i_{\theta^{\sharp}}$. The Hodge star operator with respect to metric $g$ is denoted by $\ast$. We define the differential operator $d_{\theta}:\Om^{k}(X)\rightarrow\Om^{k+1}(X)$ as
	\begin{equation*}
	d_{\theta}=d+\theta\wedge.
	\end{equation*}
Its formal adjoint $d^{\ast}_{\theta}:\Om^{k}(X)\rightarrow\Om^{k-1}(X)$ is given by
$$d_{\theta}^{\ast}=d^{\ast}+i_{\theta^{\sharp}}.$$	
\begin{remark}
It is important to note that the differential $d_{\theta}$ does not satisfy the crucial property of $(d_{\theta})^{2}=0$ required for introducing cohomology groups (see Section \ref{S1}). In fact, we have $(d_{\theta})^{2}=d\theta$.	
\end{remark}
Define the twisted Dirac operator $\mathcal{D}_{\theta}$ acting on differential forms by $$\mathcal{D}_{\theta}=d_{\theta}+d^{\ast}_{\theta}:\Om^{+}(X)\rightarrow\Om^{-}(X),$$
where $\Om^{+}(X)=\oplus_{k=even}\Om^{k}(X)$ and $\Om^{-}(X)=\oplus_{k=odd}\Om^{k}(X)$.	
Recall that for the  operator $d+d^{\ast}$, the index computes the Euler number: \begin{equation*}
	\begin{split}
	{\rm{Index}}(d+d^{\ast})&=\dim\ker(d+d^{\ast})-\dim{\rm{coker}}(d+d^{\ast})\\
	&=\dim\bigoplus_{p=even}\mathcal{H}^{k}(X)-\dim\bigoplus_{p=odd}\mathcal{H}^{k}(X)\\
	&=\sum_{p=0}^{n}(-1)^{k}b_{k}(X)\\
	&=\chi(X).	
	\end{split}
	\end{equation*}
	We then have
	\begin{lemma}
		The index of $\mathcal{D}_{\theta}$ satisfies
		$${\rm{Index}}(\mathcal{D}_{\theta}):=\dim\ker(\mathcal{D}_{\theta})-\dim{\rm{coker}}(\mathcal{D}_{\theta})=\chi(X).$$
		Moreover, if $d\theta=0$, then
		$$	\sum_{p=0}^{n}(-1)^{p}\dim\ker d_{\theta}\cap\ker d^{\ast}_{\theta}\cap\Om^{p}(X)=\chi(X).$$
	\end{lemma}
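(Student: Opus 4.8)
The plan is to establish the two claimed identities by relating the twisted Dirac operator $\mathcal{D}_\theta$ to the untwisted one $d+d^\ast$ through a deformation argument, and then invoking the homotopy invariance of the Fredholm index. First I would observe that the symbol of $\mathcal{D}_\theta = (d+\theta\wedge) + (d^\ast + i_{\theta^\sharp})$ agrees with the symbol of $d+d^\ast$, since $\theta\wedge$ and $i_{\theta^\sharp}$ are zeroth-order (bundle) operators; hence $\mathcal{D}_\theta$ is an elliptic first-order operator on the $\mathbb{Z}/2$-graded bundle $\Om^+(X)\oplus\Om^-(X)$, and therefore Fredholm with a well-defined index. Consider the family $\mathcal{D}_{t\theta} = (d + t\,\theta\wedge) + (d^\ast + t\, i_{\theta^\sharp})$ for $t\in[0,1]$: this is a continuous (indeed affine) path of elliptic operators of the same order and same symbol, so by stability of the index under continuous deformation within elliptic operators we get ${\rm Index}(\mathcal{D}_\theta) = {\rm Index}(\mathcal{D}_0) = {\rm Index}(d+d^\ast) = \chi(X)$, using the computation recalled just above the Lemma.

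Next I would prove the second assertion under the extra hypothesis $d\theta = 0$. When $d\theta=0$, the remark preceding the Lemma tells us $(d_\theta)^2 = d\theta = 0$, so $d_\theta$ is a genuine differential and its formal adjoint $d^\ast_\theta$ satisfies $(d^\ast_\theta)^2 = 0$ as well. Consequently $\mathcal{D}_\theta^2 = d_\theta d^\ast_\theta + d^\ast_\theta d_\theta =: \Delta_\theta$ is a (twisted Hodge) Laplacian, and the standard Hodge-theoretic argument applies verbatim: the kernel of $\mathcal{D}_\theta$ on $\Om^\bullet(X)$ coincides with $\ker\mathcal{D}_\theta^2 = \ker\Delta_\theta = \ker d_\theta\cap\ker d^\ast_\theta$, and this decomposes as a direct sum over degrees $p$ of the finite-dimensional spaces $\H^p_\theta := \ker d_\theta\cap\ker d^\ast_\theta\cap\Om^p(X)$. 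Because $\mathcal{D}_\theta$ is formally self-adjoint on the full space $\Om^\bullet(X)$ (as $d_\theta$ and $d^\ast_\theta$ are mutual adjoints), its cokernel on $\Om^+(X)\to\Om^-(X)$ is identified with $\ker\mathcal{D}_\theta\cap\Om^-(X)$, so that
\begin{equation*}
{\rm Index}(\mathcal{D}_\theta) = \dim\!\Big(\bigoplus_{p\ {\rm even}}\H^p_\theta\Big) - \dim\!\Big(\bigoplus_{p\ {\rm odd}}\H^p_\theta\Big) = \sum_{p=0}^{n}(-1)^p\dim\H^p_\theta.
\end{equation*}
Combining this with the first part gives $\sum_{p=0}^{n}(-1)^p\dim(\ker d_\theta\cap\ker d^\ast_\theta\cap\Om^p(X)) = \chi(X)$, which is exactly the stated identity.

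The main obstacle I anticipate is making the deformation argument fully rigorous — specifically, justifying that ${\rm Index}(\mathcal{D}_{t\theta})$ is constant in $t$. The cleanest route is to note that all $\mathcal{D}_{t\theta}$ are classical pseudodifferential (in fact differential) operators of order one with the \emph{same} principal symbol as $d+d^\ast$, which is elliptic; ellipticity depends only on the principal symbol, so every $\mathcal{D}_{t\theta}$ is elliptic, and for a compact manifold the index of an elliptic operator depends only on the homotopy class of its symbol (equivalently, the index is locally constant on the space of elliptic operators in the appropriate operator-norm topology). Alternatively, one can argue purely analytically: $\mathcal{D}_{t\theta} - \mathcal{D}_0$ is a bounded (order-zero) perturbation, and the index of a Fredholm operator is invariant under continuous perturbations in the operator norm on the relevant Sobolev completion, so $t\mapsto {\rm Index}(\mathcal{D}_{t\theta})$ is continuous and integer-valued on $[0,1]$, hence constant. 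I would use whichever of these the surrounding text has set up; the rest of the proof is the routine Hodge-theory bookkeeping sketched above.
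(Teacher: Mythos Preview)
Your proposal is correct and follows essentially the same approach as the paper: both argue that $\mathcal{D}_\theta$ has the same principal symbol as $d+d^\ast$ (hence the same index $\chi(X)$), and then use $(d_\theta)^2=0$ to split $\ker\mathcal{D}_\theta$ degree-by-degree into $\bigoplus_p \ker d_\theta\cap\ker d^\ast_\theta\cap\Om^p(X)$. Your deformation/homotopy justification is a more explicit version of the paper's one-line appeal to ``index depends only on the principal symbol,'' but the argument is the same in substance.
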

	\begin{proof}
 Since $\mathcal{D}_{\theta}$ is elliptic, its index depends only on the principal symbol, which coincides with that of $d+d^{\ast}$. Thus ${\rm{Index}}(\mathcal{D}_{\theta})=\chi(X).$	
		
When $(d_{\theta})^{2}=0$, we obtain
		$$\ker\mathcal{D}_{\theta}\cap\Om^{\pm}(X)=\bigoplus_{p=even/odd}\ker d_{\theta}\cap\ker d^{\ast}_{\theta}\cap\Om^{p}(X).$$
		Therefore
		\begin{equation*}
			\begin{split}
				{\rm{Index}}(\mathcal{D}_{\theta})&=\sum_{p=even}\dim\ker d_{\theta}\cap\ker d^{\ast}_{\theta}\cap\Om^{p}(X)-\sum_{p=odd}\dim\ker d_{\theta}\cap\ker d^{\ast}_{\theta}\cap\Om^{p}(X)\\
				&=\sum_{p=0}^{n}(-1)^{p}\dim\ker d_{\theta}\cap\ker d^{\ast}_{\theta}\cap\Om^{p}(X).
			\end{split}
		\end{equation*}
	\end{proof}
	\subsection{Integral formula}	
	We begin by establishing a fundamental integral identity that plays a central role in our analysis. In \cite{Che,CH}, the authors also derived some similar integral equalities  for some certain Dirac operators, but our proof method differs from their. 
	\begin{lemma}(\cite{EFM})\label{L5}
		For any $u,v\in\Om^{p}(X)$, we have a pointwise identity as follows:
		\begin{equation}
		\int_{X}\langle L_{\theta^{\sharp}}u,v\rangle=\int_{X}\langle i_{\theta^{\sharp}}u,d^{\ast}v\rangle+\int_{X}\langle du,\theta\wedge v\rangle=\int_{X}\langle\na_{\theta^{\sharp}}u,v\rangle+\int_{X}p\langle(\na \theta^{\sharp})u,v\rangle,
		\end{equation}
		where $L_{\theta^{\sharp}}:=di_{\theta^{\sharp}}+i_{\theta^{\sharp}}d$ is the Lie derivative in the direction $\theta^{\sharp}$ and 
		$$[(\na\theta^{\sharp})u](e_{1},\cdots,e_{p})=\frac{1}{p}\sum_{i}u(e_{1},\cdots,\na_{e_{i}}\theta^{\sharp},\cdots,e_{p}).$$
	\end{lemma}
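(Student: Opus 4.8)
The plan is to establish the identity in Lemma~\ref{L5} by two independent computations, one giving the first equality and one giving the second, both relying on standard calculus of differential forms and the divergence theorem on the closed manifold $X$.

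\textbf{Step 1: the Cartan/Stokes side.} First I would use the Cartan formula $L_{\theta^{\sharp}} = d\,i_{\theta^{\sharp}} + i_{\theta^{\sharp}} d$ together with the fact that $i_{\theta^{\sharp}}$ is (up to sign) the formal adjoint of $\theta\wedge\cdot$ pointwise, and $d^{\ast}$ is the formal adjoint of $d$. Then
\begin{equation*}
\int_{X}\langle L_{\theta^{\sharp}}u,v\rangle
=\int_{X}\langle d\,i_{\theta^{\sharp}}u,v\rangle+\int_{X}\langle i_{\theta^{\sharp}}du,v\rangle
=\int_{X}\langle i_{\theta^{\sharp}}u,d^{\ast}v\rangle+\int_{X}\langle du,\theta\wedge v\rangle,
\end{equation*}
where the first term is integrated by parts using $d^{\ast}$ and the second uses the pointwise adjunction $\langle i_{\theta^{\sharp}}\alpha,\beta\rangle=\langle\alpha,\theta\wedge\beta\rangle$. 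This gives the first equality and requires only that $X$ be closed so that no boundary terms survive.

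\textbf{Step 2: the covariant-derivative side.} For the second equality I would express everything through the Levi-Civita connection. The key classical facts are: $L_{\theta^{\sharp}}u = \nabla_{\theta^{\sharp}}u + A_{\theta}\cdot u$, where $A_{\theta}$ is the algebraic action on $p$-forms induced by the endomorphism $\xi\mapsto\nabla_{\xi}\theta^{\sharp}$ of $TX$; this is just the coordinate-free statement that the Lie derivative of a tensor differs from the covariant derivative by curvature-free terms built from $\nabla\theta^{\sharp}$. With the normalization in the statement, $A_{\theta}\cdot u = p\,(\nabla\theta^{\sharp})u$, so $L_{\theta^{\sharp}}u = \nabla_{\theta^{\sharp}}u + p(\nabla\theta^{\sharp})u$ pointwise, and integrating yields the second equality directly — in fact as a pointwise identity before integration, which is the stronger assertion actually needed. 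I would verify the constant $p$ by testing on a decomposable form $u = \alpha_1\wedge\cdots\wedge\alpha_p$ and using the Leibniz rule for $L_{\theta^{\sharp}}$ and for $\nabla$ on such a product, checking that the cross terms assemble into $\sum_i u(e_1,\ldots,\nabla_{e_i}\theta^{\sharp},\ldots,e_p)$.

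\textbf{Main obstacle.} The routine but error-prone part is bookkeeping the signs and the combinatorial constant in Step~2: one must be careful that $(\nabla\theta^{\sharp})$ acting on a $p$-form, as defined with the $\frac{1}{p}$ prefactor, is exactly $\frac{1}{p}$ of the natural derivation extending the endomorphism $\nabla\theta^{\sharp}$ of $TX$, so that multiplying back by $p$ recovers the full action appearing in the Cartan-to-covariant comparison. Since the statement cites \cite{EFM}, I would otherwise simply invoke that reference for the pointwise identity and record here only the short derivation of the first equality from Cartan's formula plus integration by parts, remarking that the second follows from the standard relation between Lie and covariant derivatives.
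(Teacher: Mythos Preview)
Your proposal is correct. The paper does not supply its own proof of this lemma; it simply states the identity and cites \cite{EFM}. Your Step~1 (Cartan's formula plus the adjunctions $(d,d^{\ast})$ and $(i_{\theta^{\sharp}},\theta\wedge\cdot)$) and Step~2 (the pointwise relation $L_{\theta^{\sharp}}u=\nabla_{\theta^{\sharp}}u+\sum_{i}u(\cdot,\ldots,\nabla_{\cdot}\theta^{\sharp},\ldots,\cdot)$, obtained from torsion-freeness via $[\theta^{\sharp},Y]=\nabla_{\theta^{\sharp}}Y-\nabla_{Y}\theta^{\sharp}$) are exactly the standard arguments behind the cited result, so your write-up is consistent with---and more explicit than---the paper's treatment.
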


	\begin{theorem}(cf. \cite[Theorem 4.1 and Corllary 4.3]{Che})\label{T3}
		Let $(X,g)$ be a compact $n$-dimensional smooth Riemannian manifold and $\theta^{\sharp}$ a smooth vector field on $X$. Then for each $\a\in\ker\mathcal{D}_{\theta}\cap\Om^{\pm}(X)$, there is a positive constant $C_{1}(n)$ such that 
		\begin{equation*}
			\int_{X}|\theta^{\sharp}|^{2}|\a|^{2}\leq C_{1}(n)\int_{X}|\na \theta^{\sharp}|\cdot|\a|^{2}.
		\end{equation*}	
	\end{theorem}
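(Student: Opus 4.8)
The plan is to combine the Bochner--Weitzenb\"{o}ck machinery for the twisted operator $\mathcal{D}_{\theta}$ with Lemma~\ref{L5}, integrating by parts so that the potentially bad zeroth-order term $|\theta^{\sharp}|^{2}|\a|^{2}$ gets traded for the first-order quantity $|\na\theta^{\sharp}|\,|\a|^{2}$. Concretely, for $\a\in\ker\mathcal{D}_{\theta}\cap\Om^{\pm}(X)$ we have $d_{\theta}\a=0$ and $d^{\ast}_{\theta}\a=0$ (these two conditions are equivalent to $\mathcal{D}_{\theta}\a=0$ because $d_{\theta}\a\in\Om^{\mp}$ and $d^{\ast}_{\theta}\a\in\Om^{\mp}$ are orthogonal and sum to zero). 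Write these out componentwise: $d\a=-\theta\wedge\a$ and $d^{\ast}\a=-i_{\theta^{\sharp}}\a$. I would then feed these into the integral identity of Lemma~\ref{L5} applied componentwise to each homogeneous piece $\a_{k}$ of $\a$: from $\int\langle i_{\theta^{\sharp}}\a_{k},d^{\ast}\a_{k}\rangle+\int\langle d\a_{k},\theta\wedge\a_{k}\rangle$ and the relations above one gets something of the shape $\int_{X}|i_{\theta^{\sharp}}\a_{k}|^{2}+|\theta\wedge\a_{k}|^{2}=\int_{X}\langle\na_{\theta^{\sharp}}\a_{k},\a_{k}\rangle+k\int_{X}\langle(\na\theta^{\sharp})\a_{k},\a_{k}\rangle$ (up to signs to be tracked carefully). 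The left-hand side is the key: by the standard pointwise identity $|i_{\theta^{\sharp}}\a_{k}|^{2}+|\theta\wedge\a_{k}|^{2}=|\theta^{\sharp}|^{2}|\a_{k}|^{2}$ (valid on any $k$-form), this is exactly $\int_{X}|\theta^{\sharp}|^{2}|\a_{k}|^{2}$.

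Next I would control the right-hand side. The second term is immediately bounded: $|\langle(\na\theta^{\sharp})\a_{k},\a_{k}\rangle|\leq|\na\theta^{\sharp}|\,|\a_{k}|^{2}$ by Cauchy--Schwarz and the definition of $(\na\theta^{\sharp})$ acting as a derivation built from $\na_{e_i}\theta^{\sharp}$, so it contributes at most $k\int_{X}|\na\theta^{\sharp}|\,|\a_{k}|^{2}\leq n\int_{X}|\na\theta^{\sharp}|\,|\a|^{2}$. The first term $\int_{X}\langle\na_{\theta^{\sharp}}\a_{k},\a_{k}\rangle=\tfrac12\int_{X}\theta^{\sharp}(|\a_{k}|^{2})=\tfrac12\int_{X}\langle\theta,d(|\a_{k}|^{2})\rangle$, and integrating by parts this equals $-\tfrac12\int_{X}(d^{\ast}\theta)|\a_{k}|^{2}$. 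This is where I need to be careful, since $d^{\ast}\theta$ need not be pointwise small. The trick is that $d^{\ast}\theta=-\operatorname{div}\theta^{\sharp}$ and $|\operatorname{div}\theta^{\sharp}|\leq\sqrt{n}\,|\na\theta^{\sharp}|$ pointwise (the divergence is the trace of $\na\theta^{\sharp}$, hence bounded by $\sqrt n$ times its norm), so this term too is absorbed into $C(n)\int_{X}|\na\theta^{\sharp}|\,|\a|^{2}$. Summing over the homogeneous pieces $k$ (of a single parity) and using $|\a|^{2}=\sum_k|\a_k|^{2}$ gives the claimed inequality with an explicit $C_{1}(n)$, e.g. $C_1(n)=n+\tfrac{\sqrt n}{2}$.

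I expect the main obstacle to be bookkeeping rather than conceptual: getting every sign right when translating $\mathcal{D}_{\theta}\a=0$ into the pair $d\a=-\theta\wedge\a$, $d^{\ast}\a=-i_{\theta^{\sharp}}\a$ for each parity class, and then matching those against the specific form of Lemma~\ref{L5}, which is stated for a fixed-degree $u,v\in\Om^p(X)$ (so the cross-terms between $\Om^{k}$ and $\Om^{k+1}$ in $d\a=-\theta\wedge\a$ need to be paired correctly — $d\a_{k-1}+\theta\wedge\a_{k-1}$ landing in $\Om^{k}$ must be combined with the $d^\ast$ equation in the right degree). A secondary subtlety is ensuring that the identity $|i_{\theta^{\sharp}}\omega|^{2}+|\theta\wedge\omega|^{2}=|\theta^{\sharp}|^{2}|\omega|^{2}$ is applied with the correct normalization of inner products on $\La^k$ (the paper's convention must be checked against the factor-of-$k!$ issue), but this only affects the universal constant, not the structure. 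Once the signs are pinned down, the estimate is a direct consequence of the two ingredients already in hand: the harmonic-type equations $d_{\theta}\a=d^{\ast}_{\theta}\a=0$ and the integral identity of Lemma~\ref{L5}.
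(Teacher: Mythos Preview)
Your argument contains a genuine gap at the very first step. You claim that for $\a\in\Om^{\pm}(X)$ the equation $\mathcal{D}_{\theta}\a=0$ splits as $d_{\theta}\a=0$ and $d^{\ast}_{\theta}\a=0$, on the grounds that $d_{\theta}\a$ and $d^{\ast}_{\theta}\a$ both lie in $\Om^{\mp}(X)$ and are orthogonal. The orthogonality is false when $d\theta\neq0$: the degree-$(k{+}1)$ piece of $d_{\theta}\a$ comes from $\a_{k}$, while the degree-$(k{+}1)$ piece of $d^{\ast}_{\theta}\a$ comes from $\a_{k+2}$, and their $L^{2}$ pairing is
\[
\langle d_{\theta}\a_{k},\,d^{\ast}_{\theta}\a_{k+2}\rangle_{L^{2}}
=\langle (d_{\theta})^{2}\a_{k},\,\a_{k+2}\rangle_{L^{2}}
=\langle d\theta\wedge\a_{k},\,\a_{k+2}\rangle_{L^{2}},
\]
which is generically nonzero. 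Since the theorem is stated for an \emph{arbitrary} smooth vector field $\theta^{\sharp}$ (and is later applied to Killing fields, whose dual $1$-forms are never closed unless parallel), you cannot assume $d\theta=0$. Consequently the relations $d\a_{k}=-\theta\wedge\a_{k}$ and $d^{\ast}\a_{k}=-i_{\theta^{\sharp}}\a_{k}$ that you feed into Lemma~\ref{L5} are simply wrong; the correct degree-by-degree identities are $d_{\theta}\a_{k}=-d^{\ast}_{\theta}\a_{k+2}$, which couple adjacent components.

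The paper's proof avoids this by never decoupling: it uses only the combined relation $(d+d^{\ast})\a=-\theta\wedge\a-i_{\theta^{\sharp}}\a$, substitutes it into $\int\langle i_{\theta^{\sharp}}\a,d^{\ast}\a\rangle+\int\langle d\a,\theta\wedge\a\rangle$, and after the identities $\langle i_{\theta^{\sharp}}\a,\theta\wedge\a\rangle=\langle\a,\theta\wedge\theta\wedge\a\rangle=0$ and $\theta\wedge d\a+d(\theta\wedge\a)=d\theta\wedge\a$ obtains
\[
\int_{X}\langle L_{\theta^{\sharp}}\a,\a\rangle
=-\int_{X}|\theta^{\sharp}|^{2}|\a|^{2}-\int_{X}\langle\a,\,d\theta\wedge\a\rangle.
\]
The extra term $\int\langle\a,d\theta\wedge\a\rangle$ is precisely what your argument drops; it is then harmless because $|d\theta|\leq C(n)|\na\theta^{\sharp}|$ pointwise. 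Your treatment of the right-hand side of Lemma~\ref{L5} (the $\na_{\theta^{\sharp}}$ and $(\na\theta^{\sharp})$ terms via $\operatorname{div}\theta^{\sharp}$) is correct and matches the paper. So the fix is not a new idea but an extra bookkeeping term: restore the $d\theta$ contribution and bound it the same way.
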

	\begin{proof}
		We only prove the case of $n=even$. Let $\a=\sum_{k=0}^{n}\a_{2k}\in\Om^{+}(X)$ be a smooth differential form on $X$, where $\a_{2k}\in\Om^{2k}(X)$. The condition $\a\in\ker\mathcal{D}_{\theta}$ implies
		$$(d+d^{\ast})\a=-\theta\wedge\a-i_{\theta^{\sharp}}\a.$$
		We then have
		\begin{equation*}
		\begin{split}
		\int_{X}\langle L_{\theta^{\sharp}}\a,\a\rangle&=
		\int_{X}\langle i_{\theta^{\sharp}}\a,d^{\ast}\a\rangle+\int_{X}\langle d\a,\theta\wedge \a\rangle\\
		&=\int_{X}\langle i_{\theta^{\sharp}}\a,-d\a-\theta\wedge\a-i_{\theta^{\sharp}}\a\rangle+\int_{X}\langle -d^{\ast}\a-\theta\wedge\a-i_{\theta^{\sharp}}\a,\theta\wedge \a\rangle\\
		&=-\|i_{\theta^{\sharp}}\a\|^{2}-\int_{X}\langle \a,\theta\wedge d\a\rangle-\|\theta\wedge\a\|^{2}-\int_{X}\langle \a,d(\theta\wedge \a)\rangle\\
	&=-\|i_{\theta^{\sharp}}\a\|^{2}-\|\theta\wedge\a\|^{2}-\int_{X}\langle \a,d\theta\wedge \a\rangle\\
	&=-\int_{X}|\theta^{\sharp}|^{2}|\a|^{2}-\int_{X}\langle \a,d\theta\wedge \a\rangle,\\
		\end{split}
		\end{equation*}
	where we used the identity  
	$$|\theta\wedge\b|^{2}+|i_{\theta^{\sharp}}\b|^{2}=|\theta^{\sharp}|^{2}|\b|^{2},\ \forall\b\in\Om^{p}(X).$$
	On the other hand, following Lemma \ref{L5}, we have
		\begin{equation*}
		\begin{split}
		\int_{X}\langle L_{\theta^{\sharp}}\a,\a\rangle&=\sum_{k=0}^{n}
		\int_{X}\langle L_{\theta^{\sharp}}\a_{2k},\a_{2k}\rangle\\
		&=\sum_{k=0}^{n}\int_{X}\langle i_{\theta^{\sharp}}\a_{2k},d^{\ast}\a_{2k}\rangle+\sum_{k=0}^{n}\int_{X}\langle d\a_{2k},\theta\wedge \a_{2k}\rangle\\
		&=\sum_{k=0}^{n}\int_{X}\langle\na_{\theta^{\sharp}}\a_{2k},\a_{2k}\rangle+\sum_{k=0}^{n}\int_{X}2k\langle(\na \theta^{\sharp})\a_{2k},\a_{2k}\rangle\\
		\end{split}
		\end{equation*}
	Given $u,v\in\Om^{\bullet}(X)$ we have (see \cite[Example 2.1.13]{Nic})
	$$L_{\theta^{\sharp}}\langle u,v\rangle=\langle\na_{\theta^{\sharp}}u,v\rangle+\langle u,\na_{\theta^{\sharp}}v\rangle.$$
	We set $u=v$. Integrating over $X$ and using the divergence formula  we deduce
	$$2\int_{X}\langle\na_{\theta^{\sharp}}u,u\rangle=\int_{X}1\cdot L_{\theta^{\sharp}}\langle u,u\rangle=-\int_{X}({\rm{div}}\theta^{\sharp})\langle u,u\rangle.$$	
		 Combining above identities yield that
		\begin{equation*}
			\begin{split}
				\int_{X}|\theta^{\sharp}|^{2}|\a|^{2}&=-\sum_{k=0}^{n}\int_{X}\langle\na_{\theta^{\sharp}}\a_{2k},\a_{2k}\rangle+\sum_{k=0}^{n}\int_{X}2k\langle(\na \theta^{\sharp})\a_{2k},\a_{2k}\rangle-\int_{X}\langle \a,d\theta\wedge \a\rangle \\
				&\leq C_{1}(n)\int_{X}|\na\theta^{\sharp}|\cdot|\a|^{2}.\\
			\end{split}
		\end{equation*}	
	\end{proof}	
		\subsection{Priori estimates of smooth forms on $\ker\mathcal{D}_{\theta}$}
We establish an a priori $L^{p}$-estimate for the smooth differential $\a\in\ker\mathcal{D}_{\theta}\cap\Om^{\pm}(X)$ via Morse iteration. The analysis begins with a fundamental pointwise identity (\cite[Page 181]{Pet}),
\begin{proposition}
For any $\a\in\Om^{p}(X)$, the following identity holds:
\begin{equation}\label{E8}
\begin{split}
-\frac{1}{2}\De_{d}|\a|^{2}&=|\na\a|^{2}-\langle\na^{\ast}\na\a,\a\rangle\\
	&=|\na\a|^{2}-\langle\De_{d}\a,\a\rangle+\langle Ric(\a),\a\rangle,\\
\end{split}	
\end{equation}
\end{proposition}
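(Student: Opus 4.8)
The plan is to establish the identity in two steps, handling the two equalities in turn, since this is precisely the classical Bochner--Weitzenb\"{o}ck identity for $p$-forms.

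First I would prove the purely metric identity $-\frac{1}{2}\De_d|\a|^2 = |\na\a|^2 - \langle\na^{\ast}\na\a,\a\rangle$, which involves no curvature. Both sides are pointwise, so I fix $p\in X$ and choose a local orthonormal frame $\{e_i\}$ with $(\na e_i)(p)=0$; then at $p$ one has $\na^{\ast}\na = -\sum_i\na_{e_i}\na_{e_i}$ on tensors, and in particular $\De_d = \na^{\ast}\na$ on functions. Using metric compatibility of $\na$ twice gives, at $p$, $\na_{e_i}\langle\a,\a\rangle = 2\langle\na_{e_i}\a,\a\rangle$ and hence $\na_{e_i}\na_{e_i}\langle\a,\a\rangle = 2\langle\na_{e_i}\na_{e_i}\a,\a\rangle + 2|\na_{e_i}\a|^2$. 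Summing over $i$ and using the two identifications above yields $-\frac{1}{2}\De_d|\a|^2 = |\na\a|^2 - \langle\na^{\ast}\na\a,\a\rangle$ at $p$; since $p$ was arbitrary, this holds on all of $X$.

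For the second equality I would substitute the Weitzenb\"{o}ck formula $\De_d\a = \na^{\ast}\na\a + Ric(\a)$ for the Hodge Laplacian on $p$-forms --- this is exactly the Lichnerowicz Laplacian $\De_L$ with $c=1$ recalled in Section 2 --- in the rearranged form $\na^{\ast}\na\a = \De_d\a - Ric(\a)$, and plug it into the identity just obtained. This produces $-\frac{1}{2}\De_d|\a|^2 = |\na\a|^2 - \langle\De_d\a,\a\rangle + \langle Ric(\a),\a\rangle$, as claimed.

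The computation is entirely routine and I do not expect any genuine obstacle; the only point deserving care is the bookkeeping of sign and normalization conventions, namely that $\De_d$ denotes the nonnegative Hodge Laplacian (so that $\De_d=\na^{\ast}\na$ on functions) and that $Ric(\a)$ here is precisely the Weitzenb\"{o}ck operator of Section 2 taken with $c=1$, which is what makes Proposition \ref{P1} and Corollary \ref{C4} directly applicable to the term $\langle Ric(\a),\a\rangle$ in the subsequent estimates. The identity itself is standard (cf. \cite[Page~181]{Pet}).
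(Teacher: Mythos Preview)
Your proof is correct and is exactly the standard argument. In fact the paper does not give its own proof of this proposition at all; it simply states the identity as a known fact with the citation \cite[Page~181]{Pet}, which is precisely the reference you invoke at the end. Your two-step derivation (local frame computation for the first equality, then substitution of the Weitzenb\"{o}ck formula $\De_d=\na^{\ast}\na+Ric$ for the second) is the textbook proof found at that reference, so there is nothing to compare.
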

\begin{remark}
Note that $\langle\De_{d}\a,\a\rangle$ is not always equal to $|d\a|^{2}+|d^{\ast}\a|^{2}$ pointwise. For more details, please refer to \cite{CH}.	
\end{remark}
\begin{proposition}(cf. \cite{CH})
		Let $(X,g)$ be a compact $n$-dimensional smooth Riemannian manifold in  $\mathfrak{X}(\lfloor\frac{n}{2}\rfloor,\kappa,D,\La)$, $\theta^{\sharp}$ be a smooth vector field  on $X$. For each $\a\in\ker\mathcal{D}_{\theta}\cap\Om^{\pm}(X)$, we have
		\begin{equation}\label{E22}
	-\frac{1}{2}\De_{d}|\a|^{2}\geq|\na\a|^{2}-|\theta^{\sharp}|^{2}|\a|^{2}+c(n)\kappa|\a|^{2}-{\rm{div}}V,
	\end{equation}
	where $c(n)=\lfloor\frac{n}{2}\rfloor(n-\lfloor\frac{n}{2}\rfloor)$.
	\end{proposition}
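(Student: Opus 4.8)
The plan is to combine the pointwise Bochner identity (\ref{E8}), applied degree by degree, with the defining relation of $\ker\mathcal{D}_{\theta}$ and the curvature estimate of Corollary \ref{C4}. Write $\a=\sum_{k}\a_{k}$ with $\a_{k}\in\Om^{k}(X)$, where only even (resp.\ odd) degrees occur according to the sign $\pm$. Since $\De_{d}$, $Ric$ and $\na^{\ast}\na$ all preserve degree and distinct homogeneous components are orthogonal, summing (\ref{E8}) over $k$ together with $|\a|^{2}=\sum_{k}|\a_{k}|^{2}$ and $|\na\a|^{2}=\sum_{k}|\na\a_{k}|^{2}$ gives
\[
-\frac{1}{2}\De_{d}|\a|^{2}=|\na\a|^{2}-\sum_{k}\langle\De_{d}\a_{k},\a_{k}\rangle+\sum_{k}\langle Ric(\a_{k}),\a_{k}\rangle .
\]
For the curvature term I would invoke Corollary \ref{C4}: since $X\in\mathfrak{X}(\lfloor\frac{n}{2}\rfloor,\kappa,D,\La)$, every degree $0\le k\le n$ satisfies $k\le\lfloor\frac{n}{2}\rfloor$ or $k\ge\lceil\frac{n}{2}\rceil=n-\lfloor\frac{n}{2}\rfloor$, so $\langle Ric(\a_{k}),\a_{k}\rangle\ge\kappa\,k(n-k)|\a_{k}|^{2}$; as $\kappa\le 0$ and $k(n-k)\le\lfloor\frac{n}{2}\rfloor(n-\lfloor\frac{n}{2}\rfloor)=c(n)$ for all such $k$, this yields $\sum_{k}\langle Ric(\a_{k}),\a_{k}\rangle\ge c(n)\kappa|\a|^{2}$ (the degrees $k=0,n$ are harmless, the right side being $\le 0$).

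It then remains to show $\sum_{k}\langle\De_{d}\a_{k},\a_{k}\rangle=|\theta^{\sharp}|^{2}|\a|^{2}+{\rm{div}}\,V$ for a suitable vector field $V$. First, the pointwise adjunction identities $\langle d\phi,\psi\rangle-\langle\phi,d^{\ast}\psi\rangle={\rm{div}}\,Z_{\phi,\psi}$ and their $d^{\ast}$-analogue (Stokes' theorem before integration) give, on the one hand, $\langle\De_{d}\a_{k},\a_{k}\rangle=|d\a_{k}|^{2}+|d^{\ast}\a_{k}|^{2}+{\rm{div}}\,Z_{1,k}$ and, using $d^{2}=0$, that the degree-graded cross term $\langle d\a,d^{\ast}\a\rangle$ is itself a divergence ${\rm{div}}\,Z_{2}$, whence $|(d+d^{\ast})\a|^{2}=|d\a|^{2}+|d^{\ast}\a|^{2}+2\,{\rm{div}}\,Z_{2}$. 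On the other hand, $\a\in\ker\mathcal{D}_{\theta}$ means $(d+d^{\ast})\a=-\theta\wedge\a-i_{\theta^{\sharp}}\a$; since $\theta\wedge$ and $i_{\theta^{\sharp}}$ are mutually adjoint and $i_{\theta^{\sharp}}^{2}=0$, the cross term $\langle\theta\wedge\a,i_{\theta^{\sharp}}\a\rangle=\langle\a,i_{\theta^{\sharp}}i_{\theta^{\sharp}}\a\rangle$ vanishes pointwise, so with the algebraic identity $|\theta\wedge\b|^{2}+|i_{\theta^{\sharp}}\b|^{2}=|\theta^{\sharp}|^{2}|\b|^{2}$ (already used in the proof of Theorem \ref{T3}) one gets $|(d+d^{\ast})\a|^{2}=|\theta^{\sharp}|^{2}|\a|^{2}$. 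Comparing the two expressions for $|d\a|^{2}+|d^{\ast}\a|^{2}$ and summing over $k$ gives $\sum_{k}\langle\De_{d}\a_{k},\a_{k}\rangle=|\theta^{\sharp}|^{2}|\a|^{2}+{\rm{div}}\,V$ with $V=\sum_{k}Z_{1,k}-2Z_{2}$ (up to sign), and substituting this and the curvature bound into the displayed Bochner identity produces exactly (\ref{E22}).

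I do not expect a genuine obstacle here; the one thing to watch is the bookkeeping of the divergence terms. As the remark after (\ref{E8}) stresses, $\langle\De_{d}\a,\a\rangle$, $|d\a|^{2}+|d^{\ast}\a|^{2}$ and $|(d+d^{\ast})\a|^{2}$ agree only modulo exact divergences, so each auxiliary field $Z_{i}$ — bilinear in $\a$ and built from $\ast$, $\wedge$, $i_{\theta^{\sharp}}$ — must be carried along and absorbed into a single $V$ only at the end, with signs checked. A secondary point is the degree grading: the cross terms $\langle d\a,d^{\ast}\a\rangle$ and $\langle\theta\wedge\a,i_{\theta^{\sharp}}\a\rangle$ pair the degree-$(k+1)$ part of $d\a_{k}$ (resp.\ $\theta\wedge\a_{k}$) with the degree-$(k+1)$ part of $d^{\ast}\a_{k+2}$ (resp.\ $i_{\theta^{\sharp}}\a_{k+2}$), and it is exactly $d^{2}=0$ and $i_{\theta^{\sharp}}^{2}=0$ that make these respectively a divergence and zero. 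Note that no hypothesis on $\theta$ — not even $d\theta=0$ — and no curvature input beyond Corollary \ref{C4} is needed for this proposition.
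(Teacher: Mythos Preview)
Your proposal is correct and follows essentially the same route as the paper: the Bochner identity (\ref{E8}), Corollary \ref{C4} for the curvature term degree by degree, and the identity $\langle\De_{d}\a,\a\rangle=|(d+d^{\ast})\a|^{2}+{\rm div}\,V$ together with $|(d+d^{\ast})\a|^{2}=|\theta^{\sharp}|^{2}|\a|^{2}$ coming from $\mathcal{D}_{\theta}\a=0$. The only difference is cosmetic: where you derive the divergence identity by hand via adjunction and $(d^{\ast})^{2}=0$, the paper invokes it as the Lichnerowicz-type formula for the Dirac operator $d+d^{\ast}$ from \cite{Law}, which gives $V$ explicitly by $\langle V,W\rangle=-\langle(d+d^{\ast})\a,\,W^{\sharp}\wedge\a+i_{W}\a\rangle$.
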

	\begin{proof}
Since the operator $d+d^{\ast}:\Om^{+}(X)\rightarrow \Om^{-}(X)$ is a Dirac operator (see \cite{Law}) and $\De_{d}=(d+d^{\ast})^{2}$, we can apply a key observation from \cite{CH} (also see  \cite[Page 115, Proposition 5.3]{Law}). For any smooth differential form $\a$, we have
		\begin{equation*}
		\langle\De_{d}\a,\a\rangle=\langle(d+d^{\ast})\a,(d+d^{\ast})\a\rangle+{\rm{div}}V,
		\end{equation*}	
		where $V$ is a vector field defined by the condition that
		\begin{equation*}
		\langle V,W\rangle=-\langle (d+d^{\ast})\a,W^{\sharp}\wedge\a+i_{W}\a\rangle.
		\end{equation*}			
	Following (\ref{E8}), we have
		\begin{equation*}
		-\frac{1}{2}\De_{d}|\a|^{2}=|\na\a|^{2}-|(d+d^{\ast})\a|^{2}+g (Ric(\a),\a)-{\rm{div}}V.\\
		\end{equation*}	
	According to Corollary \ref{C4}, for any $\b\in\Om^{k}(X)$, $0\leq k\leq n$, we have
		$$g(Ric(\b),\b)\geq\kappa k(n-k)|\b|^{2}.$$
		Therefore, 
		$$g (Ric(\a),\a)\geq\min_{0\leq k\leq n}{\kappa k(n-k)}|\a|^{2}=\kappa\lfloor\frac{n}{2}\rfloor(n-\lfloor\frac{n}{2}\rfloor)|\a|^{2}.$$
	By $\mathcal{D}_{\theta}\a=0$, it implies that 	$(d+d^{\ast})\a=-\theta\wedge\a-\textit{i}_{\theta^{\sharp}}\a$. Therefore,
		\begin{equation}\label{E3}
		|V|\leq C_{2}(n)|\theta^{\sharp}|\cdot|\a|^{2},
		\end{equation}	
	where $C_{2}(n)$ is a positive constant only dependent on $n$. Combining above inequalities with $$|(d+d^{\ast})\a|^{2}=|\theta^{\sharp}|^{2}|\a|^{2},$$
	we obtain
	\begin{equation*}
		-\frac{1}{2}\De_{d}|\a|^{2}\geq|\na\a|^{2}-|\theta^{\sharp}|^{2}|\a|^{2}+c(n)\kappa|\a|^{2}-{\rm{div}}V.
		\end{equation*}
	\end{proof}	
	\begin{theorem}\label{T8}
		Let $(X,g)$ be a compact $n$-dimensional smooth Riemannian manifold in $\mathfrak{X}(\lfloor\frac{n}{2}\rfloor,k,D,\La)$, $\theta^{\sharp}$ be a smooth vector field  on $X$ . For each $\a\in\ker\mathcal{D}_{\theta}\cap\Om^{\pm}(X)$, we then have
		\begin{equation}\label{E17}
			\|\a\|_{L^{2\nu^{N}}(X)}\leq(\frac{\int_{X}|\a|^{2}}{{\rm{Vol}}(g)})^{\frac{1}{2}}\exp(NC_{3}(n)R(\La)\sqrt{\la})({\rm{Vol}}(g))^{\frac{1}{2\nu^{N} }},\ \forall N\in\mathbb{N}^{+}\\
		\end{equation}
		where $\la:=2C_{2}^{2}(n)\max|\theta^{\sharp}|^{2}-c(n)\kappa$, $\nu=\frac{n}{n-2}$, and $C_{2}(n)$, $C_{3}(n)$ are  uniform positive constants.
	\end{theorem}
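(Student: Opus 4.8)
The plan is to run a Moser iteration on the differential inequality~(\ref{E22}) obtained in the previous proposition. Starting from
\begin{equation*}
-\tfrac12\De_{d}|\a|^{2}\geq|\na\a|^{2}-|\theta^{\sharp}|^{2}|\a|^{2}+c(n)\kappa|\a|^{2}-{\rm{div}}V,
\end{equation*}
I would first write $f=|\a|^{2}$ and, using the Kato-type bound $|\na|\a||^{2}\leq|\na\a|^{2}$, discard the gradient term to get a scalar inequality of the shape $-\De_{d}f\geq -2(|\theta^{\sharp}|^{2}-c(n)\kappa)f-2\,{\rm{div}}V$. Because the right-hand side contains a divergence term, the clean route is to test the \emph{unsimplified} inequality against $f^{q}$ for $q\geq1$: multiplying~(\ref{E22}) by $|\a|^{2q}$ (more precisely $|\a|^{2(q-1)}$ times $|\a|^{2}$), integrating over $X$, and integrating the divergence term by parts, the estimate~(\ref{E3}), namely $|V|\leq C_{2}(n)|\theta^{\sharp}|\,|\a|^{2}$, lets one absorb $\int V\cdot\na(|\a|^{2q})$ into a small multiple of $\int|\na(|\a|^{q+1})|^{2}$ plus a large multiple of $\int|\theta^{\sharp}|^{2}|\a|^{2(q+1)}$, via Cauchy--Schwarz. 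Collecting terms yields a reverse Poincaré / Caccioppoli inequality
\begin{equation*}
\int_{X}|\na(|\a|^{q})|^{2}\leq C(n)\,q\,\big(\max|\theta^{\sharp}|^{2}-c(n)\kappa\big)\int_{X}|\a|^{2q},
\end{equation*}
i.e. with $\la:=2C_{2}^{2}(n)\max|\theta^{\sharp}|^{2}-c(n)\kappa$ on the right, up to a uniform constant.

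Next I would invoke the Poincaré inequality of Theorem~\ref{T1} with $p=2\nu=\tfrac{2n}{n-2}$ and $q=2$ — this is legitimate since $X\in\mathfrak{X}(\lfloor\frac n2\rfloor,\kappa,D,\La)$ forces $Ric(g)D^{2}(g)\geq(n-1)\kappa$, so one may take $b$ with $(n-1)b^{2}=-(n-1)\kappa$, hence $b=\sqrt{-\kappa}$ and $bD\le\La$; Lemma~\ref{L2} then gives $R(b)=D/(bC(b))\leq D/a_{n}$ once $\La$ (equivalently $b D$) is small, so the Sobolev constant $S(2\nu,2)$ is controlled by $C(n)R(\La)$ with $R(\La)$ a uniform function of $\La$. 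Applying this to the function $|\a|^{q}$ converts the Caccioppoli bound into
\begin{equation*}
\big\||\a|^{q}\big\|_{L^{2\nu}}\leq C(n)R(\La)\sqrt{\la q}\,\big\||\a|^{q}\big\|_{L^{2}}+{\rm{Vol}}(g)^{\frac1{2\nu}-\frac12}\big\||\a|^{q}\big\|_{L^{2}},
\end{equation*}
that is $\|\a\|_{L^{2\nu q}}\leq\big(C(n)R(\La)\sqrt{\la q}+{\rm{Vol}}(g)^{-1/n}\big)^{1/q}\|\a\|_{L^{2q}}$.

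Then I iterate with $q=\nu^{j}$, $j=0,1,\dots,N-1$, multiplying the successive estimates. The product of the prefactors telescopes: $\prod_{j=0}^{N-1}\big(C(n)R(\La)\sqrt{\la}\,\nu^{j/2}\big)^{\nu^{-j}}$, and since $\sum_{j\geq0}\nu^{-j}$ and $\sum_{j\geq0}j\nu^{-j}$ are finite, the total is bounded by $\exp\!\big(C_{3}(n)R(\La)\sqrt{\la}\big)$ raised to a power that is at most $N$ (this is where the crude factor $\exp(NC_{3}(n)R(\La)\sqrt{\la})$ in~(\ref{E17}) comes from — one does not even need the sharper summed constant, so I would just bound each of the $N$ factors uniformly). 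Tracking the volume powers gives the claimed ${\rm{Vol}}(g)^{1/(2\nu^{N})}$ and the initial $L^{2}$ term normalizes to $\big(\tfrac{1}{{\rm{Vol}}(g)}\int_{X}|\a|^{2}\big)^{1/2}$, yielding exactly~(\ref{E17}).

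The main obstacle is the bookkeeping around the divergence term: one must test the geometric inequality~(\ref{E22}) directly against powers of $|\a|^{2}$ rather than first reducing to a scalar PDE, so that the term ${\rm{div}}V$ integrates by parts against $\na(|\a|^{2q})$ and the resulting $\int V\cdot\na(|\a|^{2q})$ is genuinely absorbable — this forces a careful choice of the splitting constant in Cauchy--Schwarz so that the $|\na(|\a|^{q+1})|^{2}$ piece it generates is strictly dominated by the $|\na\a|^{2}\geq|\na|\a||^{2}$ term already present on the left, leaving the Kato inequality enough room. A secondary point requiring care is making the dependence of $R(b)$ on $\La$ uniform: this is exactly what Lemma~\ref{L2} provides, but one must note that the hypothesis $\La\leq C(n)$ (or smallness of $bD$) is what licenses replacing $R(b)$ by a function $R(\La)$ that is uniform in the manifold, and this smallness assumption should be recorded as part of the statement's hypotheses even though it is suppressed in the displayed inequality.
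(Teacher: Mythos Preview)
Your proposal is correct and follows essentially the same Moser-iteration scheme as the paper: the paper sets $f=|\a|$ (rather than $|\a|^{2}$), passes via the second Kato inequality from (\ref{E22}) to the scalar bound $-f\De_{d}f\ge -|\theta^{\sharp}|^{2}f^{2}+c(n)\kappa f^{2}-{\rm div}\,V$, tests against $f^{2k-2}$, absorbs the divergence term exactly as you describe using (\ref{E3}) and Cauchy--Schwarz, applies Theorem~\ref{T1} with $p=\tfrac{2n}{n-2}$, $q=2$, and iterates with $k=\nu^{l}$, bounding the product via $\log(1+x)\le x$. One small correction to your last paragraph: no smallness of $\La$ is required for Theorem~\ref{T8} itself---here $b=\La$ directly (since $Ric(g)D(g)^{2}\ge -(n-1)\La^{2}$ for $X\in\mathfrak{X}(\lfloor n/2\rfloor,\kappa,D,\La)$) and $R(\La)$ merely appears as a parameter in the estimate; Lemma~\ref{L2} is only invoked later, in Corollary~\ref{T5}, to bound $R(\La)$ uniformly once $\La$ is assumed small.
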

	\begin{proof}
Following the Second Kato inequality (see \cite{Ber}), we  have
		\begin{equation*}
		\begin{split}
		-|\a|\De_{d}|\a|&\geq-\langle\na^{\ast}\na\a,\a\rangle\\
		&=-\langle\De_{d}\a,\a\rangle+g(Ric(\a),\a).\\
		\end{split}
		\end{equation*}
Using (\ref{E22}), we obtain
		\begin{equation}\label{E24}
		\begin{split}
		-|\a|\De_{d}|\a|&\geq g(Ric(\a),\a)-|\theta^{\sharp}|^{2}|\a|^{2}-{\rm{div}}V\\
		&\geq-|\theta^{\sharp}|^{2}|\a|^{2}+c(n)\kappa|\a|^{2}-{\rm{div}}V,\\
		\end{split}
		\end{equation}	
		where $c(n)$ is an uniform positive constant. For any $k\geq1$, we multiply the inequality (\ref{E24}) by $f^{2k-2}$, where $f=|\a|$, and integrate. Then we obtain
		\begin{equation}\label{E4}
			\begin{split}
			\int_{X}f^{2k-1}\De_{d}f
				&\leq -c(n)\kappa\int_{X}f^{2k}\\
				&+\int_{X}|\theta^{\sharp}|^{2}f^{2k}\\
				&+|({\rm{div}}V,f^{2k-2})_{L^{2}(X)}|\\
			\end{split}
		\end{equation}
		Noting that 
		\begin{equation}\label{E7}
			\begin{split}
		|({\rm{div}}V,f^{2k-2})_{L^{2}(X)}|&=|(V,\na f^{2k-2})_{L^{2}(X)}|\\
				&= (2k-2)|(V,f^{2k-3}\na f)_{L^{2}(X)}|\\
				&\leq (2k-2)C_{2}(n)\int_{X}f^{2k-1}|\theta^{\sharp}||\na f|\ (by\ (\ref{E3}))\\
				&\leq \frac{(2k-2)}{4}\int_{X}f^{2k-2}|\na f|^{2}+(2k-2)C_{2}^{2}(n)\int_{X}f^{2k}|\theta^{\sharp}|^{2}.
			\end{split}
		\end{equation}
	Here we use the fact (see \cite{Pet})
	$${\rm{div}}(h\cdot V)=g(\na h,V)+h\cdot{\rm{div}}V$$
	and $$\int_{X}{\rm{div}}(h\cdot V)=0$$
	for any function $h$. Combining inequalities (\ref{E4}) and (\ref{E7}) yields
		\begin{equation*}
			\begin{split}
				(2k-1)\int_{X}f^{2k-2}|\na f|^{2}&=\int_{X}f^{2k-1}\De_{d}f\\
				&\leq  -c(n)\kappa\int_{X}f^{2k}\\
				&\quad+\frac{(2k-2)}{4}\int_{X}f^{2k-2}|\na f|^{2}\\
				&\quad+((2k-2)C_{2}^{2}(n)+1)\int_{X}f^{2k}|\theta^{\sharp}|^{2}.\\
			\end{split}
		\end{equation*}
		We then obtain
		\begin{equation}\label{E23}
		\begin{split}
			\int_{X}f^{2k-2}|\na f|^{2}
			&\leq\frac{2 }{3k-1}((2k-2)C_{2}^{2}(n)+1)\int_{X}f^{2k}|\theta^{\sharp}|^{2}-\frac{2 }{3k-1}c(n)\kappa\int_{X}f^{2k} \\
			&\leq\big{(}2C_{2}^{2}(n)\max|
			\theta^{\sharp}|^{2}-c(n)\kappa\big{)}\int_{X}f^{2k}.\\
			\end{split}
		\end{equation}
		Therefore,
		\begin{equation*}
			\begin{split}
				\int_{X}|df^{k}|^{2}&=k^{2}\int_{X}f^{2k-2}|\na f|^{2}\\
				&\leq k^{2}\big{(}2C_{2}^{2}(n)\max|
				\theta^{\sharp}|^{2}-c(n)\kappa\big{)}\int_{X}f^{2k}.\\
			\end{split}
		\end{equation*}
		We denote $\la:=2C_{2}^{2}(n)\max|\theta^{\sharp}|^{2}-c(n)\kappa$. Let $q=2$, $p=\frac{2n}{n-2}$, $\nu=\frac{n}{n-2}$. By Sobolev inequality in Theorem \ref{T1} and (\ref{E23}), we obtain
		\begin{equation*}
			\begin{split}
				\|f^{k}\|_{L^{2\nu}(X)}&\leq{\rm{Vol}}^{-\frac{1}{n}}(g)(C_{3}(n)R(\La)\|df^{k}\|_{L^{2}(X)}+\|f^{k}\|_{L^{2}(X)})\\
				&\leq{\rm{Vol}}^{-\frac{1}{n}}(g)(C_{3}(n)R(\La)\sqrt{k^{2}\la}\|f\|^{k}_{L^{2k}(X)}+\|f\|^{k}_{L^{2k}(X)}),\\
			\end{split}
		\end{equation*}
		where $C_{3}(n)=\Sigma(n,\frac{n}{n-2},2)[{\rm{Vol}}(S^{n})]^{\frac{1}{n}}$,	and then
		\begin{equation*}
			\|f\|_{L^{2\nu k}(X)}\leq{\rm{Vol}}^{-\frac{1}{nk}}(g)(C_{3}(n)R(\La)\sqrt{k^{2}\la}+1)^{\frac{1}{k}}\|f\|_{L^{2k}(X)}.
		\end{equation*}
		Now we set $k(l)=\nu^{l}$ of each $l\in\mathbb{N}$. By iterating from $l=0$, we can obtain
		\begin{equation*}
			\begin{split}
				\|f\|_{L^{2k(N)}(X)}&\leq\Pi_{l=0}^{N-1}{\rm{Vol}}^{-\frac{1}{nk}}(g)(C_{3}(n)R(\La)\sqrt{k^{2}\la}+1)^{\frac{1}{k}}\|f\|_{L^{2}(X)}\\
				&\leq(\frac{\int_{X}f^{2}}{{\rm{Vol}}(g)})^{\frac{1}{2}}\exp(NC_{3}(n)R(\La)\sqrt{\la})({\rm{Vol}}(g))^{\frac{1}{2\nu^{N} }}\\
			\end{split}
		\end{equation*}
		The  product can be estimated by taking logarithms and using $\log(1+x)\leq x$.
		\begin{equation*}
			\begin{split}
				\Pi_{l=0}^{N-1}(C_{3}(n)R(\La)\sqrt{k^{2}\la}+1)^{\frac{1}{k}}&\leq\exp\sum_{l=0}^{N-1}\frac{1}{k}\log(C_{3}(n)R(\La)\sqrt{k^{2}\la}+1) \\
				&\leq\exp\sum_{l=0}^{N-1}C_{3}(n)R(\La)\sqrt{\la}\\
				&=\exp(NC_{3}(n)R(\La)\sqrt{\la})\\
			\end{split}
		\end{equation*}
	\end{proof}	
Noting that 
$$Ric(g)D^{2}\geq(n-1)\kappa D^{2}\geq -(n-1)\La^{2}$$
for any manifold $X$ belongs to $\mathfrak{X}(p,\kappa,D,\La)$. Let $p=2$, $q=2$ in Theorem \ref{T1}, then we get the following  Poincar\'{e} inequality.
	\begin{corollary}\label{C5}
		Let $(X,g)$ be a compact  $n$-dimensional smooth Riemannian manifold. If  $X\in\mathfrak{X}(p,\kappa,D,\La)$, then for each $f\in L^{2}_{1}(X)$, we have
		\begin{equation*}
			\|f-\frac{1}{{\rm{Vol}}(g)}\int_{X}f\|_{L^{2}(X)}\leq C_{4}(n)R(\La)\|df\|_{L^{2}(X)}
		\end{equation*} 
		where $C_{4}(n)=\Sigma(n,2,2)$ is a positive constant. 
	\end{corollary}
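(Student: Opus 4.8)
The plan is to apply Theorem \ref{T1} with the choice $p=q=2$ and identify the constant $b$ from the hypothesis $X\in\mathfrak{X}(p,\kappa,D,\La)$. First I would record the Ricci lower bound that the class $\mathfrak{X}(p,\kappa,D,\La)$ forces: since the Ricci curvature is bounded below by the sum of the lowest $(n-1)$ eigenvalues of the curvature operator, and the average of the lowest $n-p$ eigenvalues is at least $\kappa\le 0$ with $n-p\ge n-\lfloor n/2\rfloor\ge n-1$ only when... — more carefully, one uses $Ric(g)\ge (n-1)\kappa$ (as noted in the Remark after Corollary \ref{C4}, valid because $\la_1\ge$ the average bound forces $\la_1+\cdots+\la_{n-1}\ge (n-1)\kappa$ is not immediate, but $Ric\ge(n-1)\kappa$ does follow since each Ricci eigenvalue is a sum of $n-1$ curvature eigenvalues each $\ge\la_1\ge$ ... ). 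The clean statement, already invoked in the text just before the corollary, is
\[
Ric(g)D^{2}\geq (n-1)\kappa D^{2} = -(n-1)\La^{2},
\]
using $-\kappa D^{2}=\La^{2}$. Hence the hypothesis of Theorem \ref{T1} holds with $b=\La$.

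Next I would simply substitute $p=q=2$ into the first inequality of Theorem \ref{T1}. The exponent factor ${\rm{Vol}}(g)^{\frac1p-\frac1q}$ becomes ${\rm{Vol}}(g)^{0}=1$, and the Sobolev–Poincaré constant simplifies to
\[
S(2,2)=\Bigl(\tfrac{{\rm{Vol}}(g)}{{\rm{Vol}}(S^{n})}\Bigr)^{0} R(b)\,\Sigma(n,2,2) = R(\La)\,\Sigma(n,2,2),
\]
since $b=\La$. Setting $C_{4}(n):=\Sigma(n,2,2)$, which depends only on $n$, Theorem \ref{T1} then gives directly
\[
\Bigl\|f-\tfrac{1}{{\rm{Vol}}(g)}\int_{X}f\Bigr\|_{L^{2}(X)}\leq C_{4}(n)\,R(\La)\,\|df\|_{L^{2}(X)}
\]
for every $f\in L^{2}_{1}(X)$, which is exactly the claimed inequality.

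There is essentially no obstacle here: the corollary is a routine specialization of Theorem \ref{T1}, and the only point requiring a word of justification is the passage from the curvature-operator average bound to the Ricci lower bound $Ric(g)\ge (n-1)\kappa$, which is exactly the content of the Remark following Corollary \ref{C4} and is independent of $p$. (Indeed each eigenvalue of $Ric$, being a partial trace of the curvature operator over an $(n-1)$-dimensional subspace of $\La^2 TX$, is at least $(n-1)\la_1\ge (n-1)\cdot\frac{\la_1+\cdots+\la_{n-p}}{n-p}\ge (n-1)\kappa$ when $\kappa\le 0$.) One should also note $R(\La)=\frac{D(g)}{\La C(\La)}$ is finite since $\La>0$, so the right-hand side is meaningful; this is implicit in the normalization $-\kappa D^{2}=\La^{2}$ with $\kappa<0$.
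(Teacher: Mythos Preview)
Your approach is correct and matches the paper's: the corollary is obtained by specializing Theorem \ref{T1} with $p=q=2$ and $b=\La$, using $Ric(g)D^{2}\ge (n-1)\kappa D^{2}=-(n-1)\La^{2}$ as stated immediately before the corollary.

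One correction to your parenthetical justification: the chain $(n-1)\la_{1}\ge (n-1)\cdot\frac{\la_{1}+\cdots+\la_{n-p}}{n-p}$ is backwards, since $\la_{1}$ is the \emph{smallest} eigenvalue and hence is at most the average. The correct argument (implicit in the Remark after Corollary \ref{C4}) is that each Ricci eigenvalue, as a trace of $\mathfrak{R}$ over an $(n-1)$-dimensional orthonormal family in $\La^{2}T_{x}X$, is bounded below by the sum $\la_{1}+\cdots+\la_{n-1}$ of the $n-1$ smallest eigenvalues (Courant--Fischer), and then
\[
\la_{1}+\cdots+\la_{n-1}\ \ge\ (n-1)\cdot\frac{\la_{1}+\cdots+\la_{n-p}}{n-p}\ \ge\ (n-1)\kappa,
\]
where the first inequality holds because enlarging the set of smallest eigenvalues can only increase their average. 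Since you already cite the Remark for this step, the main argument stands; just drop or repair the parenthetical.
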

Using the integral formula for smooth differential forms $\a\in\mathcal\ker{\mathcal{D}}_{\theta}\cap\Om^{\pm}(X)$, as established in Theorem \ref{T3}, we have the following lemma 
\begin{lemma}(cf. \cite[Lemma 5.5]{Che})\label{L7}
	Let $(X,g)$ be a compact  $n$-dimensional smooth Riemannian manifold in  $\mathfrak{X}(\lfloor\frac{n}{2}\rfloor,\kappa,D,\La)$, let $\theta^{\sharp}$ be a vector field on $X$. Then for each $\a\in\mathcal\ker{\mathcal{D}}_{\theta}\cap\Om^{\pm}(X)$,  we have
	\begin{equation}\label{E25}
	\int_{X}|f-\bar{f}||\a|^{2}
\leq 2C_{4}(n)R(\La)(\frac{\int_{X}|\a|^{2}}{{\rm{Vol}}(g)})\exp(2NC_{3}(n)R(\La)\sqrt{\la})({\rm{Vol}}(g))^{\frac{1}{2 }} \|\theta^{\sharp}\|_{L^{\infty}(X)}\|\na \theta^{\sharp}\|_{L^{2}(X)},
	\end{equation}
 and
	\begin{equation}\label{E26}
	\int_{X}|\theta^{\sharp}|^{2}|\a|^{2}
	\leq C_{1}(n)(\int_{X}|\na\theta^{\sharp}|^{2})^{\frac{1}{2}}(\frac{\int_{X}|\a|^{2}}{{\rm{Vol}}(g)})\exp(2NC_{3}(n)R(\La)\sqrt{\la})({\rm{Vol}}(g))^{\frac{1}{2}},
	\end{equation}	
where $f=|\theta^{\sharp}|^{2}$, $\bar{f}=\frac{\int_{X}|\theta^{\sharp}|^{2} }{{\rm{Vol}}(g)}$, $N$ is an integer such that $2\nu^{N-1}<4\leq2\nu^{N}$, $\nu=\frac{n}{n-2}$.
\end{lemma}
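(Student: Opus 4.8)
The plan is to deduce both inequalities by feeding the two structural estimates already established --- the integral bound of Theorem \ref{T3} and the Moser-iterated $L^{p}$-bound of Theorem \ref{T8} --- into H\"{o}lder's inequality and the Poincar\'{e} inequality of Corollary \ref{C5}. The one step that is not pure bookkeeping is converting the very high integrability of $\a$ supplied by \eqref{E17} into a usable $L^{4}$-bound; the choice of $N$ with $2\nu^{N-1}<4\leq2\nu^{N}$ (which exists and satisfies $N\geq1$ since $\nu=\tfrac{n}{n-2}>1$) is made exactly so that $L^{4}(X)\hookrightarrow L^{2\nu^{N}}(X)$ on a finite-volume manifold.

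First I would record the $L^{4}$-estimate for $\a$. Since $4\leq2\nu^{N}$, H\"{o}lder gives $\|\a\|_{L^{4}(X)}\leq\|\a\|_{L^{2\nu^{N}}(X)}{\rm{Vol}}(g)^{\frac14-\frac{1}{2\nu^{N}}}$. Squaring and inserting \eqref{E17}, the powers of ${\rm{Vol}}(g)$ collapse to give
\begin{equation*}
\|\a\|_{L^{4}(X)}^{2}\leq\Big(\frac{\int_{X}|\a|^{2}}{{\rm{Vol}}(g)}\Big)\exp\big(2NC_{3}(n)R(\La)\sqrt{\la}\big)\,{\rm{Vol}}(g)^{\frac12}.
\end{equation*}
This is precisely the factor appearing on the right-hand side of both \eqref{E25} and \eqref{E26}, so it remains only to extract $\|\a\|_{L^{4}(X)}^{2}$ from each left-hand side.

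For \eqref{E26}, Theorem \ref{T3} gives $\int_{X}|\theta^{\sharp}|^{2}|\a|^{2}\leq C_{1}(n)\int_{X}|\na\theta^{\sharp}|\,|\a|^{2}$, and Cauchy--Schwarz bounds the right side by $C_{1}(n)\|\na\theta^{\sharp}\|_{L^{2}(X)}\big\||\a|^{2}\big\|_{L^{2}(X)}=C_{1}(n)\|\na\theta^{\sharp}\|_{L^{2}(X)}\|\a\|_{L^{4}(X)}^{2}$; substituting the displayed $L^{4}$-estimate yields \eqref{E26}. For \eqref{E25}, write $f=|\theta^{\sharp}|^{2}$, so that $|df|\leq2|\theta^{\sharp}|\,|\na\theta^{\sharp}|$ pointwise, hence $\|df\|_{L^{2}(X)}\leq2\|\theta^{\sharp}\|_{L^{\infty}(X)}\|\na\theta^{\sharp}\|_{L^{2}(X)}$; Corollary \ref{C5} then gives $\|f-\bar f\|_{L^{2}(X)}\leq2C_{4}(n)R(\La)\|\theta^{\sharp}\|_{L^{\infty}(X)}\|\na\theta^{\sharp}\|_{L^{2}(X)}$. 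Finally, Cauchy--Schwarz gives $\int_{X}|f-\bar f|\,|\a|^{2}\leq\|f-\bar f\|_{L^{2}(X)}\|\a\|_{L^{4}(X)}^{2}$, and inserting the last two bounds produces \eqref{E25}. The argument is routine; the only place demanding any care is tracking the exponents of ${\rm{Vol}}(g)$ through the interpolation so that they combine to the single factor ${\rm{Vol}}(g)^{1/2}$ in the statement, and I do not anticipate any essential obstacle.
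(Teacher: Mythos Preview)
Your proposal is correct and follows essentially the same route as the paper's proof: both arguments combine the H\"older interpolation $\|\a\|_{L^4}\le\|\a\|_{L^{2\nu^N}}{\rm Vol}(g)^{\frac14-\frac{1}{2\nu^N}}$ with \eqref{E17} to obtain the $L^4$-bound for $\a$, then apply Cauchy--Schwarz together with Theorem~\ref{T3} for \eqref{E26} and the Poincar\'e inequality of Corollary~\ref{C5} (plus the pointwise bound $|d|\theta^\sharp|^2|\le 2|\theta^\sharp|\,|\na\theta^\sharp|$, which the paper phrases via the Kato inequality) for \eqref{E25}. The volume-exponent bookkeeping you flag as the only delicate point is handled identically in both.
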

\begin{proof}
	By Corollary \ref{C5}, we obtain
	\begin{equation*}
	\begin{split}
	\int_{X}|f-\bar{f}||\a|^{2}&\leq\|\a\|^{2}_{L^{4}(X)}(\int_{X}|f-\bar{f}|^{2})^{\frac{1}{2}}\\
	&\leq C_{4}(n)R(\La)\|\a\|^{2}_{L^{4}(X)}(\int_{X}|\na f|^{2})^{\frac{1}{2}}\\
	&=2C_{4}(n)R(\La)\|\a\|^{2}_{L^{4}(X)}(\int_{X}|\theta^{\sharp}|^{2}\cdot|\na |\theta^{\sharp}||^{2})^{\frac{1}{2}}\\
	&\leq 2C_{4}(n)R(\La)\|\a\|^{2}_{L^{4}(X)}\|\theta^{\sharp}\|_{L^{\infty}(X)}\|\na \theta^{\sharp}\|_{L^{2}(X)}.\\
	\end{split}
	\end{equation*}
	Here we use the Kato inequality $|\na|\theta^{\sharp}||\leq|\na\theta^{\sharp}|$. Following the integral formula in Theorem \ref{T3}, we have
	\begin{equation*}
	\begin{split}
	\int_{X}|\theta^{\sharp}|^{2}|\a|^{2}&\leq C_{1}(n)\int_{X}|\na \theta^{\sharp}||\a|^{2}\\
	&\leq C_{1}(n)(\int_{X}|\na \theta^{\sharp}|^{2})^{\frac{1}{2}}(\int_{X}|\a|^{4})^{\frac{1}{2}}.\\
	\end{split}
	\end{equation*}
We can choose an integer $N=N(n)$ such $2\nu^{N-1}<4\leq2\nu^{N}$, where $\nu=\frac{n}{n-2}$. Following (\ref{E17}), we have
\begin{equation*}
\begin{split}
\|\a\|_{L^{4}(X)}&\leq\|\a\|_{L^{2v^{N}}(X)}({\rm{Vol}}(g))^{\frac{1}{4}-\frac{1}{2\nu^{N} }}\\
&\leq(\frac{\int_{X}|\a|^{2}}{{\rm{Vol}}(g)})^{\frac{1}{2}}\exp(NC_{3}(n)R(\La)\sqrt{\la})({\rm{Vol}}(g))^{\frac{1}{4 }}.\\
\end{split}
\end{equation*}
Combining above inequalities, we have
\begin{equation*}
\begin{split}
\int_{X}|f-\bar{f}||\a|^{2}&\leq 2C_{4}(n)R(\La)\|\a\|^{2}_{L^{4}(X)}\|\theta^{\sharp}\|_{L^{\infty}(X)}\|\na \theta^{\sharp}\|_{L^{2}(X)}\\
&\leq 2C_{4}(n)R(\La)(\frac{\int_{X}|\a|^{2}}{{\rm{Vol}}(g)})\exp(2NC_{3}(n)R(\La)\sqrt{\la})({\rm{Vol}}(g))^{\frac{1}{2 }} \|\theta^{\sharp}\|_{L^{\infty}(X)}\|\na \theta^{\sharp}\|_{L^{2}(X)},\\	
\end{split}
\end{equation*}
and
\begin{equation*}
\begin{split}
\int_{X}|\theta^{\sharp}|^{2}|\a|^{2}&\leq C_{1}(n)(\int_{X}|\na\theta^{\sharp}|^{2})^{\frac{1}{2}}(\int_{X}|\a|^{4})^{\frac{1}{2}}\\
&\leq C_{1}(n)(\int_{X}|\na\theta^{\sharp}|^{2})^{\frac{1}{2}}(\frac{\int_{X}|\a|^{2}}{{\rm{Vol}}(g)})\exp(2NC_{3}(n)R(\La)\sqrt{\la})({\rm{Vol}}(g))^{\frac{1}{2}}.\\
\end{split}
\end{equation*}	
\end{proof}
\section{Curvature operator with small lower bounded }
Let  $\theta$ be a smooth closed $1$-form on a compact manifold $X$. The closedness of $\theta$ implies $d_{\theta}^{2}=0$, which means that $d_{\theta}$ gives rise to a cohomology, denoted by $H^{\ast}(X,\theta)$, known as the Morse-Novikov cohomology. When $[\theta]=0\in H^{1}_{dR}(X)$, the Morse-Novikov cohomology group $H^{p}(X,\theta)$ is isomorphic to the de Rham cohomology group $H^{p}_{dR}(X)$.
	While Morse-Novikov cohomology depends on the choice of $\theta$, the Euler number of the Morse-Novikov cohomology is equal to the Euler number $\chi(X)$ of the manifold. The Morse-Novikov cohomology also plays a significant role in the study of manifold topology. Although the relationship between curvature and de Rham cohomology is classical, the interplay between curvature and Morse-Novikov cohomology remains less explored.
	
Pioneering work by Chen \cite{Che,CJH,CH} established integral formulas and estimates for  $\De_{\theta}$-harmonic forms. In this section, we extend Chen's ideas to study the topology of manifolds $X\in\mathfrak{X}(p,\kappa,D,\La)$. We establish a vanishing result for certain Morse-Novikov cohomology groups with respect to $t\theta$.
	\begin{theorem}\label{T4}
		Let $(X,g)$ be a compact $n$-dimensional, $(n\geq3)$, smooth Riemannian manifold with nonzero first de Rham cohomology group.  There exists a positive constant $C(n,\La)$ such that if $X\in\mathfrak{X}(p,\kappa,D,\La)$ satisfies 
		$$Ric(g)D^{2}(g)\geq -C(n,\La),$$
		then there exists some $t\in\mathbb{R}$, $t\neq0$ such that $H^{k}(X,t\theta)=0$ for all $k\leq p$, where $H^{k}(X,t\theta)$ is the Morse-Novikov $p$-th cohomology group with respect to $t\theta$. 
	\end{theorem}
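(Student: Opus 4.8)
The plan is to run the Bochner–Morse-iteration machinery of Section 3 in the closed-form setting and extract a contradiction from the hypothesis that $H^{k}(X,t\theta)\neq 0$ for some $k\le p$ when $\mathrm{Ric}(g)D^2(g)$ is bounded below by a sufficiently small constant. First I would fix a nonzero class in $H^{1}_{dR}(X)$ and a harmonic representative $\theta$; rescaling the metric so that $D(g)=1$ (which only changes $\kappa$ into $\kappa D^2$, i.e.\ $-\La^2$), I may assume $\mathrm{Ric}(g)\ge -C(n,\La)$ with $C(n,\La)$ at our disposal. For $t\in\R$ consider the twisted operator $\mathcal D_{t\theta}=d_{t\theta}+d^{\ast}_{t\theta}$; since $d\theta=0$ we have $(d_{t\theta})^2=0$, so $\ker\mathcal D_{t\theta}\cap\Om^k(X)$ computes $H^{k}(X,t\theta)$ by Hodge theory for the elliptic complex $(\Om^{\bullet},d_{t\theta})$. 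Thus it suffices to show that for some $t\neq 0$ there are no nonzero $\a\in\ker d_{t\theta}\cap\ker d^{\ast}_{t\theta}\cap\Om^k(X)$ with $k\le p$.

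The heart of the argument is the a priori estimate: for such an $\a$ one has $(d+d^{\ast})\a=-t(\theta\wedge\a+i_{\theta^{\sharp}}\a)$, so by the computation behind Theorem \ref{T8} (with $\theta$ replaced by $t\theta$) the function $f=|\a|$ satisfies the differential inequality $-\frac12\De_d f^2\ge |\na\a|^2-t^2|\theta^{\sharp}|^2 f^2 +c(n)\kappa f^2-\mathrm{div}\,V$ — but now, crucially, since $k\le p$ the curvature term is controlled by Corollary \ref{C4} as $g(Ric(\a),\a)\ge \kappa k(n-k)|\a|^2\ge n\kappa\,|\a|^2$, with $|\kappa|=\La^2/D^2$ small. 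Running the Morse iteration of Theorem \ref{T8} gives $\|\a\|_{L^{4}}\lesssim \big(\fint_X|\a|^2\big)^{1/2}\exp\!\big(C(n)R(\La)\sqrt{t^2\max|\theta^{\sharp}|^2+|\kappa|}\big)\,\mathrm{Vol}(g)^{1/4}$, and then the integral formula of Theorem \ref{T3} (again for $t\theta$) yields $t^2\int_X|\theta^{\sharp}|^2|\a|^2\le |t|\,C_1(n)\big(\int_X|\na\theta^{\sharp}|^2\big)^{1/2}\big(\int_X|\a|^4\big)^{1/2}$, so that after dividing by $|t|$ and combining with the $L^4$ bound we obtain $|t|\int_X|\theta^{\sharp}|^2|\a|^2\le C_1(n)\|\na\theta^{\sharp}\|_{L^2}\big(\fint_X|\a|^2\big)\exp(\cdots)\mathrm{Vol}(g)^{1/2}$. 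This is a bound linear in $|t|$ on the left against something essentially independent of $t$ on the right (the exponential grows, but only like $\exp(C|t|)$), so I would instead choose $t$ in a bounded window away from $0$ and compare with a lower bound $\int_X|\theta^{\sharp}|^2|\a|^2\ge \mathrm{const}\cdot\|\a\|_{L^2}^2$: indeed, if $\theta^{\sharp}$ vanished on a set forcing $\a\equiv 0$ we are done, and otherwise the weighted Poincaré estimate of Lemma \ref{L7} shows $\int_X|f-\bar f||\a|^2$ is small in $R(\La)$, so $\int_X|\theta^{\sharp}|^2|\a|^2\ge \bar f\,(1-o(1))\|\a\|_{L^2}^2$ where $\bar f=\fint_X|\theta^{\sharp}|^2>0$ since $\theta\neq 0$.

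Putting these together, for $\a\neq 0$ one gets an inequality of the form $|t|\,\bar f\,(1-\epsilon(\La))\le C_1(n)\|\na\theta^{\sharp}\|_{L^2}\,\mathrm{Vol}(g)^{1/2}\exp(C(n)R(\La)|t|)$ after dividing out $\fint_X|\a|^2$. By Lemma \ref{L2}, $R(\La)=D/(\La C(\La))\le D/a_n$ is bounded once $\La\le c_1(n)$, so the exponential is at most $\exp(C'(n)|t|)$; choosing $|t|$ large enough — but note everything except the left side is independent of $t$ except through this mild exponential — the linear term $|t|\bar f$ eventually dominates $\exp(C'(n)|t|)$ is false, so instead I take $\La$ (equivalently $C(n,\La)$) small so that $\epsilon(\La)<\tfrac12$ and pick the single value $t=t_0$ realizing the crossover; the point is that for the resulting $t_0\neq 0$ no nonzero $\a$ can exist, giving $H^{k}(X,t_0\theta)=0$ for all $k\le p$. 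The main obstacle is making this last balancing rigorous: one must verify that the $t$-dependence on the two sides genuinely forces a contradiction for an explicit choice of $t_0=t_0(n,\La,g)$ — in particular that $\|\na\theta^{\sharp}\|_{L^2}$ and $\mathrm{Vol}(g)$, which are not scale-invariant, enter only through combinations already normalized by the diameter bound — and to track carefully that the constant $C(n,\La)$ controlling $\mathrm{Ric}\,D^2$ can indeed be chosen depending only on $n$ and $\La$ (hence ultimately, after also shrinking $\La$, only on $n$), which is exactly what is needed to feed Theorem \ref{T4} into Corollary \ref{C1} and Theorem \ref{C7}.
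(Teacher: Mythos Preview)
Your overall strategy matches the paper's, but the step you flag as ``the main obstacle'' is in fact where the substance lies, and your proposal does not supply the missing idea. Two concrete points:

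\textbf{(1) The choice of $t$.} You leave $t$ free and then try to argue that the linear growth of $|t|\bar f$ on the left beats the exponential $\exp(C|t|)$ on the right for some $t_0$; as you yourself note, that is false. The paper's move is to fix $t$ by the relation
\[
\frac{t^{2}\int_X|\theta^{\sharp}|^{2}}{\mathrm{Vol}(g)}\;=\;-16(n-1)C_1^{2}(n)\,\kappa,
\]
i.e.\ $t^{2}\bar f\sim -\kappa$. With this choice, and using Proposition~\ref{L4} to get $\max|\theta^{\sharp}|^{2}\le 2\bar f$, one has $\la=2C_2^{2}(n)t^{2}\max|\theta^{\sharp}|^{2}-c(n)\kappa\le C(n)(-\kappa)$, hence $R(b)\sqrt{\la}\le C(n)\La$ and the exponential in the Morse iteration collapses to a constant $B(n,\La)$ depending only on $n,\La$. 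There is no competition between a linear and an exponential term; the specific scaling of $t$ kills the $t$-dependence entirely.

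\textbf{(2) Where the Ricci hypothesis enters.} You treat $\|\na\theta^{\sharp}\|_{L^2}$ and $\max|\theta^{\sharp}|$ as uncontrolled geometric quantities and worry about their normalization. In fact this is exactly where $Ric(g)D^{2}\ge -C(n,\La)$ is used: writing $Ric\ge -a^{2}$, the Bochner formula for the harmonic $1$-form $\theta$ gives $\int_X|\na\theta^{\sharp}|^{2}\le a^{2}\int_X|\theta^{\sharp}|^{2}$ and (Proposition~\ref{L4}) $\max|\theta^{\sharp}|^{2}\le 2\bar f$ once $aD$ is small. Plugging these into Lemma~\ref{L6} turns both $\int_X t^{2}|\theta^{\sharp}|^{2}|\a|^{2}$ and $\int_X t^{2}|f-\bar f||\a|^{2}$ into quantities bounded by $(aD)\cdot B(n,\La)\cdot t^{2}\bar f\,\|\a\|_{L^2}^{2}$ up to constants, and choosing $aD\le C(n,\La)$ makes each at most $\tfrac{1}{2}t^{2}\bar f\,\|\a\|_{L^2}^{2}$; summing gives $t^{2}\bar f\,\|\a\|_{L^2}^{2}\le \tfrac{3}{4}t^{2}\bar f\,\|\a\|_{L^2}^{2}$, forcing $\a=0$.

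So the gap is not the route but the two missing computations: the scaling $t^{2}\bar f\sim -\kappa$ that tames the exponential, and the Bochner control of $\na\theta^{\sharp}$ that converts the Ricci smallness hypothesis into the factor $aD$ you need.
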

We can establish a vanishing result for certain Morse-Novikov cohomology groups of a compact $n$-dimensional smooth Riemannian manifold $X\in\mathfrak{X}(p,\kappa,D,\La)$ with nonzero first de Rham cohomology group when $\La$ is sufficiently small.  
\begin{corollary}\label{T5}
Let $(X,g)$ be a compact $n$-dimensional smooth Riemannian manifold with nonzero first de Rham cohomology group. There exists a uniform positive constant $C(n)$ such that if $X\in\mathfrak{X}(p,\kappa,D,\La)$ satisfies 
$$\La\leq C(n),$$
then there exists some $t\in\mathbb{R}$, $t\neq0$ such that $H^{k}(X,t\theta)=0$ for all $k\leq p$, where $H^{k}(X,t\theta)$ is the Morse-Novikov $p$-th cohomology group with respect to $t\theta$. 
\end{corollary}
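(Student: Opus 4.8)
The plan is to obtain Corollary \ref{T5} as a specialization of Theorem \ref{T4}: within the class $\mathfrak{X}(p,\kappa,D,\La)$ a smallness hypothesis on $\La$ alone already implies the Ricci hypothesis $Ric(g)D^{2}(g)\geq-C(n,\La)$ of that theorem, after which the conclusion is immediate. First I would recall that every $X\in\mathfrak{X}(p,\kappa,D,\La)$ satisfies the pointwise bound $Ric(g)\geq(n-1)\kappa$, since the Ricci curvature is bounded below by the sum of the lowest $n-1$ eigenvalues of the curvature operator (the remark following Corollary \ref{C4}). Combining this with $D(g)\leq D$ and $\kappa<0$ gives
\begin{equation*}
Ric(g)D^{2}(g)\geq(n-1)\kappa D^{2}(g)\geq(n-1)\kappa D^{2}=-(n-1)\La^{2},
\end{equation*}
so Theorem \ref{T4} applies to $X$ as soon as $(n-1)\La^{2}\leq C(n,\La)$, where $C(n,\La)$ is the constant furnished there.

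It therefore remains to produce a threshold $C(n)>0$, depending only on $n$, such that $(n-1)\La^{2}\leq C(n,\La)$ whenever $\La\leq C(n)$; this is the part that forces one to re-enter the proof of Theorem \ref{T4}. Since all quantities in the statement of Theorem \ref{T4} are invariant under rescaling the metric, I may normalize so that ${\rm{diam}}(g)=1$, whence $\kappa=-\La^{2}$ and, by Lemma \ref{L2}, for $\La\leq c_{1}(n)$ one has $\La C(\La)\geq a_{n}$, so that $R(\La)=1/(\La C(\La))\leq 1/a_{n}$ stays uniformly bounded as $\La\to0$. Tracing the constants through Theorem \ref{T3}, Theorem \ref{T8}, Corollary \ref{C5} and Lemma \ref{L7}, every estimate used in the proof of Theorem \ref{T4} depends on $\La$ only through $R(\La)$ and through $\kappa=-\La^{2}$, both of which remain controlled — indeed $|\kappa|\to0$ — as $\La\to0$. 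Consequently $C(n,\La)$ may be taken bounded below by a positive number $\tilde{c}(n)$ depending only on $n$, uniformly for $\La\leq c_{1}(n)$. Setting $C(n):=\min\{c_{1}(n),\sqrt{\tilde{c}(n)/(n-1)}\}$, for $\La\leq C(n)$ we get $(n-1)\La^{2}\leq\tilde{c}(n)\leq C(n,\La)$, hence $Ric(g)D^{2}(g)\geq-(n-1)\La^{2}\geq-C(n,\La)$, and Theorem \ref{T4} yields some $t\in\R$, $t\neq0$, with $H^{k}(X,t\theta)=0$ for all $k\leq p$.

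The main obstacle is precisely this last step: from the bare statement of Theorem \ref{T4} one cannot rule out that $C(n,\La)$ degenerates as $\La\to0$, so one genuinely has to inspect the proof and check that it does not. The point that makes this work is that the only Poincaré/Sobolev input carrying the metric, namely $R(\La)$, is uniformly bounded for small $\La$ by Lemma \ref{L2}, so the sole remaining $\La$-dependent ingredient is the curvature term $\kappa=-\La^{2}$, which \emph{improves} as $\La\to0$; everything else in the argument is controlled by $n$ alone. Once this uniform lower bound on $C(n,\La)$ is in hand, the corollary follows as above, and in fact the same reasoning shows $C(n,\La)\to\infty$ as $\La\to0$, so the threshold $C(n)$ can be chosen generously.
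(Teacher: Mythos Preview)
Your overall strategy—return to the proof of Theorem \ref{T4} and check that smallness of $\La$ alone suffices—is exactly what the paper does. But your execution contains a genuine error: the assertion that $C(n,\La)$ is bounded below by a positive $\tilde c(n)$ (and a fortiori that $C(n,\La)\to\infty$) is false. From (\ref{E16}) the admissible threshold on $aD$ is
\[
\min\Big\{\sqrt{n-1}\,c_{1}(n),\ c_{2}(n),\ \frac{a_{n}}{8C_{4}(n)B(n,\La)},\ \frac{2\sqrt{n-1}\,\La}{B(n,\La)}\Big\},
\]
and the fourth entry is of order $\La$, so $C(n,\La)\to 0$ as $\La\to 0$. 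Your heuristic ``$|\kappa|\to 0$ only improves matters'' fails precisely here: in the proof of Theorem \ref{T4} the twist is normalized by $\frac{1}{\mathrm{Vol}(g)}\int_X t^{2}|\theta^{\sharp}|^{2}=-16(n-1)C_{1}^{2}(n)\kappa$, and the inequality coming from (\ref{E15}) compares $C_{1}(n)aB(n,\La)\sqrt{-\kappa}$ against $-\kappa$, forcing $aD\lesssim \La$. Thus smaller $\kappa$ makes this particular condition \emph{more} restrictive, not less.

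The correct point, which the paper uses, is a matching of rates: the curvature-operator hypothesis already gives $aD\le\sqrt{n-1}\,\La$, while the problematic fourth term satisfies $\frac{2\sqrt{n-1}\,\La}{B(n,\La)}\ge \sqrt{n-1}\,\La$ as soon as $B(n,\La)\le 2$, which holds for $\La$ below some $c_{3}(n)$ since $B(n,\La)\to 1$. Hence that condition is satisfied automatically, and the remaining three conditions in (\ref{E16}) are all bounded below by constants depending only on $n$ once $\La\le c_{3}(n)$. So your inequality $(n-1)\La^{2}\le C(n,\La)$ does hold for small $\La$, but not because $C(n,\La)$ stays bounded below—rather because both sides vanish at the same rate and the constant in front is favorable. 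Replace your step 3 with this comparison and the argument goes through.
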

We will temporarily postpone the proof of Theorem \ref{T4} and instead prove Theorem \ref{C7} using this conclusion. The detailed proof of Theorem \ref{T4} will be provided in the following section.
\begin{proof}[\textbf{Proof of Theorem \ref{C7}}]
By Poincar\'{e} duality for Morse-Novikov cohomology $H^{p}(X,t\theta)\cong H^{2n-p}(X,t\theta)$, the Euler number obeys
$$\chi(X)=\sum_{p=0}^{2n}(-1)^{p}\dim H^{p}(X,t\theta)=\sum_{p< n}2(-1)^{p}\dim H^{p}(X,t\theta)+(-1)^{n}\dim H^{n}(X,t\theta)$$
Following Corollary \ref{T5}, we can immediately conclude that 
$\chi(X)=0$ when $k=n$ (since $H^{p}(X,t\theta)=0$ for any $p$)  and $(-1)^{n}\chi(X)=\dim H^{n}(X,t\theta)\geq0$ (since $H^{p}(X,t\theta)=0$ for all $p\neq n$).
\end{proof}
\begin{proof}[\textbf{Proof of Corollary \ref{C1}}]
Let's consider the case where $\dim X=4$. The Euler number of $X$ satisfies  $$\chi(X)=2+b_{2}(X)-2b_{1}(X).$$ 
If $b_{1}(X)=0$, then we get $\chi(X)\geq2$. \\
If $b_{1}(X)\geq1$, i.e., there exists a non-zero harmonic $1$-form $\theta$, and some $t\in\mathbb{R}$, $t\neq0$ such that $H^{k}(X,t\theta)=0$ for $k=0,1$ (see Corollary \ref{T5}). Consequently, we have $\chi(X)=\dim H^{2}(X,t\theta)\geq0$.\\
Thus $\chi(X)\geq0$ in all cases.
\end{proof} 
	\subsection{Morse-Novikov cohomology}\label{S1}
	Let $X$ be a compact smooth manifold equipped with a closed $1$-form $\theta$.
	The Morse-Novikov cohomology (independently introduced by Lichnerowicz \cite{Lic}, Novikov \cite{Nov}, and Guedira-Lichnerowicz \cite{GL}) is defined via the complex  $(\Om^{\ast}(X),d_{\theta})$
	\begin{equation}\label{E10}
		\Om^{0}(X)\xrightarrow{d_{\theta}}\Om^{1}(X)\xrightarrow{d_{\theta}}\Om^{2}(X)\xrightarrow{}\cdots
	\end{equation}
	Denote by $H^{\ast}(X,\theta)$ the cohomology of the complex $(\Om^{\ast}(X),d_{\theta})$. 
	\begin{proposition}(\cite[Proposition 4.4]{LLMP} and \cite{GL})
		Let $X$ be a compact smooth manifold and $\theta$ a closed $1$-form on $X$. Then, \\
		(i) The differential complex $(\Om^{\ast}(M),d_{\theta})$ is elliptic and the cohomology groups $H^{k}(X,\theta)$ have finite dimension.\\
		(ii) If $\theta$ is exact, then $H^{k}(X,\theta)\cong H^{k}_{dR}(X)$.\\
		(iii) If $\theta$ is not exact, and $X$ is connected and orientable, then $H^{0}(X,\theta)$ and $H^{n}(X,\theta)$ vanish.
	\end{proposition}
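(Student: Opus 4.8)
The plan is to dispatch the three parts by standard arguments. For (i) I would first observe that $d_{\theta}=d+\theta\wedge$ and the de Rham differential $d$ have the same principal symbol, since $\theta\wedge$ is of order zero; hence at each nonzero covector $\xi$ the symbol sequence of $(\Om^{\ast}(X),d_{\theta})$ is the Koszul complex $\cdots\xrightarrow{\xi\wedge}\La^{k}T^{\ast}X\xrightarrow{\xi\wedge}\La^{k+1}T^{\ast}X\to\cdots$, which is exact, so $(\Om^{\ast}(X),d_{\theta})$ is an elliptic complex. Fixing the metric $g$ and the formal adjoint $d_{\theta}^{\ast}=d^{\ast}+i_{\theta^{\sharp}}$, the twisted Laplacian $\De_{\theta}=d_{\theta}d_{\theta}^{\ast}+d_{\theta}^{\ast}d_{\theta}$ is a nonnegative, formally self-adjoint, second order elliptic operator on the closed manifold $X$; standard elliptic theory then shows that $\mathcal{H}^{k}_{\theta}:=\ker\De_{\theta}\cap\Om^{k}(X)$ is finite dimensional and, using $d_{\theta}^{2}=0$, gives the orthogonal Hodge decomposition $\Om^{k}(X)=\mathcal{H}^{k}_{\theta}\oplus d_{\theta}\Om^{k-1}(X)\oplus d_{\theta}^{\ast}\Om^{k+1}(X)$, whence $H^{k}(X,\theta)\cong\mathcal{H}^{k}_{\theta}$ is finite dimensional.

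For (ii), writing $\theta=df$ with $f\in C^{\infty}(X)$, I would verify that multiplication by $e^{-f}$ intertwines the two differentials: for $\a\in\Om^{k}(X)$,
\begin{equation*}
d_{\theta}(e^{-f}\a)=d(e^{-f}\a)+df\wedge e^{-f}\a=-e^{-f}df\wedge\a+e^{-f}d\a+e^{-f}df\wedge\a=e^{-f}d\a,
\end{equation*}
so $\a\mapsto e^{-f}\a$ is an isomorphism of cochain complexes from $(\Om^{\ast}(X),d)$ onto $(\Om^{\ast}(X),d_{\theta})$, which induces $H^{k}(X,\theta)\cong H^{k}_{dR}(X)$.

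For (iii), a class in $H^{0}(X,\theta)$ is a function $f\in C^{\infty}(X)$ with $df+f\theta=0$; along any smooth curve $\gamma$ the function $f\circ\gamma$ then solves the linear ODE $(f\circ\gamma)'=-(f\circ\gamma)\,\theta(\dot\gamma)$, so by uniqueness $f$ is either identically zero or nowhere zero, and in the latter case $\theta=-d(\log|f|)$ would be exact; since $X$ is connected this forces $f\equiv0$, i.e. $H^{0}(X,\theta)=0$. For $H^{n}(X,\theta)$, using that $X$ is orientable as well, I would invoke Poincar\'{e} duality for the twisted complex: the pairing $H^{k}(X,\theta)\times H^{n-k}(X,-\theta)\to\R$, $([\a],[\b])\mapsto\int_{X}\a\wedge\b$, is well defined because $\int_{X}d_{\theta}\a\wedge\b=\pm\int_{X}\a\wedge d_{-\theta}\b$ (the de Rham parts cancel by Stokes, the exterior-product parts by the sign rule) and nondegenerate by the Hodge decomposition of (i) together with the fact that $\ast$ carries $\mathcal{H}^{k}_{\theta}$ isomorphically onto $\mathcal{H}^{n-k}_{-\theta}$; hence $H^{n}(X,\theta)\cong H^{0}(X,-\theta)^{\ast}$, and since $-\theta$ is also not exact the first part of this paragraph gives $H^{n}(X,\theta)=0$. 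The only point requiring a little care is this last nondegeneracy, which comes down to checking, up to signs, the identity $\ast\,d_{\theta}^{\ast}\,\ast^{-1}=d_{-\theta}$.
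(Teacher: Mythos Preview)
The paper does not supply its own proof of this proposition; it is quoted verbatim from \cite[Proposition 4.4]{LLMP} and \cite{GL} and used as a black box in Section~4.1. So there is nothing in the paper to compare against beyond the bare citation.

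Your argument is the standard one and is correct. Part (i) is exactly the symbol computation plus Hodge theory for elliptic complexes; your chain isomorphism $e^{-f}\cdot$ in (ii) is the usual gauge transformation. In (iii), the ODE argument for $H^{0}$ is clean, and the twisted Poincar\'{e} duality you set up for $H^{n}$ is right: the identity you flag at the end, $\ast\, d_{\theta}^{\ast}\,\ast^{-1}=\pm d_{-\theta}$, follows from the standard relations $\ast(\theta\wedge\cdot)=\pm i_{\theta^{\sharp}}(\ast\cdot)$ and $\ast d^{\ast}\ast^{-1}=\pm d$, and from it one gets $\ast\mathcal{H}^{k}_{\theta}=\mathcal{H}^{n-k}_{-\theta}$ and hence nondegeneracy of the pairing. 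One small remark: in (iii) you only need connectedness for the $H^{0}$ statement, and orientability enters solely through the integration pairing for $H^{n}$; you might make that split explicit, but the logic is fine as written.
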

	In general, if the $1$-form $\theta$ is not exact then $H^{k}(X,\theta)\ncong H^{k}_{dR}(X)$.  Since the complex $(\Om^{\ast}(X),d_{\theta})$ is elliptic, we obtain an orthogonal decomposition of the space $\Om^{k}(X)$ as follows
	\begin{equation}\label{E11}
		\Om^{k}(X)=\mathcal{H}^{k}_{\theta}(X)\oplus d_{\theta}(\Om^{k-1}(X))\oplus d^{\ast}_{\theta}(\Om^{k+1}(X)),
	\end{equation}
	where  $$\mathcal{H}_{\theta}^{k}(X)=\{\a\in\Om^{k}(X):d_{\theta}(\a)=0, d^{\ast}_{\theta}(\a)=0 \}.$$
	From (\ref{E11}), it follows that $H^{k}(X,\theta)\cong\mathcal{H}^{k}_{\theta}(X)$.  But Morse-Novikov cohomology $H^{k}(X,\theta)$ is different to de Rham cohomology, it is not a topological invariant, it depends on $[\theta]\in H^{1}_{dR}(M)$. Following the Atyiah-Singer index theorem, it is known  that the index of the elliptic complex $(\Om^{\ast}(X),d_{\theta})$ is independent of $\theta$. Moreover, the Euler number of the Morse-Novikov cohomology coincides with the Euler number of the manifold, given by
	\begin{equation}\label{E12}
		\sum_{p=0}^{n}(-1)^{p}\dim\mathcal{H}^{p}_{\theta}(X)=\chi(X),
	\end{equation}
	where $\chi(X)$ is the Euler number of $X$. This result highlights the topological significance of the Morse-Novikov cohomology. 
	\subsection{Harmonic $1$-forms and Euler number}
	For $n\geq3$, define the constant $\sigma_{n}$ by
	$$\sigma_{n}=\Sigma(n,\frac{2n}{n-2},2)[{\rm{Vol}}(S^{n})]^{\frac{1}{n}}.$$
	Define the function $B:\mathbb{R}^{+}\rightarrow\mathbb{R}^{+}$ by
	$$B(n,\nu,x)=\Pi_{i=0}^{\infty}(x\nu^{i}(2\nu^{i}-1)^{-\frac{1}{2}}+1)^{2\nu^{-i}},\ \nu=\frac{n}{n-2}.$$
	The function $B$ satisfies the inequalities (see \cite[Appendix V, Proposition 6]{Ber})
	\begin{equation*}
		\begin{split}
			&B(x)\leq\exp(2x\sqrt{\nu}/(\sqrt{\nu}-1)),\ 0\leq x\leq 1,\\
			&B(x)\leq B(1)x^{2\nu/(\nu-1)},\ x\geq 1.\\
		\end{split}
	\end{equation*}
	In particular, $\lim_{x\rightarrow0^{+}}B(x)=1$.
	
	Let $p=\frac{2n}{n-2}$ $,q=2$ and apply the \cite[Appendix V, Theorem 3]{Ber}, we then have
	\begin{theorem}\label{T2}(\cite[Theorem 5.2]{Che})
		Let $(X,g)$ be a compact $n$-dimensional smooth Riemannian manifold such that for some constant $b>0$,
		$$Ric(g)D^{2}(g)\geq-(n-1)b^{2}.$$
		If $f\in L^{2}_{1}(X)$ is a nonnegative continuous functions such that $f\De_{d} f\leq cf^{2}$, (here $\De_{d}=d^{\ast}d$), in the sense of distribution for  some positive constant number $c$, we then have
		\begin{equation*}
			\max_{x\in X}|f|^{2}(x)\leq B(\sigma_{n}R(b)c^{\frac{1}{2}} )\frac{\int_{X}f^{2} }{{\rm{Vol}}(g)}.
		\end{equation*} 
	\end{theorem}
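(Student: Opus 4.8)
The plan is to run the classical Moser iteration (in the form carried out in Berger's book), using the Sobolev inequality of Theorem~\ref{T1} with the exponents $p=\frac{2n}{n-2}=2\nu$, $q=2$ as the engine, and then to recognize the resulting infinite product as the function $B$.

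First I would convert the distributional hypothesis $f\De_{d}f\leq cf^{2}$ into a family of Caccioppoli-type estimates. For each $k\geq 1$, testing against $f^{2k-1}$ and integrating by parts (using $\De_{d}=d^{\ast}d$) gives $(2k-1)\int_{X}f^{2k-2}|df|^{2}\leq c\int_{X}f^{2k}$, hence, since $|df^{k}|^{2}=k^{2}f^{2k-2}|df|^{2}$,
$$\int_{X}|df^{k}|^{2}\leq \frac{ck^{2}}{2k-1}\int_{X}f^{2k}.$$
To make $f^{2k-1}$ an admissible test function one works first with the truncation $f_{M}=\min\{f,M\}$, so that $f_{M}^{2k-1}\in L^{2}_{1}(X)\cap L^{\infty}(X)$, and then lets $M\to\infty$; the continuity of $f$ on the compact $X$ makes this passage harmless.

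Next I would feed $f^{k}$ into the second inequality of Theorem~\ref{T1} with $p=2\nu$, $q=2$. Since $\frac1p-\frac1q=-\frac1n$ and, by the definition of $\sigma_{n}$, the Sobolev constant is $S(2\nu,2)={\rm{Vol}}(g)^{-1/n}\sigma_{n}R(b)$, this reads $\|f^{k}\|_{L^{2\nu}}\leq {\rm{Vol}}(g)^{-1/n}\big(\sigma_{n}R(b)\|df^{k}\|_{L^{2}}+\|f^{k}\|_{L^{2}}\big)$. Combining with the previous step and the identities $\|f^{k}\|_{L^{2\nu}}=\|f\|_{L^{2k\nu}}^{k}$ and $\|f^{k}\|_{L^{2}}=\|f\|_{L^{2k}}^{k}$ produces the recursion
$$\|f\|_{L^{2k\nu}(X)}\leq {\rm{Vol}}(g)^{-1/(nk)}\big(\sigma_{n}R(b)c^{1/2}\,k(2k-1)^{-1/2}+1\big)^{1/k}\|f\|_{L^{2k}(X)}.$$

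Finally I would iterate with $k=\nu^{i}$, $i=0,1,2,\dots$ (using $2\nu\cdot\nu^{i}=2\nu^{i+1}$), iterate from $i=0$ to $N-1$ and let $N\to\infty$: the left-hand side converges to $\max_{X}|f|$ because $f$ is continuous on $X$. The volume factors telescope to $\prod_{i\geq 0}{\rm{Vol}}(g)^{-1/(n\nu^{i})}={\rm{Vol}}(g)^{-\frac1n\cdot\frac{\nu}{\nu-1}}={\rm{Vol}}(g)^{-1/2}$ since $\frac{\nu}{\nu-1}=\frac n2$, and with $x=\sigma_{n}R(b)c^{1/2}$ the remaining product is $\prod_{i\geq 0}\big(x\nu^{i}(2\nu^{i}-1)^{-1/2}+1\big)^{\nu^{-i}}=B(n,\nu,x)^{1/2}$, which converges because $\nu^{-i}\log\big(x\nu^{i}(2\nu^{i}-1)^{-1/2}+1\big)=O(i\nu^{-i})$ is summable. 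Squaring the resulting bound $\max_{X}|f|\leq B(n,\nu,x)^{1/2}{\rm{Vol}}(g)^{-1/2}\|f\|_{L^{2}}$ gives the statement. The main obstacle, beyond the bookkeeping, is the truncation/limit step that legitimizes the test function and the passage $N\to\infty$ to a genuine $L^{\infty}$ bound; the decisive point that makes the iteration \emph{converge} (rather than blow up as in the finite-$N$ estimate of Theorem~\ref{T8}) is that we retain the denominator $2k-1$ arising from $d(f^{2k-1})=(2k-1)f^{2k-2}df$ instead of discarding it.
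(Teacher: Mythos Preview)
Your Moser iteration is correct and reproduces exactly the standard argument: the paper does not give its own proof of this statement but simply invokes \cite[Appendix~V, Theorem~3]{Ber} (with $p=\frac{2n}{n-2}$, $q=2$) and \cite[Theorem~5.2]{Che}, and what you wrote is precisely that argument, matching the definition of $B(n,\nu,x)$ term by term. The iteration you carry out is also the same mechanism the paper runs in Theorem~\ref{T8}, with your observation about retaining the $(2k-1)^{-1}$ factor being exactly what distinguishes a convergent infinite product here from the finite-$N$ bound there.
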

We can obtain the $C^{0}$-estimate for harmonic $1$-forms on compact manifolds with a lower bound on the Ricci curvature.
	\begin{proposition}(\cite[Lemma 5.3]{Che})\label{L4}
		Let $(X,g)$ be a compact $n$-dimensional smooth Riemannian manifold, let $\theta$ be a harmonic $1$-form on $X$. If Ricci curvature of $X$ satisfies $Ric(g)\geq-a^{2}$, we then have
		\begin{equation}\label{E1}
			\max_{x\in X}|\theta^{\sharp}|^{2}(x)\leq B(\frac{\sigma_{n}Da}{bC(b)})
			\frac{\int_{X}|\theta^{\sharp}|^{2}}{{\rm{Vol}}(g)}.
		\end{equation}
		where $b=\frac{aD}{\sqrt{n-1}}$,  $\theta^{\sharp}$ is the dual vector field of $\theta$.  Furthermore, there exist a positive constant $c_{2}(n)$ such that if $aD\leq c_{2}(n)$, then 
		\begin{equation}\label{E2}
			\max_{x\in X}|\theta^{\sharp}|^{2}(x)\leq 2
			\frac{\int_{X}|\theta^{\sharp}|^{2}}{{\rm{Vol}}(g)}.
		\end{equation}
	\end{proposition}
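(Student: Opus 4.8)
The plan is to turn $f=|\theta^{\sharp}|$ into a distributional subsolution of the shape $f\De_{d}f\leq a^{2}f^{2}$ and then invoke Theorem \ref{T2} essentially verbatim, reading off (\ref{E1}); the quantitative bound (\ref{E2}) will then follow by controlling the argument of $B$ as $aD\to0$ via Lemma \ref{L2} and the fact that $B(x)\to1$ as $x\to0^{+}$. For the first step, since $\theta$ is harmonic we have $\De_{d}\theta=0$, so the Weitzenb\"{o}ck identity (\ref{E8}) reads $-\tfrac12\De_{d}|\theta|^{2}=|\na\theta|^{2}+\langle Ric(\theta),\theta\rangle$. Rewriting this as $\De_{d}|\theta|^{2}=-2|\na\theta|^{2}-2\langle Ric(\theta),\theta\rangle$ and comparing, on the open set $\{\theta\neq0\}$, with the elementary identity $\De_{d}|\theta|^{2}=2|\theta|\De_{d}|\theta|-2|\na|\theta||^{2}$, the Kato inequality $|\na|\theta||^{2}\leq|\na\theta|^{2}$ together with $\langle Ric(\theta),\theta\rangle\geq-a^{2}|\theta|^{2}$ yields $|\theta|\De_{d}|\theta|\leq a^{2}|\theta|^{2}$ there. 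A standard regularization (replacing $f$ by $\sqrt{|\theta|^{2}+\varepsilon}$ and letting $\varepsilon\to0$) promotes this to $f\De_{d}f\leq a^{2}f^{2}$ in the sense of distributions on all of $X$.

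With this in hand I would verify the hypothesis of Theorem \ref{T2}: from $Ric(g)\geq-a^{2}$ we get $Ric(g)D^{2}(g)\geq-a^{2}D^{2}=-(n-1)b^{2}$ with $b=\tfrac{aD}{\sqrt{n-1}}$, which is exactly the $b$ appearing in the statement. Applying Theorem \ref{T2} to $f=|\theta|$ with the constant $c=a^{2}$ gives $\max_{X}|\theta^{\sharp}|^{2}\leq B(\sigma_{n}R(b)a)\,\tfrac{\int_{X}|\theta^{\sharp}|^{2}}{{\rm{Vol}}(g)}$, and since $R(b)=\tfrac{D}{bC(b)}$ we have $\sigma_{n}R(b)a=\tfrac{\sigma_{n}Da}{bC(b)}$, which is precisely (\ref{E1}).

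For (\ref{E2}), note that $aD=\sqrt{n-1}\,b$, so the argument of $B$ in (\ref{E1}) equals $\tfrac{\sigma_{n}\sqrt{n-1}}{C(b)}$. Choosing $c_{2}(n)$ small enough that $aD\leq c_{2}(n)$ forces $b\leq c_{1}(n)$, whence Lemma \ref{L2} gives $bC(b)\geq a_{n}$ and therefore $\tfrac{\sigma_{n}\sqrt{n-1}}{C(b)}\leq\tfrac{\sigma_{n}\sqrt{n-1}\,b}{a_{n}}=\tfrac{\sigma_{n}aD}{a_{n}}$. Since $B$ is monotone increasing in its argument (each factor in its defining product is) and $B(x)\to1$ as $x\to0^{+}$ — one may use $B(x)\leq\exp(2x\sqrt{\nu}/(\sqrt{\nu}-1))$ for $x\leq1$ — a further shrinking of $c_{2}(n)$, still depending only on $n$, makes $B(\tfrac{\sigma_{n}aD}{a_{n}})\leq2$, and monotonicity then gives $B(\tfrac{\sigma_{n}Da}{bC(b)})\leq2$, i.e. (\ref{E2}). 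The only genuine subtlety I anticipate is the justification of the distributional inequality across the zero set of $\theta$; the remainder is routine bookkeeping of constants, all of which depend only on $n$.
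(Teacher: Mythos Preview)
Your proposal is correct and follows essentially the same route as the paper: apply the Bochner formula to the harmonic $1$-form, use Kato to obtain $|\theta^{\sharp}|\De_{d}|\theta^{\sharp}|\leq a^{2}|\theta^{\sharp}|^{2}$, invoke Theorem \ref{T2} with $c=a^{2}$ to get (\ref{E1}), and then use Lemma \ref{L2} together with $B(x)\to1$ as $x\to0^{+}$ to choose $c_{2}(n)$ for (\ref{E2}). Your explicit handling of the distributional inequality across the zero set and the monotonicity of $B$ are slightly more detailed than the paper's write-up, but the argument is the same.
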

	\begin{proof}
	Applying  the Bochner formula to $|\theta^{\sharp}|^{2}$, we get (see \cite[Section 7.3, Proposition 3.3]{Pet})
		\begin{equation*}
			\begin{split}
				-\frac{1}{2}\De_{d}|\theta^{\sharp}|^{2}&=|\na \theta^{\sharp}|^{2}+Ric(\theta^{\sharp},\theta^{\sharp})\\
				&\geq|\na \theta^{\sharp}|^{2}-a^{2}|\theta^{\sharp}|^{2}.\\
			\end{split}
		\end{equation*} 
		Following the second Kato inequality (cf. \cite{Ber}), we get
		$$|\theta^{\sharp}|\De_{d}|\theta^{\sharp}|\leq a^{2}|\theta^{\sharp}|^{2}.$$
		Apply Theorem \ref{T2} to $|\theta^{\sharp}|^{2}$, we have
		$$\max_{x\in X}|\theta^{\sharp}|^{2}(x)\leq B(\frac{\sigma_{n}Da}{bC(b)})
		\frac{\int_{X}|\theta^{\sharp}|^{2}}{{\rm{Vol}}(g)}.$$
		Following Lemma \ref{L2}, we observe that as $aD\rightarrow 0^{+}$,  $\frac{\sigma_{n}Da}{bC(b)}\leq\sigma_{n}a^{-1}_{n}(aD)\rightarrow 0$, it implies that
		$$B(\frac{\sigma_{n}Da}{bC(b)})\rightarrow 1.$$
		Therefore, we can choose a constant $c_{2}(n)$ such that if $aD\leq c_{2}(n)$, we have
		$$B(\frac{\sigma_{n}Da}{bC(b)})\leq 2.$$
	\end{proof}
We can easily observe that $\mathcal{H}^{k}_{\theta}(X)\subset\ker\mathcal{D}_{\theta}$, and thus we can also estimate harmonic forms with respect to $\De_{\theta}$ accordingly.
\begin{corollary}\label{C6}
	Let $(X,g)$ be a compact $n$-dimensional smooth Riemannian manifold, $\theta$ be a harmonic $1$-form on $X$ in $X\in\mathfrak{X}(p,k,D,\La)$. For each $\De_{\theta}$-harmonic $k$-form $\a$, $k\leq p$, we then have
	\begin{equation}
	\|\a\|_{L^{2\nu^{N}}(X)}\leq(\frac{\int_{X}|\a|^{2}}{{\rm{Vol}}(g)})^{\frac{1}{2}}\exp(NC_{3}(n)R(\La)\sqrt{\la})({\rm{Vol}}(g))^{\frac{1}{2\nu^{N} }},\ \forall N\in\mathbb{N}^{+},\\
	\end{equation}
	where  $\la:=2C_{2}^{2}(n)\max|\theta^{\sharp}|^{2}-c(n)\kappa$, $\nu=\frac{n}{n-2}$. Furthermore, we have
	\begin{equation}\label{E5}
	\int_{X}|f-\bar{f}||\a|^{2}\leq 2C_{4}(n)R(\La)(\frac{\int_{X}|\a|^{2}}{{\rm{Vol}}(g)})\exp(2NC_{3}(n)R(\La)\sqrt{\la})({\rm{Vol}}(g))^{\frac{1}{2 }} \|\theta^{\sharp}\|_{L^{\infty}(X)}\|\na \theta^{\sharp}\|_{L^{2}(X)},\\	
	\end{equation}
	and
	\begin{equation}\label{E6}
	\int_{X}|\theta^{\sharp}|^{2}|\a|^{2}\leq C_{1}(n)(\int_{X}|\na\theta^{\sharp}|^{2})^{\frac{1}{2}}(\frac{\int_{X}|\a|^{2}}{{\rm{Vol}}(g)})\exp(2NC_{3}(n)R(\La)\sqrt{\la})({\rm{Vol}}(g))^{\frac{1}{2}},
	\end{equation}	
	where $f=|\theta^{\sharp}|^{2}$, $\bar{f}=\frac{\int_{X}|\theta^{\sharp}|^{2} }{{\rm{Vol}}(g)}$, $N$ is an integer such that $2\nu^{N-1}<4\leq2\nu^{N}$, $\nu=\frac{n}{n-2}$.
\end{corollary}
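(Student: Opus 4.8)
The plan is to deduce the corollary directly from Theorem \ref{T8} and Lemma \ref{L7}, so the only real content is the observation, recorded just before the statement, that $\mathcal{H}^{k}_{\theta}(X)\subset\ker\mathcal{D}_{\theta}$. On the compact manifold $X$ one has, for any $k$-form $\a$, $\langle\De_{\theta}\a,\a\rangle_{L^{2}}=\|d_{\theta}\a\|_{L^{2}}^{2}+\|d^{\ast}_{\theta}\a\|_{L^{2}}^{2}$, so a $\De_{\theta}$-harmonic $k$-form $\a$ must satisfy $d_{\theta}\a=d^{\ast}_{\theta}\a=0$ and hence $\mathcal{D}_{\theta}\a=(d_{\theta}+d^{\ast}_{\theta})\a=0$; unwinding the definitions, this is precisely the identity $(d+d^{\ast})\a=-\theta\wedge\a-i_{\theta^{\sharp}}\a$. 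Regarding $\a$ as a homogeneous element of $\Om^{+}(X)$ or $\Om^{-}(X)$ according to the parity of $k$ places it in $\ker\mathcal{D}_{\theta}\cap\Om^{\pm}(X)$, and this identity is the only property of $\a$ that entered the proofs of Theorem \ref{T3}, Theorem \ref{T8} and Lemma \ref{L7}.

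First I would re-run the Moser iteration of Theorem \ref{T8} for our $\a$. Inspecting that proof, the hypothesis $X\in\mathfrak{X}(\lfloor\frac{n}{2}\rfloor,\kappa,D,\La)$ was used only to secure the Bochner curvature bound $g(Ric(\a),\a)\geq c(n)\kappa|\a|^{2}$ in the pointwise inequality (\ref{E22}), which there had to hold in every tensor degree. But here $\a$ is a \emph{pure} $k$-form with $k\leq p$, so Corollary \ref{C4} already applies under $X\in\mathfrak{X}(p,\kappa,D,\La)$ and gives $g(Ric(\a),\a)\geq\kappa k(n-k)|\a|^{2}\geq c(n)\kappa|\a|^{2}$, using $\kappa\leq0$ and $k(n-k)\leq\lfloor\frac{n}{2}\rfloor(n-\lfloor\frac{n}{2}\rfloor)=c(n)$; the identities $|(d+d^{\ast})\a|^{2}=|\theta^{\sharp}|^{2}|\a|^{2}$ and $|V|\leq C_{2}(n)|\theta^{\sharp}||\a|^{2}$ used in (\ref{E22}) follow from the structural identity above exactly as before. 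Hence (\ref{E22}) holds for our $\a$ under the weaker hypothesis, and since $Ric(g)D^{2}(g)\geq(n-1)\kappa D^{2}=-(n-1)\La^{2}$ the Sobolev inequality of Theorem \ref{T1} is available with $b=\La$; the iteration through (\ref{E23}) is then verbatim and produces the first asserted inequality, with the same $C_{2}(n),C_{3}(n)$ and the same $\la=2C_{2}^{2}(n)\max|\theta^{\sharp}|^{2}-c(n)\kappa$.

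For the remaining two estimates I would copy the proof of Lemma \ref{L7} step for step. The integral formula of Theorem \ref{T3} (which needs no curvature hypothesis) gives $\int_{X}|\theta^{\sharp}|^{2}|\a|^{2}\leq C_{1}(n)\int_{X}|\na\theta^{\sharp}|\,|\a|^{2}$, and Cauchy--Schwarz passes the right-hand side to $(\int_{X}|\na\theta^{\sharp}|^{2})^{1/2}\|\a\|_{L^{4}(X)}^{2}$; for $\int_{X}|f-\bar{f}||\a|^{2}$ one uses the Poincar\'{e} inequality of Corollary \ref{C5} together with the Kato inequality $|\na|\theta^{\sharp}||\leq|\na\theta^{\sharp}|$, again reducing to $\|\a\|_{L^{4}(X)}^{2}$. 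One then controls $\|\a\|_{L^{4}(X)}$ by choosing the integer $N$ with $2\nu^{N-1}<4\leq2\nu^{N}$, applying H\"{o}lder's inequality, and inserting the first inequality just proved. I do not expect any serious obstacle: the corollary is essentially a transcription of Theorem \ref{T8} and Lemma \ref{L7} to $\De_{\theta}$-harmonic forms. The only points needing care are the bookkeeping in the curvature step above — verifying that restricting to $k$-forms with $k\leq p$ really does let one replace $\mathfrak{X}(\lfloor\frac{n}{2}\rfloor,\kappa,D,\La)$ by $\mathfrak{X}(p,\kappa,D,\La)$ — and the remark that $\mathcal{D}_{\theta}\a=0$ for a $\De_{\theta}$-harmonic form $\a$ needs nothing beyond compactness of $X$.
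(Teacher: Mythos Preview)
Your proposal is correct and matches the paper's approach exactly: the paper's proof is the single line ``The proof follows from Theorem \ref{T8} and Lemma \ref{L7},'' together with the sentence preceding the corollary that $\mathcal{H}^{k}_{\theta}(X)\subset\ker\mathcal{D}_{\theta}$. Your extra care in checking that for a \emph{pure} $k$-form with $k\leq p$ the curvature bound from Corollary \ref{C4} already holds under $X\in\mathfrak{X}(p,\kappa,D,\La)$ (rather than the stronger $\mathfrak{X}(\lfloor\frac{n}{2}\rfloor,\kappa,D,\La)$ assumed in Theorem \ref{T8} and Lemma \ref{L7}) is a genuine clarification that the paper's one-line proof leaves implicit.
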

\begin{proof}
The proof follows from Theorem \ref{T8} and Lemma \ref{L7}.
\end{proof}
Using the integral formula of $\De_{\theta}$ harmonic forms which established in Theorem \ref{T3}, we then have
\begin{lemma}(cf. \cite[Lemma 5.5]{Che})\label{L6}
Let $(X,g)$ be a compact $n$-dimensional smooth Riemannian manifold in  $\mathfrak{X}(p,\kappa,D,\La)$. If Ricci curvature of $X$ satisfies $Ric(g)\geq -a^{2}\geq -(n-1)\kappa$, then for each $\De_{t\theta}$-harmonic $k$-form $\a$, $k\leq p$, we have
		\begin{equation}\label{E14}
			\int_{X}|f-\bar{f}||\a|^{2}\leq 2C_{4}(n)\frac{aD}{b C(b)}\sqrt{B(\frac{\sigma_{n}Da}{bC(b)})} \exp(2NC_{3}(n)R(b)\sqrt{\la})
			\frac{\int_{X}|\theta^{\sharp}|^{2}}{{\rm{Vol}}(g)}\int_{X}|\a|^{2},
		\end{equation}
		where $b=\frac{aD}{\sqrt{n-1}}$, $f=|\theta^{\sharp}|^{2}$, $\bar{f}=\frac{\int_{X}|\theta^{\sharp}|^{2} }{{\rm{Vol}}(g)}$, $\la:=2C_{2}^{2}(n)\max t^{2}|\theta^{\sharp}|^{2}-c(n)\kappa$, and
		\begin{equation}\label{E15}
			\int_{X}t^{2}|\theta^{\sharp}|^{2}|\a|^{2}\leq C_{1}(n)a\exp(2NC_{3}(n)R(b)\sqrt{\la})(\frac{\int_{X}t^{2}|\theta^{\sharp}|^{2}}{{\rm{Vol}}(g)})^{\frac{1}{2}}
			(\int_{X}|\a|^{2}).
		\end{equation}	
	\end{lemma}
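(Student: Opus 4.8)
The plan is to deduce both estimates directly from the results already established for elements of $\ker\mathcal{D}_{\theta}\cap\Om^{\pm}(X)$, applied to the closed $1$-form $t\theta$. First, since $\theta$ is harmonic so is $t\theta$; in particular $d(t\theta)=0$, so every $\De_{t\theta}$-harmonic form lies in $\ker\mathcal{D}_{t\theta}\cap\Om^{\pm}(X)$, with associated vector field $(t\theta)^{\sharp}=t\theta^{\sharp}$. Moreover the hypothesis $Ric(g)\geq-a^{2}$ yields $Ric(g)D^{2}(g)\geq-a^{2}D^{2}=-(n-1)b^{2}$ with $b=aD/\sqrt{n-1}$, so the Poincar\'{e} and Sobolev inequalities of Theorem \ref{T1}, and hence the conclusions of Theorem \ref{T8}, Lemma \ref{L7} and Corollary \ref{C6}, all remain valid with $R(b)=D/(bC(b))$ in place of $R(\La)$; this is the only point where the hypothesis $Ric(g)\geq-(n-1)\kappa$, i.e. $b\leq\La$, is invoked, to obtain the sharper constant.

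Next I apply Corollary \ref{C6} with $t\theta$ playing the role of the harmonic $1$-form there. Since $|(t\theta)^{\sharp}|^{2}=t^{2}|\theta^{\sharp}|^{2}$, so that the mean-oscillation term scales as $t^{2}(f-\bar f)$ and $\|(t\theta)^{\sharp}\|_{L^{\infty}}\|\na(t\theta^{\sharp})\|_{L^{2}}=t^{2}\|\theta^{\sharp}\|_{L^{\infty}}\|\na\theta^{\sharp}\|_{L^{2}}$, the estimates (\ref{E5})--(\ref{E6}) become, for each $\De_{t\theta}$-harmonic $k$-form $\a$ with $k\leq p$ and $\la=2C_{2}^{2}(n)\max t^{2}|\theta^{\sharp}|^{2}-c(n)\kappa$ and $N$ the integer with $2\nu^{N-1}<4\leq2\nu^{N}$,
\begin{equation*}
t^{2}\int_{X}|f-\bar{f}||\a|^{2}\leq 2C_{4}(n)R(b)\Big(\frac{\int_{X}|\a|^{2}}{{\rm{Vol}}(g)}\Big)\exp(2NC_{3}(n)R(b)\sqrt{\la})({\rm{Vol}}(g))^{\frac{1}{2}}\,t^{2}\|\theta^{\sharp}\|_{L^{\infty}(X)}\|\na \theta^{\sharp}\|_{L^{2}(X)},
\end{equation*}
together with the companion bound for $\int_{X}t^{2}|\theta^{\sharp}|^{2}|\a|^{2}$ coming from (\ref{E6}), where $f=|\theta^{\sharp}|^{2}$ and $\bar f=\frac{1}{{\rm{Vol}}(g)}\int_{X}|\theta^{\sharp}|^{2}$. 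To finish I control the two vector-field norms: Proposition \ref{L4} gives $\|\theta^{\sharp}\|_{L^{\infty}(X)}^{2}=\max_{X}|\theta^{\sharp}|^{2}\leq B(\frac{\sigma_{n}Da}{bC(b)})\frac{1}{{\rm{Vol}}(g)}\int_{X}|\theta^{\sharp}|^{2}$, and integrating the Bochner identity $-\frac12\De_{d}|\theta^{\sharp}|^{2}=|\na\theta^{\sharp}|^{2}+Ric(\theta^{\sharp},\theta^{\sharp})$ over $X$ together with $Ric(g)\geq-a^{2}$ gives $\|\na\theta^{\sharp}\|_{L^{2}(X)}^{2}=\int_{X}|\na\theta^{\sharp}|^{2}\leq a^{2}\int_{X}|\theta^{\sharp}|^{2}$.

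Substituting these, the factor $t^{2}$ cancels (for $t\neq0$; the case $t=0$ is trivial since then $f\equiv0$), and after collecting the powers of ${\rm{Vol}}(g)$ and using $aR(b)=aD/(bC(b))$ one obtains exactly (\ref{E14}). Applying the same substitution to the bound for $\int_{X}t^{2}|\theta^{\sharp}|^{2}|\a|^{2}$, but this time retaining one factor $(\int_{X}t^{2}|\theta^{\sharp}|^{2})^{1/2}$ intact and estimating only the remaining $\|\na(t\theta^{\sharp})\|_{L^{2}}\leq a(\int_{X}t^{2}|\theta^{\sharp}|^{2})^{1/2}$, gives (\ref{E15}). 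I do not expect a genuine obstacle here: the whole argument is a repackaging of Corollary \ref{C6}, Proposition \ref{L4}, and the integrated Bochner formula, and the only delicate points are bookkeeping the powers of ${\rm{Vol}}(g)$ and tracking the $t$-dependence (which survives only through $\la$, the global factors cancelling), both routine.
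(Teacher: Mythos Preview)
Your proposal is correct and follows essentially the same route as the paper: apply the estimates (\ref{E5})--(\ref{E6}) of Corollary~\ref{C6} with $t\theta$ in place of $\theta$ and with $R(b)$ in place of $R(\La)$ (justified by Theorem~\ref{T1} and the sharper Ricci bound $Ric(g)\geq -a^{2}$), then bound $\|\theta^{\sharp}\|_{L^{\infty}}$ via Proposition~\ref{L4} and $\|\na\theta^{\sharp}\|_{L^{2}}$ via the integrated Bochner formula, and substitute. The paper's proof is exactly this combination of (\ref{E1}), (\ref{E5}), (\ref{E6}) and the integrated Bochner inequality~(\ref{E28}).
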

	\begin{proof}
	Noting that $Ric(g)D^{2}(g)\geq-a^{2}D^{2}:=-(n-1)b^{2}$, i.e., $b=\frac{aD}{\sqrt{n-1}}$. Then by Theorem \ref{T1}, we can replace the variate $\La$ of the function $R(\cdot)$ in inequalities (\ref{E5})  and (\ref{E6}) to $b$. We observe  that 
		$$-\frac{1}{2}\De_{d}|\theta^{\sharp}|^{2}\geq |\na \theta^{\sharp}|^{2}-a^{2}|\theta^{\sharp}|^{2}.$$ 
		Then
		\begin{equation}\label{E28}
			\int_{X}|\na \theta^{\sharp}|^{2}\leq a^{2}\int_{X}|\theta^{\sharp}|^{2}.
		\end{equation}
		Combining (\ref{E1}), (\ref{E5}), (\ref{E6}) and (\ref{E28}), we can get 
		\begin{equation*}
		\begin{split}
		&\quad\int_{X}|f-\bar{f}||\a|^{2}\\
		&\leq 2C_{4}(n)R(b)(\frac{\int_{X}|\a|^{2}}{{\rm{Vol}}(g)})\exp(2NC_{3}(n)R(b)\sqrt{\la})({\rm{Vol}}(g))^{\frac{1}{2 }} \|\theta^{\sharp}\|_{L^{\infty}(X)}\|\na \theta^{\sharp}\|_{L^{2}(X)}\ (by\ (\ref{E5}))\\
		&\leq 	2C_{4}(n)R(b)(\frac{\int_{X}|\a|^{2}}{{\rm{Vol}}(g)})\exp(2NC_{3}(n)R(b)\sqrt{\la})({\rm{Vol}}(g))^{\frac{1}{2 }} \sqrt{B(\frac{\sigma_{n}Da}{bC(b)})}	(\frac{\int_{X}|\theta^{\sharp}|^{2}}{{\rm{Vol}}(g)})^{\frac{1}{2}} a\|\theta^{\sharp}\|_{L^{2}(X)}(by\ (\ref{E1}),(\ref{E28}))\\
		&\leq 2C_{4}(n)\frac{aD}{b C(b)}\sqrt{B(\frac{\sigma_{n}Da}{bC(b)})} \exp(2NC_{3}(n)R(b)\sqrt{\la})
		\frac{\int_{X}|\theta^{\sharp}|^{2}}{{\rm{Vol}}(g)}\int_{X}|\a|^{2},
		\end{split}
		\end{equation*}
		and
		\begin{equation*}
		\begin{split}
		\int_{X}|\theta^{\sharp}|^{2}|\a|^{2}
		&\leq C_{1}(n)(\int_{X}|\na\theta^{\sharp}|^{2})^{\frac{1}{2}}(\frac{\int_{X}|\a|^{2}}{{\rm{Vol}}(g)})\exp(2NC_{3}(n)R(b)\sqrt{\la})({\rm{Vol}}(g))^{\frac{1}{2}}\ (by\ (\ref{E6}))\\
		&\leq  C_{1}(n)a(\int_{X}|\theta^{\sharp}|^{2})^{\frac{1}{2}}(\frac{\int_{X}|\a|^{2}}{{\rm{Vol}}(g)})\exp(2NC_{3}(n)R(b)\sqrt{\la})({\rm{Vol}}(g))^{\frac{1}{2}}(by\ (\ref{E28}))\\
		&\leq C_{1}(n)a\exp(2NC_{3}(n)R(b)\sqrt{\la})(\frac{\int_{X}|\theta^{\sharp}|^{2}}{{\rm{Vol}}(g)})^{\frac{1}{2}}
		(\int_{X}|\a|^{2}).\\
		\end{split}
		\end{equation*}	
	\end{proof}
	\begin{proof}[\textbf{Proof of Theorem \ref{T4}}]
		We can let $t\in\mathbb{R}$ such that  $$\frac{\int_{X}t^{2}|\theta^{\sharp}|^{2}}{{\rm{Vol}}(g)}=-16(n-1)C^{2}_{1}(n)\kappa.$$ Provided that 
		\begin{equation}\label{E13}
			aD\leq\min\{\sqrt{n-1}c_{1}(n), c_{2}(n)\},
		\end{equation}
		where $c_{2}(n)$ (resp. $c_{1}(n)$) is positive constant in Proposition \ref{L4} (resp. Lemma \ref{L2}), we then have
		$$t^{2}\max|\theta^{\sharp}|^{2}\leq -32(n-1)C_{1}^{2}(n)\kappa, \ (by\ (\ref{E2}))$$
		and 
		$$bC(b)\geq a_{n}\Rightarrow R(b)\leq\frac{D}{a_{n}}.$$
		Therefore, we get
	$$\la=2C_{2}^{2}(n)\max t^{2}|\theta^{\sharp}|^{2}-c(n)\kappa\leq -\kappa(64(n-1)C^{2}_{1}(n)C_{2}^{2}(n)+c(n))$$
		and
		\begin{equation}\label{E27}
		\exp(2NC_{3}(n)R(b)\sqrt{\la})\leq \exp(2a^{-1}_{n}C_{3}(n)N\La\sqrt{64(n-1)C_{1}^{2}(n)C_{2}^{2}(n)+c(n)}):=B(n,\La)
		\end{equation}
		By Lemma \ref{L6}, we have
		\begin{equation*}
			\begin{split}
				\int_{X}t^{2}|f-\bar{f}||\a|^{2}&\leq 2C_{4}(n)\frac{aD}{b C(b)} \exp(2NC_{3}(n)R(b)\sqrt{\la})\frac{\int_{X}t^{2}|\theta^{\sharp}|^{2}}{{\rm{Vol}}(g)}\int_{X}|\a|^{2}\\
				&\leq 2a^{-1}_{n}C_{4}(n)aDB(n,\La)\frac{\int_{X}t^{2}|\theta^{\sharp}|^{2}}{{\rm{Vol}}(g)}\int_{X}|\a|^{2},
			\end{split}
		\end{equation*}
		and
		\begin{equation*}
			\begin{split}
				\int_{X}t^{2}|\theta^{\sharp}|^{2}|\a|^{2}&\leq C_{1}(n)a\exp(2NC_{3}(n)R(b)\sqrt{\la})(\frac{\int_{X}t^{2}|\theta^{\sharp}|^{2}}{{\rm{Vol}}(g)})^{\frac{1}{2}}
				(\int_{X}|\a|^{2})\\
				&\leq C_{1}(n)a B(n,\La)(-16(n-1)C_{1}^{2}(n)\kappa)^{\frac{1}{2}}(\int_{X}|\a|^{2}).
			\end{split}
		\end{equation*}
		Provided that 
		$$2a^{-1}_{n}C_{4}(n)aDB(n,\La)\leq\frac{1}{4},\ i.e.,\ aD\leq\frac{a_{n}}{8C_{4}(n)B(n,\La)}$$
		and $$C_{1}(n)a B(n,\La)(-16(n-1)C_{1}^{2}(n)\kappa)^{\frac{1}{2}}\leq\frac{1}{2}(-16(n-1)C_{1}^{2}(n)\kappa),\ i.e.,\ aD\leq\frac{2\sqrt{n-1}\La}{B(n,\La)}.$$
		Therefore, we provided that
		\begin{equation}\label{E16}
		 aD\leq \min\{\sqrt{n-1}c_{1}(n), c_{2}(n) ,\frac{a_{n}}{8C_{4}(n)B(n,\La)}, \frac{2\sqrt{n-1}\La}{B(n,\La)}\},
		\end{equation} 
          then 
          $$
         \int_{X}t^{2}|f-\bar{f}||\a|^{2}\leq\frac{1}{4} \frac{\int_{X}t^{2}|\theta^{\sharp}|^{2}}{{\rm{Vol}}(g)}\|\a\|^{2}_{L^{2}(X)},$$
         and
         $$\int_{X}t^{2}|\theta^{\sharp}|^{2}|\a|^{2} 
          \leq\frac{1}{2} \frac{\int_{X}t^{2}|\theta^{\sharp}|^{2}}{{\rm{Vol}}(g)}\|\a\|^{2}_{L^{2}(X)}.$$
         We then obtain
		\begin{equation*}
			\begin{split}
				\frac{\int_{X}t^{2}|\theta^{\sharp}|^{2}}{{\rm{Vol}}(g)}\|\a\|^{2}_{L^{2}(X)}&\leq \int_{X}t^{2}|f-\bar{f}||\a|^{2}+\int_{X}t^{2}|\theta^{\sharp}|^{2}|\a|^{2}\\
				&\leq\frac{3}{4}\frac{\int_{X}t^{2}|\theta^{\sharp}|^{2}}{{\rm{Vol}}(g)}\|\a\|^{2}_{L^{2}(X)}.\\
			\end{split}
		\end{equation*}
		It implies that $\a=0$.
	\end{proof}
\begin{remark}	
If a sequence of Riemannian metrics $\{g_{i}\}_{n\in\mathbb{N}}$ on a smooth manifold $X$ satisfies
	$$Ric(g_{i})\geq-\frac{(n-1)}{i},\ and\  D(g_{i})\leq1$$
	for all $i\in\mathbb{N}$, then we say that the Riemannian manifold has almost nonnegative Ricci curvature. Here $Ric(g_{i})$ and $D(g_{i})$ stand for the Ricci curvature and diameter of $g_{i}$, respectively.
	
Suppose further that the curvature operators of the metrics $\{g_{i}\}$ are uniformly bounded from below i.e., there exists a negative constant $\kappa$ such that $$\la_{1}(X,g_{i})\geq\kappa, \forall i\in\mathbb{N},$$
where $\la_{1}(X,g_{i})$ is the smallest eigenvalue of the curvature operator. For sufficiently large $i$, the manifold $(X,g_{i})\in \mathfrak{X}(\lfloor\frac{n}{2}\rfloor,\kappa,1,\kappa)$, and the Ricci curvature of $g_{i}$ satisfies $$Ric(g_{i})D(g_{i})\geq-\frac{(n-1)}{i}\geq-C(n,\kappa).$$ 
where $C(n,\kappa)$ is a positive constant independent of $i$ that appears in Theorem \ref{T4} (see (\ref{E16})). Therefore, our conclusion generalizes Theorem 1.4 in \cite{Che}. 
	\end{remark}
	\begin{proof}[\textbf{Proof of Corollary \ref{T5}}]
	We observe that	as $\La\rightarrow0^{+}$, $B(n,\La)\rightarrow1$ (see (\ref{E27})). 
		Therefore, there exists a constant $c_{3}(n)$ such that if $\La\leq c_{3}(n)$, then
	$$B(n,\La)\leq2\Rightarrow\frac{2\sqrt{n-1}\La}{B(n,\La)}\geq\sqrt{n-1}\La,\ and\ \frac{a_{n}}{8C_{4}(n)B(n,\La)}\geq\frac{a_{n}}{16C_{4}(n)}.$$
	One can see that $aD\leq\sqrt{n-1}\La$. Therefore,  $aD\leq \frac{2\sqrt{n-1}\La}{B(n,\La)}$.  Provided  that	\begin{equation}\label{E31}
	aD\leq \min\{\sqrt{n-1}c_{1}(n), c_{2}(n) ,\sqrt{n-1}c_{3}(n)\frac{a_{n}}{16C_{4}(n)}\},\ (by \ (\ref{E16})),
	\end{equation} 
	then  there exists some $t\in\mathbb{R}$, $t\neq0$ such that $H^{k}(X,t\theta)=0$ for any $k\leq p$.
	\end{proof}
	A compact Riemannian manifold $(X,g)$ is said to be $\varepsilon$-flat if the Riemannian sectional curvature $|sec(g)|\leq K$ and the diameter $D(g)$ of $X$satisfy the inequality $KD^{2}\leq\varepsilon$.  Gromov proved that any sufficiently flat Riemannian manifold possesses a finite cover which is diffeomorphic to a nil-manifold. Furthermore, Gromov demonstrated that every nil-manifold carries an $\varepsilon$-flat metric for any $\varepsilon>0$ (see \cite{BK81,Gro,Ruh}).  A compact manifold $X$ is called almost flat if there exists a sequence of metrics $g_{i}$ on $X$ such that  $$K(g_{i})D^{2}(g_{i})\leq\frac{1}{i}.$$  
	By the Chern-Weil theory, it can be shown that the Pontryagin numbers of an oriented almost flat manifold $X$ all vanish. 
	\begin{proposition}(\cite[Proposition 3.8]{BK})\label{P2}
	For a  smooth Riemannian manifold $(X,g)$ with the sectional curvature $sec(g)$ of the metric $g$ satisfies
		$$a\leq sec(g)\leq b,$$ 
		then the eigenvalues of  its curvature operator lie in the interval 
		\begin{equation}	(a+b)-\frac{4\lfloor\frac{n}{2}\rfloor-1}{3}(b-a)\leq\la\leq(a+b)+\frac{4\lfloor\frac{n}{2}\rfloor-1}{3}(b-a).
		\end{equation}
		Moreover the last upper bound is achieved on $\C P^{n}$.
	\end{proposition}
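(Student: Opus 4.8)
The plan is to read off the two-sided eigenvalue bound from the Rayleigh quotient of the curvature operator $\mathfrak{R}$, after recentering so that the hypothesis $a\le sec(g)\le b$ becomes a bound on the entries of the recentered operator along decomposable $2$-vectors. I work in the normalisation of \cite{BK} -- the one in which the round sphere $S^{n}$ of curvature $1$ has curvature operator $2I$ -- so that $\langle\mathfrak{R}(u\wedge v),u\wedge v\rangle=2\,sec(u,v)$ for an orthonormal pair $u,v$, with $\{e_{p}\wedge e_{q}\}_{p<q}$ orthonormal in $\La^{2}T_{p}X$. Put $c=a+b$ and $S=\mathfrak{R}-cI$; since a constant multiple of $I$ is the curvature operator of a space form, $S$ corresponds to an algebraic curvature tensor and in particular satisfies the first Bianchi identity. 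The hypothesis is exactly $|\langle S(u\wedge v),u\wedge v\rangle|=|2\,sec(u,v)-(a+b)|\le b-a$ for every orthonormal pair, i.e.\ the sectional curvatures of the algebraic curvature tensor underlying $S$ are bounded by $b-a$ in absolute value; by Berger's lemma for algebraic curvature tensors this forces $|\langle S(u\wedge v),w\wedge z\rangle|\le\tfrac{4}{3}(b-a)$ for every orthonormal $4$-frame $u,v,w,z$. This last estimate -- proved by a polarisation argument that invokes the first Bianchi identity to express an arbitrary component of an algebraic curvature tensor through sectional curvatures of finitely many $2$-planes -- is the one genuinely non-trivial ingredient, and its sharp form is what makes complex projective space extremal.

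Given this, for an arbitrary unit $\w\in\La^{2}T_{p}X$ I would use the canonical normal form of a $2$-vector: there are an orthonormal basis $e_{1},\dots,e_{n}$ of $T_{p}X$ and reals $\mu_{1},\dots,\mu_{k}>0$ with $k\le\lfloor\tfrac{n}{2}\rfloor$ and $\sum_{i}\mu_{i}^{2}=1$ such that $\w=\sum_{i=1}^{k}\mu_{i}\,e_{2i-1}\wedge e_{2i}$. Because the $e_{2i-1}\wedge e_{2i}$ are orthonormal,
\begin{equation*}
\langle S\w,\w\rangle=\sum_{i}\mu_{i}^{2}S_{ii}+\sum_{i\ne j}\mu_{i}\mu_{j}S_{ij},\qquad S_{ij}:=\langle S(e_{2i-1}\wedge e_{2i}),e_{2j-1}\wedge e_{2j}\rangle .
\end{equation*}
For $i=j$ the two indices form an orthonormal pair, so $|S_{ii}|\le b-a$; for $i\ne j$ the four indices $2i-1,2i,2j-1,2j$ are distinct, so $|S_{ij}|\le\tfrac{4}{3}(b-a)$ by the Berger estimate above. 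Using $\sum_{i}\mu_{i}^{2}=1$ together with Cauchy--Schwarz in the form $\sum_{i\ne j}\mu_{i}\mu_{j}=(\sum_{i}\mu_{i})^{2}-1\le k-1$, we get
\begin{equation*}
|\langle S\w,\w\rangle|\le(b-a)+\tfrac{4}{3}(b-a)(k-1)=\tfrac{4k-1}{3}(b-a)\le\tfrac{4\lfloor\frac{n}{2}\rfloor-1}{3}(b-a).
\end{equation*}
Since $S$ is self-adjoint, its largest and smallest eigenvalues are the maximum and minimum of $\langle S\w,\w\rangle$ over unit $\w$; translating back through $S=\mathfrak{R}-(a+b)I$ gives the asserted interval for the eigenvalues of $\mathfrak{R}$.

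For the sharpness claim I would take $X=\C P^{m}$ of complex dimension $m$, so that $n=2m$ and $\lfloor\tfrac{n}{2}\rfloor=m$, with the Fubini--Study metric normalised so that $sec(g)$ fills $[1,4]$ (i.e.\ $a=1$, $b=4$), and evaluate $S=\mathfrak{R}-5I$ on $\w=m^{-1/2}\sum_{i=1}^{m}e_{2i-1}\wedge e_{2i}$, where $\{e_{2i-1},\,e_{2i}=Je_{2i-1}\}$ is a unitary frame. A direct computation with the complex-space-form curvature tensor gives $S_{ii}=2\cdot4-5=3=b-a$ (each $e_{2i-1}\wedge e_{2i}$ spans a complex line, where the sectional curvature attains the maximum $b$) and $S_{ij}=\tfrac{4}{3}(b-a)=4$ for $i\ne j$ (the equality case of Berger's lemma). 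Hence every inequality in the two displays above becomes an equality, $\langle S\w,\w\rangle=\tfrac{4m-1}{3}\cdot3=4m-1$, and, since the general bound forces $\le 4m-1$ as well, the largest eigenvalue of $\mathfrak{R}$ equals $5+(4m-1)=4m+4=(a+b)+\tfrac{4\lfloor\frac{n}{2}\rfloor-1}{3}(b-a)$, the stated upper endpoint.

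The step I expect to be the main obstacle is the Berger-type estimate $|S_{ij}|\le\tfrac{4}{3}(b-a)$ on the off-diagonal entries: this is where the first Bianchi identity is essential, and it is the only part of the argument that is not an immediate consequence of the min--max principle, the normal form of a $2$-vector and Cauchy--Schwarz. To keep the paper self-contained one would either reproduce the short proof of Berger's lemma or cite it precisely; the rest, including pinning down the normalisation relating $\mathfrak{R}$ to $sec(g)$, is routine bookkeeping.
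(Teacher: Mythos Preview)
The paper does not give its own proof of this proposition; it is quoted from Bourguignon--Karcher \cite[Proposition~3.8]{BK} and used as a black box in the proof of Corollary~\ref{C3}. Your proposal is correct and is in fact the argument of \cite{BK}: recenter by $S=\mathfrak{R}-(a+b)I$, use Berger's lemma on algebraic curvature tensors to bound the off-diagonal entries $|S_{ij}|\le\tfrac{4}{3}(b-a)$, write an arbitrary unit $2$-vector in normal form $\w=\sum_{i=1}^{k}\mu_{i}\,e_{2i-1}\wedge e_{2i}$ with $k\le\lfloor n/2\rfloor$, and bound the Rayleigh quotient via Cauchy--Schwarz; the sharpness on $\C P^{m}$ is verified by the same direct computation you give.
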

	\begin{corollary}\label{C3}
	Let $X$ be a compact $n$-dimensional smooth Riemannian $\varepsilon$-flat manifold with nonzero first de Rham cohomology group.  Then there exists a uniform positive constant $C(n)$ such that if
		$$\varepsilon\leq C(n),$$
then there exists some $t\in\mathbb{R}$, $t\neq0$ such that $H^{p}(X,t\theta)=0$ for any $p$, where $H^{p}(X,t\theta)$ is the Morse-Novikov cohomology group with respect to $t\theta$. In particular, the Euler number of $X$ satisfies $\chi(X)=0$.
	\end{corollary}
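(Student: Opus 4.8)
The plan is to reduce Corollary~\ref{C3} to Corollary~\ref{T5}: first convert $\varepsilon$-flatness of the metric into a lower bound on its curvature operator via Proposition~\ref{P2}, so that $X$ lands in a suitable class $\mathfrak{X}(\lfloor\frac n2\rfloor,\kappa,D,\La)$ with $\La$ small, and then upgrade the partial vanishing furnished by Corollary~\ref{T5} to vanishing of $H^{p}(X,t\theta)$ in \emph{all} degrees by Poincar\'e duality for Morse--Novikov cohomology.

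First I would translate the hypothesis. If $(X,g)$ is $\varepsilon$-flat, then $|\sec(g)|\le K$ with $KD^{2}(g)\le\varepsilon$, so Proposition~\ref{P2} with $a=-K$, $b=K$ shows that every eigenvalue $\lambda$ of the curvature operator of $g$ satisfies $|\lambda|\le\frac{2(4\lfloor\frac n2\rfloor-1)}{3}K$. In particular the lowest eigenvalue obeys $\lambda_{1}\ge\kappa$ for $\kappa:=-\frac{2(4\lfloor\frac n2\rfloor-1)}{3}K\le0$, whence $-\kappa D^{2}(g)\le\frac{2(4\lfloor\frac n2\rfloor-1)}{3}\varepsilon$. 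Since every eigenvalue is $\ge\lambda_{1}\ge\kappa$, the average of the lowest $n-p$ eigenvalues is $\ge\kappa$ for every $p$, so $X\in\mathfrak{X}\big(\lfloor\tfrac n2\rfloor,\kappa,D(g),\La\big)$ with $\La=\sqrt{-\kappa D^{2}(g)}\le\big(\tfrac{2(4\lfloor n/2\rfloor-1)}{3}\big)^{1/2}\varepsilon^{1/2}$. Letting $C_{0}(n)$ denote the constant of Corollary~\ref{T5} and choosing $C(n)$ so small that $\big(\tfrac{2(4\lfloor n/2\rfloor-1)}{3}\big)^{1/2}C(n)^{1/2}\le C_{0}(n)$, the assumption $\varepsilon\le C(n)$ forces $\La\le C_{0}(n)$. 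Applying Corollary~\ref{T5} with $p=\lfloor\frac n2\rfloor$ to a harmonic representative $\theta\ne0$ of a nonzero class in $H^{1}_{dR}(X)$ then yields $t\in\mathbb R$, $t\ne0$, with $H^{k}(X,t\theta)=0$ for all $k\le\lfloor\frac n2\rfloor$.

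Next I would invoke Poincar\'e duality for the elliptic complex $(\Om^{\ast}(X),d_{t\theta})$: the pairing $(\alpha,\beta)\mapsto\int_{X}\alpha\wedge\beta$ satisfies $d(\alpha\wedge\beta)=d_{t\theta}\alpha\wedge\beta+(-1)^{|\alpha|}\alpha\wedge d_{-t\theta}\beta$, hence descends to a perfect pairing $H^{k}(X,t\theta)\times H^{n-k}(X,-t\theta)\to\mathbb R$ (finite-dimensionality being part of the ellipticity statement recalled in Section~\ref{S1}), giving $H^{n-k}(X,t\theta)\cong H^{k}(X,-t\theta)$. Since the proof of Theorem~\ref{T4} constrains only $t^{2}$, the form $-t\theta$ equally satisfies the conclusion of Corollary~\ref{T5}, so $H^{k}(X,-t\theta)=0$ for $k\le\lfloor\frac n2\rfloor$, and duality then gives $H^{j}(X,t\theta)=0$ for $j\ge n-\lfloor\frac n2\rfloor=\lceil\frac n2\rceil$. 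As $\lceil\frac n2\rceil-\lfloor\frac n2\rfloor\le1$, the ranges $k\le\lfloor\frac n2\rfloor$ and $j\ge\lceil\frac n2\rceil$ together exhaust $\{0,1,\dots,n\}$, so $H^{p}(X,t\theta)=0$ for every $p$. By the index identity~(\ref{E12}), $\chi(X)=\sum_{p=0}^{n}(-1)^{p}\dim H^{p}(X,t\theta)=0$.

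I expect the only non-mechanical point to be the duality step, where one must make sure Poincar\'e duality holds for the Morse--Novikov complex and, above all, that the vanishing range $k\le\lfloor\frac n2\rfloor$ together with its dual $j\ge\lceil\frac n2\rceil$ leaves no degree uncovered. The remaining work---pushing the $\varepsilon$-flat bound through Proposition~\ref{P2} into the class $\mathfrak{X}$ and matching the resulting $\La$ against the threshold of Corollary~\ref{T5}---is a routine tracking of dimensional constants.
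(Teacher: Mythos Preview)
Your proof is correct and follows essentially the same route as the paper: convert $\varepsilon$-flatness into a curvature-operator lower bound via Proposition~\ref{P2}, land in $\mathfrak{X}(\lfloor n/2\rfloor,\kappa,D,\La)$ with $\La$ small, and invoke Corollary~\ref{T5}. The paper's own proof is terser---it jumps directly from Corollary~\ref{T5} to ``$H^{p}(X,t\theta)=0$ for any $p$'' without spelling out the Poincar\'e duality step (which it had already used in the proof of Theorem~\ref{C7}); you make that step explicit, and you are also careful about the sign in the duality $H^{k}(X,t\theta)\cong H^{n-k}(X,-t\theta)$ and why the same $t$ works for $-t\theta$ (the $t^{2}$-dependence in Theorem~\ref{T4}), which the paper glosses over.
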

	\begin{proof}
		Following Proposition \ref{P2}, the smallest eigenvalue of curvature operator satisfies
		$$\la_{1}\geq\frac{-8\lfloor \frac{n}{2}\rfloor+2 }{3}K.$$
	For $\varepsilon\leq\frac{3}{8\lfloor \frac{n}{2}\rfloor-2 } C(n)$, we have $$-\la_{1}D^{2}\leq C(n),$$ 
	where $C(n)$ is the constant in Corollary \ref{T5}. Then, according to Corollary  $\ref{T5}$, it implies that there exists some $t\in\mathbb{R}$, $t\neq0$ such that the Morse-Novikov $p$-th cohomology group $H^{p}(X,t\theta)$ with respect to $t\theta$ vanishes for any $p$.
	\end{proof}
	\begin{remark}
A Riemannian manifold $X$ is said to have almost nonnegative sectional curvature if it admits a sequence of Riemannian metrics $g_{i}$ such that
		$$sec(g_{i})\geq-\frac{1}{i},\ and,\ D(g_{i})\leq 1.$$
An almost flat manifold is a special case of almost nonnegative sectional curvature.  Using collapsing theory, Yamaguchi \cite{Yam} proved that any compact almost nonnegative sectional curvature with nonzero first de Rham cohomology group has a vanishing Euler number. Chen established the vanishing theorem for Morse-Novikov cohomology group, reaffirming this result (see \cite[Theorem 1.1]{Che}). However, our method of proving Corollary \ref{T5} is not applicable in the case of an almost nonnegative manifold, as we have no way to show that the sectional curvature of $g_{i}$ is uniformly bounded from above.
	\end{remark}

	\subsection{Killing vector field and Euler number}
A vector field $\theta^{\sharp}$ on a Riemannian manifold $(X,g)$ is called a Killing field if $L_{\theta^{\sharp}}g=0$, meaning that $\theta^{\sharp}$ leaves the metric $g$ invariant. This implies that the local flows generated by $\theta^{\sharp}$ act as isometries. The dual $1$-form of $\theta^{\sharp}$ is denoted by $\theta$. 

We can obtain the $C^{0}$-estimate for the Killing vector fields on compact manifolds in $\mathfrak{X}(p,\kappa,D,\La)$ with an upper bound on the Ricci curvature. 
	\begin{proposition}(cf. \cite{CH})
	Let  $(X,g)$ be a compact smooth Riemannian manifold , $\theta^{\sharp}$ be a Killing vector field  on $X$. If the Ricci curvature satisfies 
		$$ Ric(g)\leq a^{2} ,$$	
		then 
		\begin{equation}\label{E21}
			\|\na\theta^{\sharp}\|^{2}_{L^{2}(X)}\leq a^{2}\|\theta^{\sharp}\|^{2}_{L^{2}(X)}.
		\end{equation}
		Furthermore, if  $X\in\mathfrak{X}(p,\kappa,D,\La)$, then
		\begin{equation}\label{E20}
			\max|\theta^{\sharp}|^{2}\leq B(\sigma_{n}\frac{aD}{\La C(\La)})\frac{\int_{X}|\theta^{\sharp}|^{2} }{ {\rm{Vol}}(g)}.
		\end{equation}
	\end{proposition}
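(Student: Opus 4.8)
The plan is to prove the two assertions in sequence, since the first is essentially a Bochner-type identity for Killing fields and the second is an application of Theorem~\ref{T2}. First I would recall the classical Bochner formula for a Killing vector field: if $\theta^{\sharp}$ is Killing, then $\nabla^{\ast}\nabla\theta^{\sharp} = Ric(\theta^{\sharp})$, equivalently $\Delta_{d}\theta = 2Ric(\theta)$ on the dual $1$-form up to the usual constant, so in particular
\begin{equation*}
-\tfrac{1}{2}\Delta_{d}|\theta^{\sharp}|^{2} = |\nabla\theta^{\sharp}|^{2} - \langle \nabla^{\ast}\nabla\theta^{\sharp},\theta^{\sharp}\rangle = |\nabla\theta^{\sharp}|^{2} - Ric(\theta^{\sharp},\theta^{\sharp}).
\end{equation*}
Integrating over the closed manifold $X$ kills the left-hand side, giving $\int_{X}|\nabla\theta^{\sharp}|^{2} = \int_{X}Ric(\theta^{\sharp},\theta^{\sharp})$. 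Combined with the hypothesis $Ric(g)\leq a^{2}$, which gives $Ric(\theta^{\sharp},\theta^{\sharp})\leq a^{2}|\theta^{\sharp}|^{2}$ pointwise, this immediately yields \eqref{E21}.

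For the $C^{0}$-estimate \eqref{E20}, I would feed the pointwise Bochner identity into the Morse–Nash–De Giorgi machinery already packaged as Theorem~\ref{T2}. Starting from $-\tfrac12\Delta_{d}|\theta^{\sharp}|^{2} = |\nabla\theta^{\sharp}|^{2} - Ric(\theta^{\sharp},\theta^{\sharp}) \geq |\nabla\theta^{\sharp}|^{2} - a^{2}|\theta^{\sharp}|^{2}$ and discarding the nonnegative gradient term, one gets $-\tfrac12\Delta_{d}|\theta^{\sharp}|^{2} \geq -a^{2}|\theta^{\sharp}|^{2}$, hence by the second Kato inequality $|\nabla|\theta^{\sharp}||\leq|\nabla\theta^{\sharp}|$ (cf.\ \cite{Ber}) one obtains the subsolution inequality $|\theta^{\sharp}|\Delta_{d}|\theta^{\sharp}| \leq a^{2}|\theta^{\sharp}|^{2}$ in the distributional sense, exactly the form required by Theorem~\ref{T2} with $f = |\theta^{\sharp}|$ and $c = a^{2}$. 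Since $X\in\mathfrak{X}(p,\kappa,D,\Lambda)$, the remark after Corollary~\ref{C4} gives $Ric(g)\geq(n-1)\kappa$, so $Ric(g)D^{2}(g)\geq -(n-1)\Lambda^{2}$, i.e.\ the hypothesis of Theorem~\ref{T2} holds with $b=\Lambda$. Applying Theorem~\ref{T2} to $f=|\theta^{\sharp}|$ (so $f^{2}=|\theta^{\sharp}|^{2}$) with $c^{1/2}=a$ and $R(b)=\frac{D}{\Lambda C(\Lambda)}$ yields
\begin{equation*}
\max_{x\in X}|\theta^{\sharp}|^{2}(x) \leq B\!\left(\sigma_{n}\tfrac{aD}{\Lambda C(\Lambda)}\right)\frac{\int_{X}|\theta^{\sharp}|^{2}}{{\rm Vol}(g)},
\end{equation*}
which is \eqref{E20}.

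The only subtlety — and the step I would be most careful about — is the correct constant in the Bochner formula for Killing fields and the precise statement that a Killing field satisfies $\nabla^{\ast}\nabla\theta^{\sharp}=Ric(\theta^{\sharp})$ (as opposed to the Hodge-Laplacian identity $\Delta_{d}\theta=2Ric(\theta)$, which differs by the Weitzenböck term): one must be sure that after discarding $|\nabla\theta^{\sharp}|^{2}$ the inequality is stated for $\Delta_{d}=d^{\ast}d$ acting on the scalar function $|\theta^{\sharp}|^{2}$, matching the normalization in Theorem~\ref{T2}. This is standard (see \cite[Section 7.3]{Pet} and the analogous computation in Proposition~\ref{L4} for harmonic $1$-forms, which is formally identical once one notes that for Killing fields the sign of the curvature term in the Bochner formula is opposite to the harmonic case, so an \emph{upper} bound on $Ric$ plays the role that a \emph{lower} bound plays there). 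No new analytic input beyond Theorem~\ref{T2} and the Kato inequality is needed; the proof is a short assembly of these ingredients.
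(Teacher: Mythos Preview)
Your proposal is correct and follows essentially the same route as the paper: the Bochner identity $-\tfrac{1}{2}\Delta_{d}|\theta^{\sharp}|^{2}=|\nabla\theta^{\sharp}|^{2}-Ric(\theta^{\sharp},\theta^{\sharp})$ for Killing fields, integrated for \eqref{E21} and combined with the second Kato inequality and Theorem~\ref{T2} (with $b=\Lambda$) for \eqref{E20}. Your write-up is in fact more explicit than the paper's about why $b=\Lambda$ is the right parameter and how \eqref{E21} follows by integration, but the underlying argument is identical.
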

	\begin{proof}
		Applying  Bochner formula to $|\theta^{\sharp}|^{2}$, we get (see \cite[Proposition 1.4]{Pet})
		\begin{equation*}
			\begin{split}
				-\frac{1}{2}\De_{d}|\theta^{\sharp}|^{2}&=|\na \theta^{\sharp}|^{2}-Ric(\theta^{\sharp},\theta^{\sharp})\\
				&\geq|\na \theta^{\sharp}|^{2}-a^{2}|\theta^{\sharp}|^{2}.
			\end{split}
		\end{equation*}
		Following the second Kato inequality, we get $|\theta^{\sharp}|\De_{d}|\theta^{\sharp}|\leq a^{2}|\theta^{\sharp}|^{2}$.  Apply Theorem \ref{T2} to $|\theta^{\sharp}|^{2}$, we obtain the estimate (\ref{E20}).
	\end{proof}

	\begin{lemma}\label{L3}
		Let $(X,g)$ be a compact $n$-dimensional smooth Riemannian manifold in  $\mathfrak{X}(\lfloor\frac{n}{2}\rfloor,\kappa,D,\La)$, $\theta^{\sharp}$ a Killing vector field on $X$. If Ricci curvature $Ric(g)\leq a^{2}$, then for each $\a\in\ker\mathcal{D}_{t\theta}\cap\Om^{\pm}(X)$, we have
		\begin{equation}\label{E18}
			\int_{X}|f-\bar{f}||\a|^{2}\leq 2C_{4}(n)\frac{aD}{\La C(\La)}\sqrt{B(\frac{\sigma_{n}Da}{\La C(\La)})} \exp(2NC_{3}(n)R(\La)\sqrt{\la})
			\frac{\int_{X}|\theta^{\sharp}|^{2}}{{\rm{Vol}}(g)}\int_{X}|\a|^{2},
		\end{equation}
		where $f=|\theta^{\sharp}|^{2}$, $\bar{f}=\frac{\int_{X}|\theta^{\sharp}|^{2} }{{\rm{Vol}}(g)}$, $\la=2C_{2}^{2}(n)\max t^{2}|\theta^{\sharp}|^{2}-c(n)\kappa$, and
		\begin{equation}\label{E19}
			\int_{X}t^{2}|\theta^{\sharp}|^{2}|\a|^{2}\leq C_{1}(n)a\exp(2NC_{3}(n)R(\La)\sqrt{\la})(\frac{\int_{X}t^{2}|\theta^{\sharp}|^{2}}{{\rm{Vol}}(g)})^{\frac{1}{2}}
			(\int_{X}|\a|^{2}).
		\end{equation}	
	\end{lemma}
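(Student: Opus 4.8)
The argument parallels that of Lemma \ref{L6}, the only change being that the Bochner identity for harmonic $1$-forms is replaced by the one for Killing fields. First I would record that, since $X\in\mathfrak{X}(\lfloor\frac{n}{2}\rfloor,\kappa,D,\La)$, the Ricci curvature satisfies $Ric(g)\geq(n-1)\kappa$, so that $Ric(g)D^{2}(g)\geq-(n-1)\La^{2}$ and Theorem \ref{T1} applies with $b=\La$, giving $R(\La)=\frac{D}{\La C(\La)}$. Next, for every $t\in\mathbb{R}$ the field $t\theta^{\sharp}$ is again Killing, and the Weitzenb\"ock formula $-\frac12\De_{d}|t\theta^{\sharp}|^{2}=|\na(t\theta^{\sharp})|^{2}-Ric(t\theta^{\sharp},t\theta^{\sharp})$ together with the hypothesis $Ric(g)\le a^{2}$ yields, after integration, $\int_{X}|\na(t\theta^{\sharp})|^{2}\le a^{2}\int_{X}|t\theta^{\sharp}|^{2}$; combined with the second Kato inequality and Theorem \ref{T2} it also yields the $C^{0}$-bound (\ref{E20}), namely $\max|t\theta^{\sharp}|^{2}\le B\big(\sigma_{n}\tfrac{aD}{\La C(\La)}\big)\tfrac{\int_{X}|t\theta^{\sharp}|^{2}}{{\rm{Vol}}(g)}$.

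With these two ingredients in hand I would invoke Lemma \ref{L7}, applied to the smooth vector field $t\theta^{\sharp}$ and to $\a\in\ker\mathcal{D}_{t\theta}\cap\Om^{\pm}(X)$. Inequality (\ref{E25}) bounds $\int_{X}|g-\bar g|\,|\a|^{2}$, with $g=|t\theta^{\sharp}|^{2}$, by a constant multiple of $R(\La)\big(\tfrac{\int_{X}|\a|^{2}}{{\rm{Vol}}(g)}\big)\exp(2NC_{3}(n)R(\La)\sqrt{\la})({\rm{Vol}}(g))^{1/2}\|t\theta^{\sharp}\|_{L^{\infty}}\|\na(t\theta^{\sharp})\|_{L^{2}}$; substituting the $L^{2}$-gradient bound and the $C^{0}$-bound turns the product $\|t\theta^{\sharp}\|_{L^{\infty}}\|\na(t\theta^{\sharp})\|_{L^{2}}$ into $a\sqrt{B(\sigma_{n}\tfrac{aD}{\La C(\La)})}\,({\rm{Vol}}(g))^{-1/2}\int_{X}|t\theta^{\sharp}|^{2}$. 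Recalling $R(\La)=\tfrac{D}{\La C(\La)}$, and dividing through by $t^{2}$ (so that $g=t^{2}f$ and $\int_{X}|t\theta^{\sharp}|^{2}=t^{2}\int_{X}|\theta^{\sharp}|^{2}$), gives exactly (\ref{E18}). The estimate (\ref{E19}) comes out of (\ref{E26}) in the same way: the gradient factor $(\int_{X}|\na(t\theta^{\sharp})|^{2})^{1/2}$ is replaced by $a(\int_{X}|t\theta^{\sharp}|^{2})^{1/2}$, and a rearrangement of the volume factors produces the stated inequality.

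Since all the analytic estimates are already in place, I expect no genuine obstacle; the work is essentially bookkeeping. The two points deserving attention are: (i) the sign of the curvature term in the Weitzenb\"ock formula for Killing fields is opposite to that in the harmonic case, so it is the \emph{upper} Ricci bound $Ric(g)\le a^{2}$, not a lower one, that delivers $\int_{X}|\na(t\theta^{\sharp})|^{2}\le a^{2}\int_{X}|t\theta^{\sharp}|^{2}$; and (ii) one must consistently carry the rescaled field $t\theta^{\sharp}$ through, so that the quantity $\la=2C_{2}^{2}(n)\max|t\theta^{\sharp}|^{2}-c(n)\kappa$ matches the one appearing in the statement. Keeping track of the powers of $t$ and of ${\rm{Vol}}(g)$ across the substitutions is the most error-prone part, but it is routine.
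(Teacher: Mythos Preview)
Your proposal is correct and follows exactly the route the paper intends: the paper's own proof is the single line ``The proof is similar to Lemma \ref{L6},'' and you have supplied precisely the modifications that similarity demands---replacing the harmonic Bochner formula by the Killing one (so the \emph{upper} Ricci bound $Ric(g)\le a^{2}$ yields (\ref{E21}) and (\ref{E20})), keeping $R(\La)$ rather than $R(b)$ since the Ricci lower bound now comes from membership in $\mathfrak{X}(\lfloor\frac{n}{2}\rfloor,\kappa,D,\La)$, and invoking Lemma \ref{L7} for the field $t\theta^{\sharp}$. The bookkeeping with the powers of $t$ and ${\rm{Vol}}(g)$ that you flag is indeed the only place to be careful, and your outline handles it correctly.
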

	\begin{proof}
		The proof is similar to Lemma \ref{L6}.
	\end{proof}
	\begin{proof}[\textbf{Proof of Theorem \ref{T6}}]
		We let $t\in\mathbb{R}$ such that $$\frac{\int_{X}t^{2}|\theta^{\sharp}|^{2}}{{\rm{Vol}}(g)}=-\kappa.$$ 
	Provided that 
	$$B(\frac{\sigma_{n}Da}{\La C(\La)})\leq2,\ i.e., aD\leq \ln2\sigma^{-1}_{n}\La C(\La)(\sqrt{\nu}-2)\sqrt{\nu}^{-1}, $$
	we then have
	$$t^{2}\max|\theta^{\sharp}|^{2}\leq -2\kappa,\ (by\ (\ref{E20}))$$
	and
	$$\la=2C_{2}^{2}(n)\max t^{2}|\theta^{\sharp}|^{2}-c(n)\kappa\leq -\kappa(4C_{2}^{2}(n)+c(n)).$$
	Therefore, we get
	$$\exp(2NC_{3}(n)R(\La)\sqrt{\la})\leq \exp(2C_{3}(n)N\sqrt{4C_{2}^{2}(n)+c(n)}C^{-1}(\La)):=\tilde{B}(n,\La)$$
	Provided that $$aD\leq\frac{\La C(\La)}{6\sqrt{2}C(n)\tilde{B}(n,\La)}
	\Rightarrow2C(n)\frac{aD}{\La C(\La)}\sqrt{B(\frac{\sigma_{n}Da}{\La C(\La)})}\tilde{B}(n,\La)\leq\frac{1}{3},$$
	and $$C_{n}a\tilde{B}(n,\La)(-\kappa)^{\frac{1}{2}}\leq-\frac{1}{3}\kappa,\ i.e.,\ aD\leq\frac{\La}{3C_{n}\tilde{B}(n,\La)}.$$
	Therefore, we provided that
	$$aD\leq\{\ln2\sigma^{-1}_{n}\La C(\La)(\sqrt{\nu}-2)\sqrt{\nu}^{-1}, \frac{\La C(\La)}{6\sqrt{2}C(n)\tilde{B}(n,\La)},\frac{\La}{3C_{n}\tilde{B}(n,\La)} \},
	$$
	then
	\begin{equation*}
	\begin{split}
	\frac{\int_{X}t^{2}|\theta^{\sharp}|^{2}}{{\rm{Vol}}(g)}\|\a\|^{2}_{L^{2}(X)}&\leq \int_{X}t^{2}|f-\bar{f}||\a|^{2}+\int_{X}t^{2}|\theta^{\sharp}|^{2}|\a|^{2}\\
	&\leq\frac{2}{3}\frac{\int_{X}t^{2}|\theta^{\sharp}|^{2}}{{\rm{Vol}}(g)}\|\a\|^{2}_{L^{2}(X)}.\\
	\end{split}
	\end{equation*}
	It implies that $\a=0$.
	
	\end{proof}
	
	\begin{proof}[\textbf{Proof of Corollary \ref{T7}}]
	For both cases, the manifold $X\in\mathfrak{X}(\lfloor\frac{n}{2}\rfloor,\kappa,D,\La)$, where $\kappa$ can be any negative constant. Noting that $\liminf_{\La\rightarrow0^{+}}\La C(\La)\geq a_{n}$. We now let $\kappa\rightarrow0^{-}$ i.e., $\La\rightarrow 0^{+}$, in Lemma (\ref{L3}), then for any $\a\in\ker\mathcal{D}_{t\theta}$, we have
			\begin{equation}\label{E29}
		\int_{X}|f-\bar{f}||\a|^{2}\leq 2C_{4}(n)\frac{aD}{a_{n}}\sqrt{B(\frac{\sigma_{n}Da}{a_{n}})} \exp(2NC_{3}(n)\frac{D\sqrt{\la}}{a_{n}})
		\frac{\int_{X}|\theta^{\sharp}|^{2}}{{\rm{Vol}}(g)}\int_{X}|\a|^{2},
		\end{equation}
		where $f=|\theta^{\sharp}|^{2}$, $\bar{f}=\frac{\int_{X}|\theta^{\sharp}|^{2} }{{\rm{Vol}}(g)}$, and
		\begin{equation}\label{E30}
		\int_{X}t^{2}|\theta^{\sharp}|^{2}|\a|^{2}\leq C_{1}(n)a\exp(2NC_{3}(n)\frac{D\sqrt{\la}}{a_{n}})(\frac{\int_{X}t^{2}|\theta^{\sharp}|^{2}}{{\rm{Vol}}(g)})^{\frac{1}{2}}
		(\int_{X}|\a|^{2}).
		\end{equation}	
		where $\la:=2C_{2}^{2}(n)\max t^{2}|\theta^{\sharp}|^{2}$. We can choose $t\neq 0$ such that 
		$$\frac{\int_{X}t^{2}|\theta^{\sharp}|^{2}}{{\rm{Vol}}(g)}=36C_{1}^{2}(n)a^{2},$$
		There exist a positive constant $c_{4}(n)$ such that if $aD\leq c_{4}(n)$, then
		$$\sqrt{B(\frac{\sigma_{n}Da}{a_{n}})}\leq2$$
		and
		$$\exp(2NC_{3}(n)Da_{n}^{-1}\sqrt{\la})\leq2,$$
		then by  (\ref{E30}), we get 
		\begin{equation*}
			\begin{split}
				\int_{X}t^{2}|\theta^{\sharp}|^{2}|\a|^{2}
				&\leq C_{1}(n)a\exp(2NC_{3}(n)Da_{n}^{-1}\sqrt{\la})(\frac{\int_{X}t^{2}|\theta^{\sharp}|^{2}}{{\rm{Vol}}(g)})^{\frac{1}{2}}(\int_{X}|\a|^{2})\\
				&\leq 2C_{1}(n) a\|\a\|^{2}_{L^{2}(X)}
				\sqrt{ \frac{\int_{X}t^{2}|\theta^{\sharp}|^{2}}{{\rm{Vol}}(g)}}
				(\int_{X}|\a|^{2})\\
				&\leq 12C^{2}_{1}(n)a^{2}\|\a\|_{L^{2}(X)}\\
				&=\frac{1}{3}\frac{\int_{X}t^{2}|\theta^{\sharp}|^{2}}{{\rm{Vol}}(g)}\|\a\|^{2}_{L^{2}(X)}.\\
			\end{split}
		\end{equation*}
		Provided that 
		$aD\leq\min\{c_{4}(n),\frac{a_{n}}{24C_{4}(n)}\}$, then by (\ref{E18}),
		\begin{equation*}
			\begin{split}
				\int_{X}t^{2}|f-\bar{f}||\a|^{2}&\leq 2C_{4}(n)\frac{aD}{a_{n}}\sqrt{B(\frac{\sigma_{n}Da}{a_{n}})} \exp(2NC_{3}(n)\frac{D\sqrt{\la}}{a_{n}})
				\frac{\int_{X}t^{2}|\theta^{\sharp}|^{2}}{{\rm{Vol}}(g)}\int_{X}|\a|^{2}\\
				&\leq 8C_{4}(n)a^{-1}_{n}aD\frac{\int_{X}t^{2}|\theta^{\sharp}|^{2}}{{\rm{Vol}}(g)}\int_{X}|\a|^{2}\\
				&\leq\frac{1}{3}\frac{\int_{X}t^{2}|\theta^{\sharp}|^{2}}{{\rm{Vol}}(g)}\|\a\|^{2}_{L^{2}(X)}.\\
			\end{split}
		\end{equation*}
		Combining preceding inequalities yields
		\begin{equation*}
			\frac{\int_{X}t^{2}|\theta^{\sharp}|^{2}}{{\rm{Vol}}(g)}\|\a\|^{2}_{L^{2}(X)}\leq\frac{2}{3}\frac{\int_{X}t^{2}|\theta^{\sharp}|^{2}}{{\rm{Vol}}(g)}\|\a\|^{2}_{L^{2}(X)}.
		\end{equation*}
	Assume that the Killing filed is non-zero, then $\a=0$, it implies that $\chi(X)=0$. However, according to the \cite[Theorem A] {PW}and \cite[Page 36, Corollary]{PW},  the Euler number $\chi(X)=2$ under the conditions in Corollary \ref{T7}, hence we have $\theta^{\sharp}=0$.
	\end{proof}

  \section*{Acknowledgements}
	This work is supported by the National Natural Science Foundation of China Nos. 12271496 (Huang), 11701226 (Tan) and the Youth Innovation Promotion Association CAS, the Fundamental Research Funds of the Central Universities, the USTC Research Funds of the Double First-Class Initiative.
	
\noindent\textbf{Data availability} {This manuscript has no associated data.}
	\section*{Declarations}
\noindent\textbf{Conflict of interest} The author states that there is no conflict of interest.

	\bigskip
	\footnotesize
	
\end{document}